\documentclass{amsart}

\usepackage{verbatim}
\usepackage{amssymb}
\usepackage{amsbsy}
\usepackage{amscd}
\usepackage{amsmath}
\usepackage{amsthm}
\usepackage{amsxtra}
\usepackage[mathscr]{eucal}


\renewcommand{\Pr}{Pr}
\newcommand{\afft}{\mathfrak{t}}
\newcommand{\cE}{{}^{\vee}E}
\newcommand{\sfD}{\mathsf{D}}

\newcommand{\affQ}[1]{\mathbf{Q}_{#1}}
\newcommand{\finn}{\bar{\mathfrak{n}}}
\newcommand{\vecW}[1]{\mathsf{W}^{(#1)}}
\newcommand{\isomap}{\overset{\sim}{\rightarrow} }
\newcommand{\finW}{\bar{W}}

\newcommand{\Dyn}{\mathrm{Dyn}}
\newcommand{\W}{\mathscr{W}}
\newcommand{\BRST}{H^{\mathrm{BRST}}}

\newcommand{\nc}{\newcommand}
\nc{\Hp}[1]{H^{#1}}
\newcommand{\V}{\mathcal{V}}

\newcommand{\affQp}{Q_{(0)}}
\renewcommand{\r}{\mathfrak{r}}

\newcommand{\bCg}{\mathcal{C}^{\bullet}}

\newcommand{\Dg}{D}
\newcommand{\tri}{\triangle}
\newcommand{\affP}{P_{0,k}^+}
\newcommand{\affBGG}{\BGG_{0,k}}
\newcommand{\Vkg}{V^k(\fing)}

\newcommand{\prroots}{\Delta_+^{\rm re}}

\newcommand{\affn}{\mathfrak{n}}
\newcommand{\sW}{\bar{W}}

\newcommand{\rroots}{\Delta^{\rm re}}
\newcommand{\iroots}{\Delta^{\rm im}}
\newcommand{\h}{\mathfrak{h}}

\newcommand{\affh}{\mathfrak{h}}
\newcommand{\snn}{\sn_-}
\newcommand{\snp}{\sn_+}
\newcommand{\affg}{\mathfrak{g}}
\newcommand{\fing}{\bar{\mathfrak{g}}}
\newcommand{\sQ}{\bar{\mathbf{Q}}_-}

\newcommand{\sCl}{\bar{\mathscr{C}l}}
\newcommand{\Cl}{\mathscr{C}l}
\newcommand{\bCl}{\mathscr{C}l}
\newcommand{\affCl}{\mathscr{C}l}
\newcommand{\sL}{\bar{L}}
\newcommand{\slam}{\bar{\lam}}
\newcommand{\sP}{\bar{P}}
\newcommand{\sM}{\bar{M}}
\newcommand{\fdW}{\mathfrak{Fin}(\Wfin)}
\newcommand{\sH}{H^{\mathrm{Lie}}}
\newcommand{\Wfin}{\W^{\mathrm{fin}}(\sg,f)}

\newcommand{\sh}{\bar{\mathfrak{h}}}
\newcommand{\Wg}{\W^k(\sg, f)}
\newcommand{\schi}{\bar{\chi}}
\newcommand{\sI}{\bar{I}}
\newcommand{\sroots}{\bar{\roots}}
\newcommand{\wJ}{\widehat{J}}
\newcommand{\bh}{\mathfrak{h}}
\newcommand{\sn}{\bar{\mathfrak{n}}}
\newcommand{\sE}{\bar E}
\newcommand{\tp}{{\mathrm{top}}}
\newcommand{\st}{\mathrm{st}}
\newcommand{\Lamsemi}[1]{\bigwedge\nolimits^{\frac{\infty}{2}+#1}}
\newcommand{\semiLamp}[1]{\bigwedge\nolimits^{\frac{\infty}{2}+#1}(L\sg_{>0})}

\newcommand{\sBGG}{\BGG_{0}(\sg)}
\renewcommand{\L}[1]{L #1}

\newcommand{\gVerma}[1]{M_0(#1)}

\newcommand{\Zh}{\mathrm{Z}\mathrm{h}}

\newcommand{\sg}{\bar{\mathfrak{g}}}
\newcommand{\BGG}{{\mathcal O}}

\newcommand{\aff}{\mathrm{aff}}

\newcommand{\N}{\mathbb{N}}

\newcommand{\1}{{\mathbf{1}}}
\newcommand{\teigi}{=} 

\newcommand{\dual}[1]{{#1}^*}
\newcommand{\bra}{{\langle}}
\newcommand{\ket}{{\rangle}}

\newcommand{\roots}{\Delta}
\newcommand{\nno}{\nonumber}
\newcommand{\Lam}{\Lambda}

\newcommand{\affPp}{P_{0,+}^k}
\newcommand{\lam}{\lambda}
\newcommand{\ra}{\rightarrow}
\newcommand{\+}{\mathop{\oplus}}
\newcommand{\Z}{\mathbb{Z}}
\newcommand{\Mod}{\text{-}\mathfrak{mod}}
\newcommand{\adMod}{\text{-}\mathfrak{Mod}}

\newcommand{\inv}{^{-1}}
\newcommand{\bg}{\mathfrak{g}}
\renewcommand{\*}{{\otimes}}
\newcommand{\C}{\mathbb{C}}
\newcommand{\che}{^{\vee}}
\newcommand{\fin}{\mathrm{fin}}

\newtheorem{Th}{Theorem}[subsection]
\newtheorem{Pro}[Th]{Proposition}
\newtheorem{Lem}[Th]{Lemma}
\newtheorem{Def}[Th]{Definition}
\newtheorem{Co}[Th]{Corollary}

\newtheorem{Rem}[Th]{Remark}
\numberwithin{defn}{section}
\DeclareMathOperator{\Aut}{Aut}
\DeclareMathOperator{\im}{im}
\DeclareMathOperator{\rank}{rank}
\DeclareMathOperator{\ch}{ch}
\DeclareMathOperator{\Res}{Res}

\DeclareMathOperator{\id}{id}
\DeclareMathOperator{\Lie}{Lie}

\DeclareMathOperator{\End}{End}
\DeclareMathOperator{\gr}{gr}
\DeclareMathOperator{\Hom}{Hom}
\DeclareMathOperator{\Dim}{Dim}

\DeclareMathOperator{\ad}{ad}


\title[Representation Theory  of  $W$-Algebras, II]
{Representation Theory of $W$-Algebras, II:\\
Ramond Twisted Representations}


\author[Tomoyuki Arakawa]{Tomoyuki Arakawa}

\thanks{The author is partially supported 
by the JSPS Grant-in-Aid for Young Scientists (B)
No.\ 17740006}
\dedicatory{Dedicated to Professor Akihiro Tsuchiya on the occasion of his 
retirement from Nagoya University
}


\address{Department of Mathematics, 
Nara Women's University, Nara 630-8506, JAPAN}


\email{arakawa@cc.nara-wu.ac.jp}



\subjclass[2000]{}


\keywords{W-algebras}

\begin{document}
\begin{abstract}
We study 
the Ramond twisted representations of 
the affine $W$-algebra $\W^k(\sg,f)$
in the case that $f$ admits a good even grading.
We establish the 
vanishing 
and the almost irreducibility
of the corresponding 
BRST cohomology.
This confirms some of the recent conjectures of
Kac and Wakimoto \cite{KacWak}.
In type $A$,
our  results
give  the characters of
all  
 irreducible ordinary  
Ramond twisted representations of $\W^k(\mathfrak{sl}_n,f)$
for all nilpotent elements $f$
and
 all  non-critical  $k$,
and prove the existence of modular invariant representations
 conjectured in  \cite{KacWak}.

\end{abstract}
\maketitle

\section{Introduction}
Let $\fing$ be a 
complex simple Lie algebra,
$f$ a nilpotent element of $\sg$,
$\affg$ the non-twisted affine Kac-Moody 
 Lie algebra
associated with $\sg$.
Let $\Wg$ be the 
{\em affine $W$-algebra}
associated with $(\sg, f)$
at level $k\in \C$,
  defined by 
the 
 method of the quantum
BRST
reduction \cite{FF90,BoeTji94,KacRoaWak03}.

The vertex algebra $\Wg$ is
in general  $\frac{1}{2}\Z_{\geq 0}$-graded \cite{KacWak04}.
Therefore 
it is natural \cite{KacWak}  to consider
its {\em{Ramond twisted representations}}\footnote{
If $f$ is an even nilpotent element 
then $\Wg$ is $\Z_{\geq 0}$-graded and Ramond twisted
representations are usual (untwisted) representations.}. 
In fact 
it is in the Ramond twisted representations
where
the corresponding {\em finite $W$-algebra}
$\Wfin$ \cite{Lyn79,BoeTji93,Pre02}
appears as its {\em Zhu algebra},
according to \cite{De-Kac06}.

In the previous paper \cite{Ara07} we studied the
representations  of
$\Wg$
in the case that $f$ is a principal nilpotent element.
In the present paper we study the Ramond twisted representations
of 
$\Wg$ in the case that $f$ admits a good even grading.
All nilpotent elements in type $A$ 
satisfy this condition.

There is a 
natural
BRST (co)homology
functor $\BRST_{0}(?)$
from a suitable category of representations of 
$\affg$ at level $k$
to the category of 
Ramond twisted representations of $\Wg$.
In our case  $\BRST_{\bullet}(M)$ is essentially the same 
BRST cohomology
studied  
in the recent work \cite{KacWak} of Kac and Wakimoto.
In the case that $f$ is principal this functor is
identical to the ``$-$''-reduction functor studied in 
\cite{FKW92,Ara04,Ara07}.

The main
result
 of 
this paper is the 
vanishing 
and the {\em almost irreduciblity}
of the BRST cohomology (Theorem \ref{Th:Main}).
Though our formulation is slightly different 
from that of  \cite{KacWak},
this result proves
Conjecture B of \cite{KacWak},
partially.
Here,
recall \cite{De-Kac06} that a positive energy   representation
$M=\bigoplus_{d\in d_0+\Z_{\geq 0}}M_0$,
$M_{d_0}\ne 0$, of a vertex algebra $V$ is  called
{almost irreducible}
if $M$ is generated by $M_{d_0}$ and there is no graded submodule
of $M$ intersecting $M_{d_0}$ trivially.
In particular 
an almost irreducible module $M$ is irreducible if and only if
its ``top part'' $M_{d_0}$ is irreducible over the Zhu algebra of $V$.

In our case 
the top part of the 
BRST cohomology functor is 
identical to the 
Lie algebra homology functor
(the Whittaker functor \cite{Mat90,BruKle05})
from the highest weight category of $\fing$
to the category of $\Wfin$-modules
(see \S \ref{subsection:top-part-of-BRST}).
Therefore our result reduces the study of
the BRST cohomology functor to that 
of the Whittaker functor in the representations theory
of finite $W$-algebras.

Although 
the representation theory of
finite $W$-algebras  
has been rapidly developing
(cf.\ \cite{Pre07,Pre06,Los07,BruGooKle08}),
not much is known 
 about
the Whittaker functor
associated with $\Wfin$ 
except for some special cases \cite{Mat90},
unless
 $\fing=\mathfrak{sl}_n$:
In type $A$,
Brundan and Kleshchev
\cite{BruKle05}
determined the characters of all  irreducible
finite-dimensional representations of $\W^{\fin}(\mathfrak{sl}_n,f)$,
by
showing that
the Whittaker functor sends an simple module to zero or a simple module,
and any simple $\W^{\fin}(\mathfrak{sl}_n,f)$-module
is obtained in this manner.
It follows that
in type $A$
the almost irreduciblity
of
the BRST cohomology
actually implies the irreduciblity,
and furthermore,
any
irreducible  ordinary\footnote{An
irreducible positive  energy 
representation of a vertex algebra is called ordinary
if its all homogeneous subspaces
 are finite-dimensional. } 
representation
of $\W^k(\mathfrak{sl}_n,f)$
 is 
isomorphic to $\BRST_0(L(\lam))$
for some irreducible highest weight representation
 $L(\lam)$ of $\widehat{\mathfrak{sl}}_n$ with highest weight $\lam$
 (Theorem \ref{Th:Main:typeA}).
Hence our result shows that
 the character of {\em every}
irreducible ordinary 
Ramond twisted representation 
of $\W^k(\mathfrak{sl}_n,f)$
at any level $k\in \C$ 
is  determined 
by that of the corresponding irreducible highest weight
representation of $\affg$,
which is known \cite{KasTan00} (in terms of the Kazhdan-Lusztig polynomials)
provided that $k$ is not critical.
This generalizes
 the main results of \cite{Ara05,Ara07}.

\smallskip 
The most important representations of a
vertex algebra  are those
irreducible 
ordinary
representations
whose normalized characters 
are modular invariant.
Kac and Wakimoto \cite{KacWak}
have
recently  discovered
the remarkable triples $(\fing,f,k)$,
for which 
the (nonzero) normalized
Euler-Poincar\'{e} characters of the BRST cohomology
$\BRST_{\bullet}(L(\lam))$,
with 
the coefficient in the
irreducible 
principal admissible   representations
$L(\lam)$
of $\affg$ at level $k$,
are homomorphic functions
on the complex upper half plane
and span an $SL_2(\Z)$-invariant space\footnote{
In the case that $f$ is a principal nilpotent
the existence of
modular invariant representation of $\Wg$
was conjectured by
Frenkel, Kac and Wakimoto \cite{FKW92}
and proved in \cite{Ara07}.}.
Our 
results show
in type $A$
that these 
Euler-Poincar\'{e} characters 
are indeed  characters of irreducible Ramond twisted representations
of $\W^k(\mathfrak{sl}_n,f)$,
as conjectured in \cite{KacWak}
(see Theorem \ref{Th:moruldar-invariant-represenrations})\footnote{It seems 
that
the ``top parts'' of  
 modular invariant representations
are in general  ``generic'' representations
of $\Wfin$,
 see Theorem \ref{Th;irr-generic}.
}.

\smallskip
Non-twisted representations of affine $W$-algebras
are studied in our
 subsequent paper.

\subsection*{Acknowledgment}
The author is grateful to 
Professor M. Wakimoto for explaining his joint work \cite{KacWak}
with
Professor V. Kac,
and to
Professor A.  Premet for 
very useful discussion on finite $W$-algebras.
He is grateful to Professor Simon Goodwin who pointed out a mistake
in the first version of the paper.
The results of the paper were reported in part
at TMS \& AMS Joint International Conference
in Taichung in December 2005,
in ``Representation Theory of Algebraic Groups and Quantum Groups 06"
in Nagoya in June  2006
and in ``Exploration of New Structures and Natural Constructions
in Mathematical Physics'' in Nagoya in March 2007.

\bigskip

\noindent {\em Notation.}
Throughout this paper the ground field is the complex number $\C$
and tensor products and dimensions are always meant to be as vector spaces 
over $\C$.

\section{Preliminaries on Vertex Algebras and 
their Twisted Representations}
In this section 
we collect the  necessary information
on vertex algebras and their
(twisted) representations.
The textbook \cite{Kac98,FreBen04}
and the papers \cite{Li96-2,BakKac04,De-Kac06}
are our basic references in this section.
\subsection{Fields}\label{subsection:fields}
Let $V$ be a vector space.
For a formal series
$a(z)\in (\End V)[[z,z\inv]]$,
we set $a_{(n)}=\Res_z z^n a(z)$,
where $\Res_z$ denotes the coefficient of $z\inv$.

An element $a(z)=\sum_{n\in\Z}a_{(n)}z^{-n-1}\in (\End V)[[z,z\inv]]$
is called
a  {\em field} on $V$ if $a_{(n)}v=0$ for all $v\in V$ and $n\gg 0$.

The normally ordered product
\begin{align}
 :a(z)b(z):=a(z)_-b(z)+b(z)a(z)_+
\end{align}
of two fields $a(z)$ and $b(z)$ is also a field,
where $a(z)_-=\sum_{n<0}a_{(n)}z^{-n-1}$ and
 $a(z)_+=\sum_{n\geq 0}a_{(n)}z^{-n-1}$.

Two fields $a(z)$ and $b(z)$  are called {\em mutually 
 local} if 
\begin{align}
 (z-w)^r [a(z), b(w)]=0\quad \text{for }r\gg 0
\label{eq:locality}
\end{align}
in $(\End V)[[z,z\inv,w,w\inv]]$.

Set 
\begin{align}
 \delta(z-w)=\sum_{n\in \Z}z^n w^{-n-1} \in \C[[z,z\inv,w,w\inv]].
\end{align}
The locality 
 (\ref{eq:locality})
gives
\begin{align}
 [a(z),b(w)]=\sum_{n\geq 0}(a(z)_{(n)}b(w))\partial_w^{[n]}
\delta(z-w),
\end{align}
where
 $\partial_{w}^{[n]}=\partial_w^n /n!$,
$\partial_w=\frac{\partial}{\partial w}$,
and
\begin{align*}
 a(z)_{(n)}b(w)=\Res_{z}(z-w)^n [a(z),b(w)].
\end{align*}

\subsection{Vertex Algebras}
A {\em vertex algebra} is a vector space
$V$ equipped with the 
following data:
\begin{itemize}
 \item A vector $\1\in V$ (vacuum vector),
\item $T\in \End V$ (translation operator),
\item A collection $\{a^{\alpha}(z)
=\sum_{n\in \Z}
a^{\alpha}_{(n)}z^{-n-1}; \alpha\in A\}$ of fields on $V$,
where $A$ is an index set
(generating fields),
\end{itemize}
These data are subject  to the following:
\begin{enumerate}
 \item $T \1=0$,
\item $[T, a^{\alpha}(z)]=\partial_z a^{\alpha}(z)$ for all $\alpha
\in A$,
\item $a^{\alpha}(z)\1\in V[[z]]$
for all $\alpha
\in A$,
\item 
the vectors
$a^{\alpha_1}_{(m_1)}\dots a^{\alpha_r}_{(m_r)}\1$
with $r\geq 0$,
$\alpha_i\in A$
and $m_i\in \Z$
span $V$,
\item for any $\alpha, \beta\in A$
the fields $a^{\alpha}(z)$
and $a^{\beta}(z)$ mutually local.
\end{enumerate}

\smallskip

Let $V$ be a vertex algebra.
There exists a unique 
linear map 
\begin{align}
\label{eq:state-field}
 V\rightarrow (\End V)[[z,z\inv]],
\quad a \mapsto Y(a,z)=\sum_{n\in \Z}a_{(n)}z^{-n-1}
\end{align}
such that
\begin{enumerate}
 \item $Y(a,z)$ is a field on $V$ for any $a\in V$,
\item $Y(a,z)$ and $Y(b,z)$ are mutually
local for any $a,b\in V$,
\item $[T, Y(a,z)]=\partial_z a(z)$
for any $a\in V$,
\item $Y(a,z)\1\in V[[z]]$
and $\lim\limits_{z\ra 0}Y(a,z)\1=a$
 for any $a\in V$,
\item $Y(a^{\alpha}_{(-1)}\1,z)=a^{\alpha}(z)$
for any generating filed $a^{\alpha}(z)$.
\end{enumerate}
The map $Y(?,z)$
is called the  {\em state-field correspondence}.

\smallskip

A {\em Hamiltonian}
of a vertex algebra $V$
is a diagonalizable operator  
$H\in \End V$
such that
\begin{align*}
 [H,Y(a,z)]=Y(Ha,z)+z\partial_z Y(a,z)
\quad \text{for all }a\in V.
\end{align*}
A vertex algebra with a Hamiltonian
$H$ is called  graded.
If $a$ is a eigenvector of $H$
its eigenvalue  is called the {\em conformal weight}
of $a$ and denoted by $\Delta_a$.
Let\footnote{This differs from the notation in \cite{Ara07}.}\begin{align*}
V_{\Delta}=\{a\in V;
H a=\Delta a\},
   \end{align*}
so that
$V=\bigoplus_{\Delta\in \C}V_{\Delta}$.
\subsection{Twisted Representations of Vertex  Algebras}
\label{subsection:twisted-representations}
Let $N\in \N$.
An {\em $N$-twisted field}
 $a(z)$ on a  vector space $M$
is a formal power series  in $z^{1/N}$,
$z^{-1/N}$
of the form 
\begin{align}
 a(z)=\sum_{n\in \frac{1}{N}\Z}a_{(n)}z^{-n-1},
\quad a_{(n)}\in \End (M)
\end{align}
such that $a_{(n)}m=0$
for all $m\in M$ and $n\gg 0$.

{Two $N$-twisted fields $a(z)$ and $b(z)$ on $M$
are called mutually local if they satisfy (\ref{eq:locality})
in $(\End M)[[z^{1/N},z^{-1/N},w^{1/N},w^{-1/N}]]$.

Let $V$ be a vertex algebra,
$\sigma$ an automorphism of $V$
of order $N$.
A
{\em  $\sigma$-twisted representation}
 of
$V$
is a vector space $M$
equipped with a linear map
from $V$ to the space of $N$-twisted fields on $M$,
\begin{align*}
V \ra (\End M)[[z^{\frac{1}{N}},z^{-\frac{1}{N}}]],
\quad a\mapsto Y^M(a,z)=\sum_{n
\in \frac{1}{N}\Z}a_{(n)}^Mz^{-n-1},
\end{align*}
such that
\begin{align}
&Y^M(\sigma a,z)=Y^M(a, e^{2 \pi i}z),\\
&Y^M(\1,z)=\id_M,
\end{align}
and
\begin{align}
 \sum_{i=0}^{\infty}
&\begin{pmatrix}
 m\\i
\end{pmatrix}
(a_{(r+i)}b)^M_{(m+n-i)}
\label{eq:Borcherds-id}\\
&=\sum_{i=0}^{\infty}
(-1)^i\begin{pmatrix}
 r\\i
      \end{pmatrix}
\left(a^{M}_{(m+r-i)}b^M_{(n+i)}
-(-1)^r b^M_{(n+r-i)}a_{(m+i)}^M\right)
\nno
\end{align}
for $a\in V_{\bar j}$,
$b\in V$,
$m\in \frac{j}{N}+\Z$,
$n\in \frac{1}{N}\Z$,
$r\in \Z$,
where
\begin{align}
V_{\bar {j}}=\{\sigma(a)=(e^{\frac{2\pi \sqrt{-1}}{N}})^{-j}a\}.
\end{align}
The relation
(\ref{eq:Borcherds-id})
is called the
{\em  twisted Borcherds identity}.

By setting
$r=0$ in (\ref{eq:Borcherds-id}),
one obtains 
\begin{align}
 [a^M_{(m)}, b^M_{(n)}]=
\sum_{i=0}^{\infty}
\begin{pmatrix}
 m\\i
\end{pmatrix}
(a_{(i)}b)^M_{(m+n-i)},
\label{eq:commutator-formula}
\end{align}
or equivalently,
\begin{align}
 [Y^M(a,z), Y^M(b,w)]=\sum_{i=0}^{\infty}
Y^M(a_{(i)}b,w)\partial_{w}^{[i]}\delta_j(z-w)
\label{eq:commutator-formula-2}
\end{align}
for $a\in V_{\bar j}$,
where \begin{align*}
\delta_j(z-w)=z^{-j/N}w^{j/N}\delta(z-w)=
\sum_{n\in j/N+ \Z}w^{n}z^{-n-1}.
      \end{align*}
In particular
$Y^M(a,z)$ and $Y^M(b,z)$ 
are mutually local.

The relation (\ref{eq:commutator-formula})
gives \cite{Li96-2}
\begin{align}
\label{eq:2007-12-25-2}
&Y^M(a_{(n)}b, w)\\
&=
\Res_z \sum_{k=0}^{\infty}
\begin{pmatrix}
 -j/N\\ k
\end{pmatrix}
z^{j/N-k}w^{-j/N}
(z-w)^{n+k}[Y^M(a,z),Y^M(b,w)]\nno
\end{align}
for all $n\geq 0$.
The sum in (\ref{eq:2007-12-25-2})
is finite because of the locality.
(In reality 
(\ref{eq:2007-12-25-2})
holds for all $n\in \Z$
in an appropriate sense,
see \cite{Li96-2}).

Set $b=\1$, $r=-2$, $n=0$ 
in (\ref{eq:Borcherds-id}).
It follows that 
\begin{align}
 Y^M(T a,z)=\partial_z Y^M(a,z).
\end{align}

\smallskip

Suppose that $V$ is graded by a Hamiltonian $H$.
A $\sigma$-twisted representation $M$
is called {\em graded}
if there exists an diagonalizable operator $H^M$
on $M$ such that
\begin{align}
 [H^M, a_{(n)}^M]=(T a)^M_{(n+1)}+ (H a)_{(n)}
\label{eq:hamiltonian-aciton}
\end{align} 
for all $a\in V$ and $n\in \frac{1}{N}\Z$.
If $a$ is homogeneous,
\eqref{eq:hamiltonian-aciton} is equivalent to
\begin{align}
 [H^M, a^M_{(n)}]=-(n-\Delta_a+1)a^M_{(n)}.
\label{eq:2008-01-13-14-04}
\end{align}
We set
\begin{align}
 M_d=\{m\in M; H^M m=d m\}
\end{align}
for $d\in \C$.

A {\em positive energy $\sigma$-twisted 
representation}\footnote{A positive energy
representations is also
called an admissible representation
in the literature.}
of $V$
is a  graded $\sigma$-twisted representation $M$
of $V$ such that
there exist a finite set $d_1,\dots,d_r\in \C$
such that
$M_d=0$ unless
$d\in \bigcup_{i}d_i+\Z_{\geq 0}$.
Let $V\adMod_{\sigma}$
be the category of positive energy 
$\sigma$-twisted representations of $V$,
whose morphisms are graded homomorphisms of
$\sigma$-twisted representations. 

An {\em ordinary}
$\sigma$-twisted representation
of $V$ is 
a 
positive energy $\sigma$-twisted representation
of $V$ such that $\dim M_d<\infty$
for all $d$.
Let $V\Mod_{\sigma}$ be the full subcategory of 
$\V\adMod_{\sigma}$
consisting of ordinary $\sigma$-twisted representations.

When $\sigma=\id_V$,
$\sigma$-twisted representations are 
just usual (non-twisted) representations.
We set $V\adMod=V\adMod_{\id_V}$ and $V\Mod=V\Mod_{\id_V}$.

\subsection{$H$-Twisted Zhu Algebras}\label{subsection:Zhu-algbera}
Let $V$ be a vertex algebra graded by a Hamiltonian $H$.
Assume that 
$V_{\Delta}\ne 0$ unless $\Delta\in \frac{1}{N}\Z$.
Then $\sigma_H:=e^{2 \pi i\ad H}:V\ra V$ is an automorphism
of order at most $N$.

If $M$ is a graded $\sigma_H$-twisted representations of 
$V$ then 
the number 
$n-\Delta_a+1$ in  (\ref{eq:2008-01-13-14-04})
is always an integer.
Set $a_{n}^M=a_{(n+\Delta_a-1)}^M$,
so that
\begin{align}
 Y^M(a,z)=\sum_{n\in \Z}a_{n}^M z^{-n-\Delta_a},
\quad [H^M, a_n^M]=-n a_n^M.
\end{align}

 Define the $H$-twisted Zhu algebra  \cite{Zhu96,De-Kac06}
$\Zh_H V$ by
\begin{align}
\Zh_H V= V/V\circ V,
\end{align}
where $V\circ V$ is  the 
span of the vectors \begin{align*}
		     a\circ b:=\sum_{r\geq 0}
\begin{pmatrix}
 \Delta_a\\r
\end{pmatrix}a_{(r-2)}b
		    \end{align*}
with homogeneous  vectors  $a,b\in V$.
The $\Zh_H V$ is an associative algebra
with the multiplication 
\begin{align*}
 a* b=\sum_{r\geq 0}\begin{pmatrix}
		     \Delta_a\\r
		    \end{pmatrix}
a_{(r-1)}b.
\end{align*}

Let $M$ be an object of 
$V \adMod_{\sigma_H}$.
Denote by $V_{\tp}$ the sum of
homogeneous subspace $V_d$ such that
$V_{d'}=0$ for all $d'\in d-\N$.
Then $V_{\tp}$ is naturally a 
module over $\Zh_H V$
by the following action:
\begin{align}
 (a+ V\circ V) m=a^M_{(\Delta_a-1)}m
=a^M_0 m.
\end{align}

\begin{Th}[\cite{Zhu96,De-Kac06}]
The map $M\mapsto M_{\tp}$ gives a bijective correspondence
between simple objects of $V \adMod_{\sigma_H}$
 and irreducible $\Zh_{H}V$-modules.
\end{Th}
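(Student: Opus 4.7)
The plan is to prove the bijection by constructing an inverse to $M \mapsto M_{\tp}$, adapting the arguments of Zhu in the untwisted setting and of De Sole--Kac in the twisted setting. The two directions split naturally: the forward direction (that $M_{\tp}$ is simple when $M$ is) uses the graded structure and the generation of a positive-energy module by its top, while the converse requires a universal construction of a $\sigma_H$-twisted module with prescribed top part.

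For the forward direction, I would first verify that $M_{\tp}$ is a $\Zh_H V$-module. Applying the twisted Borcherds identity to the zero modes $a_0^M = a_{(\Delta_a - 1)}^M$ with appropriately chosen parameters, combined with the Hamiltonian relation that $a_n^M$ with $n > 0$ annihilates $M_{\tp}$, shows both that $V \circ V$ acts as zero on $M_{\tp}$ and that composition of zero modes reproduces the Zhu product $*$. Simplicity of $M_{\tp}$ over $\Zh_H V$ then follows from simplicity of $M$: given a nonzero $\Zh_H V$-submodule $U \subset M_{\tp}$, the $V$-submodule $\langle U \rangle \subset M$ generated by $U$ is graded, and a direct mode analysis yields $\langle U \rangle_{\tp} = U$ (modes with $n > \Delta_a - 1$ strictly decrease the grading, modes with $n = \Delta_a - 1$ act on $U$ through $\Zh_H V$, and repeated lowering modes cannot return to top degree), forcing $\langle U \rangle = M$ and $U = M_{\tp}$. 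The same generation argument shows that any $\Zh_H V$-isomorphism on tops extends uniquely to a $V$-module isomorphism, proving injectivity on isomorphism classes.

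For surjectivity, given an irreducible $\Zh_H V$-module $U$, construct a universal positive-energy $\sigma_H$-twisted $V$-module $M(U)$ with top part $U$: take the free module on symbols $a_{(n)}^M u$ for $a \in V_{\bar j}$ homogeneous, $n \in \frac{j}{N} + \Z$, $u \in U$, modulo the twisted Borcherds identity together with the conditions that $a_0^M$ acts on $U$ via the given $\Zh_H V$-structure and strictly positive modes annihilate $U$. The unique maximal graded submodule $N \subset M(U)$ with $N \cap U = 0$ yields the desired simple quotient $L(U) = M(U)/N$. The main obstacle, as in Zhu's original proof, is showing $M(U)_{\tp} = U$ rather than a strictly larger space: this amounts to a PBW-type statement for the mode algebra modulo the Borcherds relations. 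The strategy is to filter by conformal weight and use the reconstruction formula expressing $(a_{(n)}b)^M$ as a product of modes of $a$ and $b$ to reduce inductively to the defining $\Zh_H V$-action on $U$. The twisted setting adds the complication of tracking both the $\frac{1}{N}\Z$-grading on modes and the $\sigma_H$-eigenspace decomposition $V = \bigoplus_j V_{\bar j}$, but the argument proceeds analogously to the untwisted case.
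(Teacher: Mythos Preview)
The paper does not give its own proof of this theorem: it is stated with a citation to \cite{Zhu96,De-Kac06} and then used as a black box. So there is nothing to compare your proposal against in the paper itself.

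That said, your sketch is essentially the standard argument from those references, and as a sketch it is basically sound. A couple of small remarks. First, your injectivity argument (``any $\Zh_H V$-isomorphism on tops extends uniquely to a $V$-module isomorphism'') is not quite how it is usually done: one does not directly extend the map, but rather observes that both simple modules with isomorphic top $U$ are quotients of the same universal object $M(U)$, and hence both coincide with its unique simple quotient $L(U)$. The direct extension you describe would require additional work. Second, for a simple positive energy module $M$ you are implicitly using that $M_{\tp}$ is concentrated in a single degree $d_0$; this follows from simplicity (if $M_d\ne 0$ and $M_{d'}\ne 0$ with $d-d'\notin\Z$, the submodule generated by $M_d$ misses $M_{d'}$), but is worth stating. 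Finally, you are right that the genuine content lies in showing $M(U)_{\tp}=U$, and your indicated approach via filtration by conformal weight is the correct one; in the $H$-twisted setting this is carried out in detail in \cite{De-Kac06}.
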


The $M$ is said to be {\em almost highest weight}
if 
(1)
$M_{\tp}=M_d$  for some $d$
and
(2)
$M$ is generated by $M_{\tp}$ over $V$.
The $M$ is said be {\em almost co-highest weight}
if 
(1) $M_{\tp}=M_d$  for some $d$ and
(2) $M$ contains  no graded submodule 
intersecting $M_{\tp}$ trivially.
The $M$ is called {\em almost irreducible}
\cite{De-Kac06}
if $M$ is both almost highest weight and 
almost co-highest weight.
Clearly, an almost irreducible module is simple if and only if
$M_{\tp}$ is irreducible over $\Zh_H V$.

\section{Affine $W$-algebras}
\subsection{The Setting}\label{subsection:sl2-triple}
Let $\fing$ be a complex reductive Lie algebra,
$f$ a nilpotent element of $\fing$.
The corresponding  affine $W$-algebra  $\W^k(\fing,f)$
at the level $k\in \C$
is defined by the method of  the quantum BRST reduction.
This method was discovered by Feigin and Frenkel
\cite{FF90} who used it to define 
the $W$-algebra $\Wg$ associated with the 
principal nilpotent elements
$f$.
The most general definition of $\Wg$ was given
by Kac, Roan and Wakimoto
\cite{KacRoaWak03},
and 
the definition 
of $\W^k(\fing,f)$ given in \cite{KacRoaWak03,KacWak04}
involves  another data,
namely a {\em good grading }
of $\fing$ for $f$.
However,
thanks to the results
\cite{BruGoo07}
of  Brundan and Goodwin,
the definition
of $\W^k(\fing,f)$
 does {\em not}
depend on the choice of a good grading for $f$.

Throughout this  paper 
{\em we assume that
$f$ admits a good even grading
}unless otherwise stated,
that is,
there exists 
a $\Z$-grading  
\begin{align}
\sg=\bigoplus_{j\in \Z}\sg_j
\label{eq:good-grading}
\end{align}
of $\sg$
such that $\mathfrak{z}(\fing)\subset \fing_0$,  $f\in \sg_{-1}$, and
$\ad f: \sg_{\leq 0}\ra \sg_{< 0}$ is surjective,
where
 $\sg_{\leq  0}=\bigoplus_{j\leq 0}\sg_j$
and $\sg_{< 0}=\bigoplus_{j<0}\sg_j$. 
The last condition is equivalent to that
$\ad  f: \sg_{>0}\ra \sg_{\geq 0}$ is injective,
where 
$\sg_{\geq 0}=\bigoplus_{j\geq 0}\sg_j$
and $\sg_{>0}=\bigoplus_{j>0}\sg_j$.
By definition
there exists an exact sequence
\begin{align}
0\ra \sg^f\hookrightarrow \sg_{\leq  0}\overset{\ad f}{\ra} \sg_{<0}\ra
 0,
\label{eq:exact-gf}
\end{align}
where $\sg^f$ is the centralizer of $f$ 
in $\sg$.

One can find   
a
$\mathfrak{sl}_2$-triple 
$(e,h,f)$  in $\sg$
such that $e\in \fing_1$,
$h\in \fing_0$, see Lemma 1.1 of \cite{ElaKac05}.
Below {\em we write $h_0$ for $h$}.
Also,
there exists a semisimple element $x_0\in \sg_0$
that defines the $\Z$-grading, i.e.,
\begin{align}
\sg_j=\{a\in \sg;[x_0,a]=j a\}.
\label{eq:grading-g}
\end{align}

We fix a non-degenerate invariant inner product $(~|~)$
on $\fing$
such that $(e|f)=1$.
Set
\begin{align}
\schi=\schi_f=(f|?)\in \fing^*,
\label{eq:character-finite}
\end{align}
and let
$\mathbb{O}_{\schi}$ be the coadjoint orbit of $\schi$,
\begin{align}
 d_{\schi}=\frac{1}{2}\dim \mathbb{O}_{\schi}.
\end{align}
By (\ref{eq:exact-gf})
one has
\begin{align}\label{eq:d=half-of-dim-of-orbit}
\dim \sg_{< 0}=\frac{1}{2}(\dim \sg-\dim \sg^f)=d_{\schi}.
\end{align}

\subsection{Root Data}\label{subsection:Root-Data}
Let 
$\sh$ be a Cartan subalgebra of $\sg_0$
containing
$x_0$ and $h_0$ (see above).
Then $\sh$ is a Cartan subalgebra of $\sg$.
Let $\sroots$ be the set
of roots of $\sg$.
One has 
\begin{align*}
\sroots=\sqcup_{j\in \Z}\sroots_j,
\end{align*}
where $\sroots_j=\{\alpha\in \sroots;
\bra \alpha, x_0\ket=j\}$.
The $\sroots_0$ is the set of roots of the reductive
subalgebra $\sg_0$. 
Let
 $\sroots_{0,+}$ be a set of positive 
roots of $\sg_0$,
$\sroots_{0,-}=-\sroots_{0,+}$.
Then $\sroots_+=\sroots_{0,+}\sqcup \sroots_{>0}$
is a set of positive roots of $\sg$,
where $\sroots_{>0}=\sqcup_{j>0}\sroots_j$.
Likewise,
$\sroots_-=\sroots_{0,-}\sqcup \sroots_{<0}$
is a set of negative roots of $\sg$,
where 
$\sroots_{<0}=\sqcup_{j<0}\sroots_j$.
Let
$\sg_0=\sn_{0,-}\+ \sh\+ \sn_{0,+}$
and 
$\sg=\sn_-\+ \sh\+ \sn_+$
be the corresponding
 triangular decompositions of $\sg_0$ and $\sg$,
respectively.

Let $\bar \rho$ be the half sum of positive roots of $\fing$.

Let $\bar Q$,
$\bar Q\che$,
$\sP$
and $\sP\che$
be the root lattice,
the coroot lattice,
the weight lattice and the coweight lattice 
of $\fing$,
respectively.
Denote by $\sW$ the Weyl group of $\fing$.

We fix an anti-Lie algebra involution
$\sg\ni x\mapsto  x^t\in \sg$
such that 
$e^t=f$ and $h^t=h$ (for all $h\in \sh$). 

Set $\sI=\{1,\dots, \rank \sg\}$.
Let $\{J_a; a\in \sI\sqcup \sroots_{\pm}\}$
be a basis of $\fing$
such that $J_{\alpha}$ with $\alpha\in \sroots$
is a root vector of root $\alpha$
and $\{J_i;i \in \sI\}$ is a basis of $\sh$.
Denote by $c_{a,b}^d$ the corresponding structure constant.

\subsection{Kac-Moody Lie Algebras}
\label{subsection:Kac-Moody}

Let $\bg$ be the 
Kac-Moody affinization of
 $\sg$:
\begin{align}
 \bg=\sg[t,t\inv]\+ \C K\+ \C D,
\end{align}
where $\sg[t,t\inv]=\sg\* \C[t,t\inv]$.
The commutation
relations
are given by
\begin{align}
 [X(m),Y(n)]=[X,Y](m+n)+m\delta_{m+n,0}(X|Y)K,\\
[D,X(m)]=m X(m),\quad [K,\bg]=0
\end{align}
for $X,Y\in \affg$, $m,n\in \Z$,
where
$X(m)=X\* t^n$.
The subalgebra $\fing\* \C\subset \affg$
is naturally identified with $\sg$.

The 
form $(~|~)$
is 
extended from $\sg$ to 
the invariant symmetric bilinear  of $\affg$
as follows:
\begin{align*}
 (X(m)|Y(n))=(X|Y)\delta_{m+n,0},
\quad (D| K)=1,\\
(X(m)|D)=(X(m)|K)=(D|D)=(K|K)=0.
\end{align*}

We fix the 
triangular decomposition
$\affg=\affn_-\+\affh\+\affn_+$
as usual:
 \begin{align}
&\h=\sh\+\C K\+\C D,\label{eq:Cartan-affine}\\
&\affn_-=\snn[t\inv]\+ \sh[t\inv]t\inv
\+ \snp[t\inv]t\inv,\\
&\text{$\affn_+=\sn_-[t]t\+ \sh[t]t
\+ \snp[t]$.}
\end{align}

Let
$
\dual{\h}=\dual{\sh}\+\C \Lam_0\+\C \delta
$
be the  dual of $\h$.
Here,
$\Lam_0$ and
$\delta$ are dual elements of $K$ and $D$,
respectively. For $\lam\in\dual{\h}$,
the number
$\bra \lam,K\ket$ is called the
{\em level of $\lam$}.

Let
$\bar{\lam}$ be the restriction of $\lam\in \dual{\h}$ to $\dual{\sh}$.
We refer to $\slam$ as the {\em classical part}
of $\lam$.

\smallskip

Let $\roots$ be the set of roots of $\affg$,
$\roots_+$ the set of positive roots,
$\roots_-=-\roots_+$.
Then,
$\roots=\rroots\sqcup \iroots$,
where
$\rroots$
is
the set of real roots
and
$\iroots$ is the set of imaginary roots.
Let
$\rroots_{\pm}=\rroots\cap \roots_{\pm}$
and
$\iroots_{\pm}=\iroots\cap \roots_{\pm}$.
One has
\begin{align*}
 \rroots_+=\{ \alpha+n\delta;  \alpha\in \sroots_+,\ n\geq 0\}\sqcup
\{ -\alpha+n\delta;  \alpha\in \sroots_+,\ n\geq 1\}.
\end{align*}
Let $Q$ be the root lattice,
$Q_+=\sum_{\alpha\in \roots_+
}\Z_{\geq 0} \alpha\subset
Q$.
We define a partial 
ordering $\mu\leq  \lam$  on $\dual{\bh}$
by
$\lam-\mu\in Q_+$.

\smallskip 
Let
$W\subset GL(\dual{\h})$ be the Weyl group of $\affg$
generated by the reflections $s_{\alpha}$
with $\alpha\in \rroots$,
where
$s_{\alpha}(\lam)=\lam-\bra \lam,\alpha\che\ket\alpha$
for $\lam\in \dual{\bh}$.
The dot action of
$W$
on $\dual{\h}$ is defined by $w\circ \lam= w(\lam+\rho)-\rho$,
where
$\rho=\bar{\rho}+h\che
\Lam_0\in\dual{\h}$
and $h\che$ is the dual Coxeter number of $\fing$.

For $\lam\in \dual{\bh}$,
let
\begin{align}
& \roots(\lam)= \{\alpha\in \rroots; 
\bra \lam+\rho,\alpha\che\ket\in \Z\},\\
&\roots_+(\lam)= \roots(\lam)\cap \roots_+,\\
&W(\lam)=\bra s_{\alpha};  \alpha\in \roots(\lam)\ket
\subset W.
\end{align}
One knows that $W(\lam)$ is a Coxeter subgroup of $W$,
and $W(\lam)$ is called the {\em integral Weyl group} of $\lam\in \dual{\bh}$.
Let
$\ell_{\lam}:W(\lam)\ra \Z_{\geq 0}$
be its length function.

\smallskip

For an $\bh$-module 
$M$ let $M^{\lam}$ 
be the weight space of weight $\lam\in \dual{\bh}$:
\begin{align*}
M^{\lam}=\{m\in M; a m=\lam(a)m~ \forall a \in \bh\}.
\end{align*}
We say $M$ admits a weight space 
decomposition if 
$M=\bigoplus_{\lam}M^{\lam}$
and $\dim M^{\lam}<\infty$ for all $\lam$.
In this case
we define the graded dual $M^*$
of $M$ by
\begin{align}\label{eq:graded-dual}
M^*=\bigoplus_{\lam}\Hom_{\C}(M^{\lam},\C)
\subset \Hom_{\C}(M,\C).
\end{align}
Also, we set\footnote{This differs from the notation in \cite{Ara07}. }
\begin{align}
& M_d=\{m\in M; -D m=d m\},\\
&\sfD(M)=\bigoplus_{d\in \C}\Hom_{\C}(M_d,\C)
\end{align}
for a semisimple $\C D$-module $M$.
Note that
if $M$ is a $\affg$-module
then $M_d$ is a $\fing$-submodule of $M$ for any $d$.

\begin{Lem}\label{Lem:easy-lemma}
 Let $M$ be a $\bh$-module that admits a weight space decomposition.
Suppose that $M_d$ is finite-dimensional for all $d$.
Then $\sfD(M)=M^*$.
\end{Lem}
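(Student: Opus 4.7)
The plan is to exploit the fact that $D$ lies in $\bh$, so every $\bh$-weight space $M^{\lam}$ is automatically a $D$-eigenspace with eigenvalue $\lam(D)$; this gives the refinement
\begin{align*}
M_d = \bigoplus_{\lam\in\dual{\bh},\ \lam(D) = -d} M^{\lam}
\end{align*}
of the $D$-eigenspace decomposition by $\bh$-weight spaces. Both $M^*$ and $\sfD(M)$ will then be identified as subspaces of $\Hom_{\C}(M,\C)$ by extending functionals by zero outside the relevant grading piece, and the lemma will follow from comparing the two decompositions.

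First I would prove $M^*\subset \sfD(M)$. Given $\lam\in\dual{\bh}$ and $f\in \Hom_{\C}(M^{\lam},\C)$, extend $f$ by zero on all other weight spaces. Since $M^{\lam}\subset M_{-\lam(D)}$, this extended functional vanishes on every $M_{d}$ with $d\ne -\lam(D)$, so it lies in $\Hom_{\C}(M_{-\lam(D)},\C)$, and hence in $\sfD(M)$. Summing over $\lam$ gives the inclusion.

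Next I would prove the reverse inclusion $\sfD(M)\subset M^*$, and this is the step that uses the hypothesis $\dim M_d<\infty$. Take $f\in \Hom_{\C}(M_d,\C)$ (extended by zero off $M_d$). Because $M_d$ is the direct sum of the weight spaces $M^{\lam}$ with $\lam(D)=-d$, and this direct sum is finite-dimensional, only finitely many such $M^{\lam}$ are nonzero. Hence
\begin{align*}
\Hom_{\C}(M_d,\C) \;=\; \bigoplus_{\lam(D)=-d} \Hom_{\C}(M^{\lam},\C),
\end{align*}
which is visibly contained in $M^*$. Summing over $d\in\C$ yields $\sfD(M)\subset M^*$.

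There is no real obstacle here; the only point requiring care is the use of the finite-dimensionality of $M_d$ in the second inclusion, without which $\Hom_{\C}(M_d,\C)$ could contain functionals that are not supported on finitely many $\bh$-weight spaces and hence would not lie in the algebraic direct sum $M^*$.
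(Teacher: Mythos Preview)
Your proof is correct. The paper does not supply a proof of this lemma at all (note the label \texttt{Lem:easy-lemma}); it is stated and immediately followed by the next subsection, so there is nothing to compare your argument against beyond the fact that the author considered it routine. Your identification of the key point---that $D\in\bh$ forces each weight space $M^{\lam}$ to sit inside a single $M_d$, and that finite-dimensionality of $M_d$ is needed precisely for the inclusion $\sfD(M)\subset M^*$---is exactly the content of the lemma.
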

\subsection{Universal Affine Vertex Algebras}
For $k\in \C$
define
the $\bg$-module $\Vkg$ by
\begin{align}
 \text{$\Vkg\teigi
U(\affg)\*_{U(\sg[t]\+\C K\+
\C \Dg)}\C_k
$,
}
\end{align}
where
$\C_k$
is  the one-dimensional representation
of
$\sg[t]\+\C K\+
\C \Dg$
on which
$\sg[t]\+
\C \Dg$
acts trivially
and $K$ acts as the multiplication by $k$.

Define a field $J(z)$ on $\Vkg$ for $J\in \fing$
by
\begin{align}
J(z)=\sum_{n\in \Z}J(n)z^{-n-1}.
\end{align}

There is a unique
vertex algebra
structure on
$V^k(\sg)$
such that 
$\1=1\* 1\in V^k(\fing)$ is the vacuum vector 
and
$\{J(z); J\in \fing\}$ is a set of generating fields.
The   vertex algebra
$\Vkg$ is
 called the 
{\em universal affine vertex algebra}
associated with $\sg$
at level $k$.

\subsection{Clifford Vertex Algebras}
Set 
\begin{align}
&L{\sg_{>0}}=\sg_{>0}\* \C[t,t\inv],
\quad L{\sg_{<0}}=\sg_{<0}\* \C[t,t\inv].
\end{align}
They
are nilpotent subalgebras of $\affg$.

Let
$\Cl$ be
  the {Clifford algebra}
associated with
$L{\sg_{<0}}\+L{\sg_{>0}}
$
and the restriction  of $(~|~)$ to $L{\sg_{<0}}\+ L{\sg_{>0}}$.
The superalgebra $\bCl$
has the following generators and relations:
\begin{align*}
&\text{generators: }\psi_{\alpha}(n)
&(\alpha\in
 \sroots_{\ne 0},
n\in \Z),\\
&\text{relations: }\psi_{\alpha}(n)\text{ is odd},\\
&\qquad  \quad \quad
[\psi_{\alpha}(m),\psi_{\beta}(n)]=\delta_{\alpha+\beta,0}
\delta_{m+n,0}
&
(\alpha,\beta\in \sroots_{\ne 0},
m,n\in \Z).
\end{align*}
Here $\sroots_{\ne 0}=\sroots_{<0}\sqcup \sroots_{>0}$.

The algebra 
$\Cl$   contains
the Grassmann algebras
$\bigwedge(L{\sg_{<0}})$ 
and $\bigwedge(L{\sg_{>0}})$ 
 as its subalgebras;
$\bigwedge(L{\sg_{<0}})
=\bra \psi_{\alpha}(n);  \alpha\in \sroots_{<0},n\in \Z\ket$,
$\bigwedge(L{\sg_{>0}})
=\bra \psi_{\alpha}(n);  \alpha\in \sroots_{>0},n\in \Z\ket$.
One has
\begin{align}
 \Cl=\bigwedge(L{\sg_{>0}})\*\bigwedge(L{\sg_{<0}})
\label{eq:2006-07-31-14-06}
\end{align}
 as  linear spaces.

In view of \eqref{eq:2006-07-31-14-06},
the adjoint action of $\bh$
on $L{\sg_{<0}}\+ L{\sg_{>0}}$ induces an 
 action of $\h$ on $\Cl$:
$\bCl=\bigoplus_{\lam\in Q}\bCl^{\lam}$.

Let $\Lamsemi{\bullet}(L{\sg_{>0}})$  be the irreducible
representation of $\Cl$ generated by the vector $\1$
such that
\begin{align}
 \text{$\psi_{\alpha}(n)\1=0$
\quad 
if
$\alpha+n\delta\in \prroots$.
}
\end{align}
Then
$
\semiLamp{\bullet}= 
\bigwedge(L{\sg_{<0}}\cap \affn_-)\*\bigwedge(L{\sg_{>0}}\cap \affn_-)
$
as  linear spaces.
We regard $\Lamsemi{\bullet}(L{\sg_{>0}})$
 as an $\bh$-module
under this identification:
\begin{align*}
 \Lamsemi{\bullet}(L{\sg_{>0}})=\bigoplus_{\lam\in -Q_+}
\Lamsemi{\bullet}(L{\sg_{>0}})^{\lam}.
\end{align*}

The space $\semiLamp{\bullet}$ is $\Z$-graded
by {\em charge}:
\begin{align}
\semiLamp{\bullet}=\bigoplus_{i\in \Z}\semiLamp{i}.
\end{align}
The charge 
of
$\1$,
$\psi_{\alpha}(n)$ and
$\psi_{-\alpha}(n)$ 
with $\alpha\in \sroots_{>0}$
and $n\in \Z$
are $0$,
 $-1$ and $1$,
respectively.
The $\Cl$-module
$\semiLamp{\bullet}$ is 
often called the 
{\em space of  semi-infinite forms} on $L\fing_{>0}$.

There is a unique vertex (super)algebra
structure on
$\semiLamp{\bullet}$
such that
$\1$ is the vacuum vector,
and  
\begin{align}
& 
\psi_{\alpha}(z)\teigi \sum_{n\in \Z}\psi_{\alpha}(n)z^{-n-1}
\quad 
\text{with $\alpha\in \sroots_{>0}$,
}\\
& 
\psi_{\alpha}(z)\teigi \sum_{n\in \Z}\psi_{\alpha}(n)z^{-n}
\quad \text{with $\alpha\in \sroots_{<0}$
}
\end{align}
are generating fields.
The vertex algebra $\semiLamp{\bullet}$
is also  called the {\em Clifford vertex algebra}
associated with $L\sg_{>0}$.

\subsection{The $W$-Algebra $\Wg$}
Because 
both $\Vkg$ and $\semiLamp{\bullet}$ 
are vertex algebras,
the tensor product
\begin{align}
\bCg
=
\Vkg \* \semiLamp{\bullet}
\label{eq:2006-05-17-13-15}
\end{align}
is also a vertex algebra.
Set $\mathcal{C}^i=V^k(\fing)\* \semiLamp{i}$,
so that 
\begin{align}
\bCg=\bigoplus_{i\in \Z}\mathcal{C}^i.
\end{align}

Let $Q(z)$ be the odd field on $\bCg$
defined by
\begin{align}
 Q(z)
= Q^{\st}(z)+\chi(z),
\end{align}
where
\begin{align*}
& Q^{\st}(z)
=
\sum_{\alpha\in \sroots_{>0}}
J_{\alpha}(z)\psi_{-\alpha}(z)-\frac{1}{2}
\sum_{\alpha,\beta,\gamma\in \sroots_{>0}}c_{\alpha,\beta}^{\gamma}
\psi_{-\alpha}(z)\psi_{-\beta}(z)\psi_{\gamma}(z), 
 \\
 &\chi(z)
=
\sum_{\alpha\in \sroots_{>0}}\schi(J_{\alpha})
\psi_{-\alpha}(z)
\end{align*}
($\schi$ is defined in (\ref{eq:character-finite})).
One has 
\begin{align}\label{eq:ope=0}
 [Q(z),Q(w)]=0,
\end{align}
and therefore,
\begin{align}\label{eq:ope=0,2}
Q_{(0)}^2=0
\end{align}
because $Q(z)$ is odd.
(Recall that $Q_{(0)}=\Res_zQ(z)$, see \S \ref{subsection:fields}.)

Since $\affQp \mathcal{C}^i\subset \mathcal{C}^{i+1}$,
 $(\bCg,\affQp)$
is a cochain complex.

\begin{Def}
The universal affine $W$-algebra $\Wg$
associate with $(\sg,f)$ at level $k$
is 
the zeroth cohomology of the complex $(\bCg,\affQp)$:
\begin{align}\label{eq:def-of-affine-W}
 \Wg=H^0(\bCg, \affQp).
\end{align}
\end{Def}
 The $W$-algebra
 $\Wg$ inherits the vertex algebra structure from $\bCg$.

\subsection{The Hamiltonian of $\Wg$}
Set
\begin{align}
\label{eq:Hamitonian}
 H=-D-\frac{1}{2}h_0,
\end{align}
where
$h_0$ is the element in the $\mathfrak{sl}_2$-triple
$\{e,h_0,f\}$ fixed in \S \ref{subsection:sl2-triple}.
The right-hand-side is considered as an element
of $\affh$
which acts diagonally on 
the complex $\bCg=\Vkg\* \semiLamp{\bullet}$.

One knows that  $H$ defines a Hamiltonian 
on $\bCg$
(cf.\ \S 4.9 of \cite{Kac98}).
\begin{Lem}\label{Lem:Hamiltonian-commutes}
One has
$[H, \affQp]=0$,
\end{Lem}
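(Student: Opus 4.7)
The plan is to use that $H=-D-\tfrac{1}{2}h_0$ is a Hamiltonian on $\bCg$, and reduce $[H,\affQp]=0$ to a conformal-weight check. Since any Hamiltonian satisfies $[H,Y(a,z)]=Y(Ha,z)+z\partial_z Y(a,z)$, taking residues gives $[H,a_{(0)}]=(Ha)_{(0)}-a_{(0)}$; in particular, for an $H$-homogeneous state $a$ of conformal weight $\Delta_a$, $[H,a_{(0)}]=(\Delta_a-1)a_{(0)}$. Hence the claim reduces to showing that each homogeneous state whose associated field is a summand of $Q(z)$ has conformal weight exactly $1$.

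First I compute the weights of the generating fields. From $[D,X(n)]=nX(n)$ and the adjoint $h_0$-action, for $X\in\sg$ with $[h_0,X]=mX$ the state $X(-1)\1\in\Vkg$ satisfies $\Delta_X=1-m/2$. An analogous calculation on $\semiLamp{\bullet}$ gives $\Delta_{\psi_\alpha}=1-\alpha(h_0)/2$ and $\Delta_{\psi_{-\alpha}}=\alpha(h_0)/2$ for $\alpha\in\sroots_{>0}$.

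Next I verify weight $1$ summand by summand, using additivity of conformal weights under the normally ordered product. For the bilinear piece $J_\alpha(z)\psi_{-\alpha}(z)$ with $\alpha\in\sroots_{>0}$, the weight is $(1-\alpha(h_0)/2)+\alpha(h_0)/2=1$. For a trilinear piece $c_{\alpha,\beta}^\gamma\,\psi_{-\alpha}(z)\psi_{-\beta}(z)\psi_\gamma(z)$, the structure constant is nonzero only when $\gamma=\alpha+\beta$, so $\gamma(h_0)=\alpha(h_0)+\beta(h_0)$ and the weight is $\alpha(h_0)/2+\beta(h_0)/2+1-\gamma(h_0)/2=1$. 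For the $\chi$-term $\bar{\chi}(J_\alpha)\psi_{-\alpha}(z)$, the coefficient $\bar{\chi}(J_\alpha)=(f\,|\,J_\alpha)$ is nonzero only when $J_\alpha$ has $h_0$-weight $+2$ (since $[h_0,f]=-2f$ and $(\cdot\,|\,\cdot)$ pairs opposite weights), forcing $\alpha(h_0)=2$ and weight $1$.

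The argument amounts to a bookkeeping of $h_0$-weights with no serious technical obstacle; the only mildly delicate point is the $\chi$-term, where it is essential that $h_0$ comes from the $\mathfrak{sl}_2$-triple $(e,h_0,f)$ rather than merely from the grading element $x_0$: the identity $[h_0,f]=-2f$ (stronger than $f\in\sg_{-1}$) is precisely what yields the correct weight.
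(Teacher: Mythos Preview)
Your proof is correct and follows essentially the same approach as the paper. The paper's proof is terser: it declares $[H,Q^{\st}_{(0)}]=0$ ``obvious'' and for the $\chi$-term records exactly your key observation (equation \eqref{eq:h-vs-x}) that $\alpha(h_0)=2$ whenever $\schi(J_\alpha)\ne 0$; you have simply unpacked the ``obvious'' part into an explicit conformal-weight computation, and your closing remark about needing $h_0$ (not merely $x_0$) is precisely the point.
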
 
\begin{proof}
 Obviously $[H, Q_{(0)}^{\st}]=0$.
Also,
\begin{align}
 \text{$\alpha(h_0)=2$
for all $\alpha$ such that $\schi(J_{\alpha})\ne 0$.
}\label{eq:h-vs-x}
\end{align}This gives $[H, \chi_{(0)}]=0$.
\end{proof}
From Lemma  \ref{Lem:Hamiltonian-commutes}
it follows that $H$ defines a Hamiltonian of $\Wg$.
One has
\begin{align}
 \Wg&=\bigoplus_{\Delta\in \frac{1}{2}\Z}\Wg_{\Delta},\nno
\\ &\Wg_{\Delta}=\{a\in \Wg; Ha =\Delta a\}. 
\end{align} 
\subsection{Generating Fields of $\Wg$}
\label{subsection:generating-field-of-W}
Set
\begin{align}
\wJ_{a}(z)=\sum_{n\in \Z}\wJ_{a}(n)z^{-n-1}
=J_a(z)-\sum_{\beta,\gamma\in \roots_{>0}}c_{\alpha,\beta}^{\gamma}
:\psi_{-\beta}(z)\psi_{\gamma}(z):
\end{align}
for $a\in \sI\sqcup \sroots$.
One has \cite[(2.5)]{KacWak04} on $\bCg$
\begin{align}
 &[\wJ_a(m),\wJ_b(n)]\label{eq:commutation-relation-wJ}
\\
&=\sum_{d}c_{a,b}^d \wJ_d(m+n)
+\left((k+h\che)(a|b)-\frac{1}{2}\kappa_{\sg_0}(a,b)\right)
m\delta_{m+n,0}\id \nno,\\
&[\wJ_a(m),
 \psi_{\alpha}(n)]=\sum_{d}c_{a,\alpha}^{\beta}\psi_{\beta}(m+n)
\label{eq:commutation-relation-wJ-2}
\end{align} 
provided  that either
$a,b\in \sroots_{\geq 0}\sqcup \sI$
and $\alpha\in \sroots_{>0}$,
or $a,b\in \sroots_{\leq 0}\sqcup \sI$
and $\alpha\in \sroots_{<0}$,
where $\kappa_{\sg_0}(a,b)$ is the 
Killing form of $\sg_0$.

Let $\bCg_+$
be the vertex subalgebra of
$\bCg$ generated by 
the fields $\wJ_a(z)$ and $\psi_{\beta}(z)$ with 
$a\in \sI\sqcup \sroots_{\leq 0}$ and $\beta\in \sroots_{<0}$.
By (\ref{eq:commutation-relation-wJ})
and (\ref{eq:commutation-relation-wJ-2}),
$\bCg_+$ is spanned by the vectors 
\begin{align*}
\wJ_{a_1}(m_1)\dots \wJ_{a_r}(m_r)\psi_{\beta_1}(n_1)\dots
\psi_{\beta_s}(n_s)\1
\end{align*}
 with 
$a_i\in \sI\sqcup \sroots_{\leq 0}$,
$\beta_i\in \sroots_{<0}$,
$m_i,n_i\in \Z$.

Similarly
let $\bCg_-$
be the vertex subalgebra of
$\bCg$ generated by 
the fields $\wJ_{\alpha}(z)$ and $\psi_{\beta}(z)$ with 
$\alpha, \beta\in \sroots_{>0}$.
Then
$\bCg_-$ is spanned by the vectors 
\begin{align*}
\wJ_{\alpha_1}(m_1)\dots \wJ_{\alpha_r}(m_r)\psi_{\beta_1}(n_1)\dots
\psi_{\beta_s}(n_s)\1
\end{align*} with 
$a_i\in \sroots_{>0}$,
$\beta_i\in \sroots_{>0}$,
$m_i,n_i\in \Z$.

One has the linear isomorphism
\begin{align}
 \bCg\cong \bCg_-\* \bCg_+.
\label{eq:tensorproduct-decomposition}
\end{align} 
Moreover  
it was shown  \cite{KacWak04} (cf.\ \cite{BoeTji94,FreBen04})
that
both $\bCg_{\pm}$ are subcomplexes of 
$\bCg$, 
and that 
\begin{align*}
H^i(\bCg_-)=\begin{cases}
	      \C &(i=0)\\
0&(i\ne 0)
	     \end{cases}.
\end{align*}
Therefore by the K\"{u}nneth theorem
\begin{align}
 H^{\bullet}(\bCg)=H^{\bullet}(\bCg_+).
\label{eq:2007-12-26-1}
\end{align}
It follows that  we may identify $\Wg$
with the vertex subalgebra $H^0(\bCg_+)$
of $ \bCg$
(Note that the cohomological gradation takes only non-negative values on $\bCg_{+}$.
):
\begin{align}
 \Wg=H^0(\bCg_+)\subset \bCg.
\label{eq:W-as-sub}
\end{align}

Let $\sg^f_{\aff}=\sg^f \* \C[t,t\inv] \+ \C \1$
be the central extension 
of the Lie algebra
$\sg^f\* \C[t,t\inv]$
with respect to the 2-cocycle $\phi_k$,
defined by
$\phi_k(a,b)=(k+ h\che)
	(a|b)-\frac{1}{2}\kappa_{\sg_0}(a,b)$.
Set 
\begin{align*}
V^{\phi_k}(\sg^f)=U(\sg^f_{\aff})\*_{U(\sg^f\* \C[t]\+ \C \1)}\C
\end{align*}
where $\C$ is the  
$\sg^f\* \C[t]\+ \C \1$-module on which
$\sg^f\* \C[t]$ acts trivially and $\1$ acts as $1$.

By (\ref{eq:commutation-relation-wJ})
one can regard 
$V^{\phi_k}(\sg^f)$
as a vertex subalgebra of $\Vkg$.

\begin{Th}[\cite{KacWak04}]
\label{Th:KW} 
For any $k\in \C$ one has the following.

 \begin{enumerate}
  \item It holds that
$H^i(\bCg_+)=0$ for all $i\ne 0$.
Therefore
$H^i(\bCg)=0$ for all $i\ne 0$.
\item There exists an exhaustive,
separated  filtration $\{F_p \Wg\}$
of the vertex algebra $\Wg$
such that
\begin{align*}
 \gr^F \Wg\cong  V^{\phi_k}(\sg^f)
\end{align*}
as graded vertex algebras.
 \end{enumerate}
\end{Th}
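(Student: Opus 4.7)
The plan is to use a filtration-and-spectral-sequence argument that identifies the associated graded of $\bCg_+$ with a loop-Koszul complex built from the exact sequence (\ref{eq:exact-gf}). The point is that the exactness $0\to\sg^f\to\sg_{\leq 0}\overset{\ad f}{\ra}\sg_{<0}\to 0$ simultaneously forces the vanishing of higher cohomology and produces the identification $\gr^F\Wg\cong V^{\phi_k}(\sg^f)$ in part (ii), so both statements of the theorem come out of a single computation.

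First I would equip $\bCg_+$ with an increasing Kazhdan-type filtration $\{F_p\bCg_+\}$: assign to a generator $\wJ_a(n)$ with $a\in\sg_{-j}$ ($j\geq 0$) a filtration degree governed by its conformal weight $\Delta_{\wJ_a}=1+j/2$, and similarly for $\psi_\beta(n)$ with $\beta\in\sroots_{<0}$, extending multiplicatively via normal ordering. The key technical check is that $Q_{(0)}=Q^{\st}_{(0)}+\chi_{(0)}$ preserves this filtration; the $\chi_{(0)}$-piece behaves correctly because $\schi(J_\alpha)\ne 0$ forces $\alpha(h_0)=2$ by (\ref{eq:h-vs-x}). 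On the associated graded the commutators (\ref{eq:commutation-relation-wJ})--(\ref{eq:commutation-relation-wJ-2}) degenerate to their classical (abelian) limit, so $\gr^F\bCg_+$ becomes the tensor product of the free (super)commutative vertex algebra generated by $\sg_{\leq 0}\otimes t^{-1}\C[t^{-1}]$ with the fermionic algebra generated by $\sg_{<0}\otimes\C[t,t^{-1}]$, and the induced differential $d_0$ becomes the Chevalley--Eilenberg/Koszul differential attached to $\ad f:\sg_{\leq 0}\ra \sg_{<0}$ shifted by the character $\chi$.

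A loop-level Koszul computation then uses (\ref{eq:exact-gf}) directly: since $\ad f$ is surjective with kernel $\sg^f$, one obtains $H^i(\gr^F\bCg_+,d_0)=0$ for $i\ne 0$ and $H^0(\gr^F\bCg_+,d_0)\cong V^{\phi_k}(\sg^f)$ as graded vertex algebras. The cocycle $\phi_k$ emerges automatically, because the residual quadratic term $(k+h\che)(a|b)-\tfrac{1}{2}\kappa_{\sg_0}(a,b)$ in (\ref{eq:commutation-relation-wJ}) survives the passage to $\gr^F$ when restricted to $\sg^f$. Because the filtration is finite on each conformal weight piece $(\bCg_+)_{\Delta}$, the spectral sequence
\[
E_1^{p,q}=H^{p+q}(\gr^F_p\bCg_+)\ \Longrightarrow\ H^{p+q}(\bCg_+)
\]
converges and degenerates at $E_1$. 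This yields part (i), and via (\ref{eq:2007-12-26-1}) also the vanishing on all of $\bCg$; the induced filtration $F_p\Wg:=\Wg\cap F_p\bCg_+$ is visibly exhaustive and separated on each $(\Wg)_{\Delta}$, and the spectral sequence collapse gives $\gr^F\Wg\cong V^{\phi_k}(\sg^f)$, proving part (ii).

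The principal obstacle is the bookkeeping of the filtration: one has to verify in detail that $Q_{(0)}$ preserves it (with $\chi_{(0)}$ landing in the correct degree thanks to \eqref{eq:h-vs-x}) and that $d_0$ on $\gr^F\bCg_+$ really is the $\chi$-twisted Koszul differential of the map $\ad f$, so that exactness of (\ref{eq:exact-gf}) can be invoked. Once these classical/quasi-classical identifications are set up, both the cohomology vanishing and the PBW-type isomorphism $\gr^F\Wg\cong V^{\phi_k}(\sg^f)$ follow from standard Koszul homology and the convergence of a filtration-finite spectral sequence.
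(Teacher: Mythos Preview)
Your overall strategy---filter $\bCg_+$, compute the cohomology of the associated graded via the exact sequence~\eqref{eq:exact-gf}, and run a spectral sequence---is indeed the approach of \cite{KacWak04}, and the paper itself does not give a proof beyond the Remark following the statement. However, the \emph{specific} filtration you propose is not the right one, and this creates a genuine gap in part~(ii).

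The filtration indicated in the paper's Remark is
\[
F_p\,\mathcal{C}_+^n=\bigoplus_{\langle\lambda,x_0\rangle\geq p-n}(\mathcal{C}_+^n)^{\lambda},
\]
i.e.\ by the quantity $\langle\lambda,x_0\rangle+n$ ($x_0$-weight plus cohomological degree). This is in fact a \emph{grading} of the algebra: the commutators \eqref{eq:commutation-relation-wJ}--\eqref{eq:commutation-relation-wJ-2} preserve both $x_0$-weight and cohomological degree, so the full non-abelian vertex-algebra structure---including the cocycle $\phi_k$---survives on $\gr^F\bCg_+$. What changes is only the differential: $Q^{\st}_{(0)}$ raises the grading by~$1$ while $\chi_{(0)}$ preserves it, so $d_0=\chi_{(0)}$ alone. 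The $E_1$-term is then the (non-commutative) Koszul cohomology $H^{\bullet}(\bCg_+,\chi_{(0)})$, and the surjectivity of $\ad f$ in~\eqref{eq:exact-gf} gives $E_1\cong V^{\phi_k}(\sg^f)$ concentrated in degree~$0$.

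By contrast, your Kazhdan/conformal-weight filtration, extended multiplicatively, forces the commutators to drop in filtration degree, so $\gr^F\bCg_+$ becomes (super)commutative---as you yourself say. But then the central term $\phi_k(a,b)\,m\,\delta_{m+n,0}$ cannot survive, contrary to your later claim; and more importantly $\gr^F\Wg$ would be a \emph{commutative} vertex algebra, which is not isomorphic to $V^{\phi_k}(\sg^f)$ (recall $\sg^f$ is not abelian in general). Your filtration would still yield the vanishing in~(i), but it cannot produce the isomorphism in~(ii). A smaller point: the formula $\Delta_{\wJ_a}=1+j/2$ for $a\in\sg_{-j}$ confuses the good grading by $x_0$ with the Dynkin grading by $h_0/2$; the conformal weight is $1+d_a$ where $[h_0,a]=-2d_a a$, and these need not coincide.
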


\begin{Rem}
{\rm
 The filtration in Theorem \ref{Th:KW}
arises from the spectral sequence 
associated with  the filtration of 
$\bCg_+$ defined by \begin{align*}
F_p \mathcal{C}_+^n=
\bigoplus\limits_{\bra \lam,x_0\ket\geq  p-n}(\mathcal{C}_+^n)^{\lam}
		    \end{align*}
 (cf.\ \S 4 of \cite{Ara07}). }
\end{Rem}
Because $\sg^f$
is preserved by the adjoint action
of $x_0$ and $h_0$,
there exists a basis
 $\{u_j;j=1,\dots, \dim \sg^f\}$
of $\sg^f$
consisting of simultaneous
eigenvectors of $\ad x_0$
and $\ad h_0$.
Let $d_j\in \frac{1}{2}\Z_{\geq 0}$ be the 
half of the eigenvalue of
$\ad h_0$ on $u_j$:
\begin{align}
 [h_0, u_j]=-2 d_j u_j.
\end{align}

By Theorem \ref{Th:KW} there exist
homogeneous elements $\vecW{j}$
of $\Wg$
 with $j=1,\dots, \dim \sg^f$
whose symbols
are $u_j(-1)\1$,
and $\Wg$ is (strongly \cite{Kac98})
generated by the fields
\begin{align}
 \vecW{j}(z)=Y(\vecW{j},z)
\end{align}
in $\bCg$.
The vector $\vecW{j}$ has the conformal weight 
$1+ d_j$.
Thus it follows that
$\Wg$ is positively graded:
\begin{align}
 \Wg=\bigoplus_{\Delta\in \frac{1}{2}\Z_{\geq 0}}
\Wg_{\Delta},\quad
\Wg_{0}=\C 1.
\end{align}

\section{Ramond Twisted Representation of Affine $W$-Algebras}
\subsection{Ramond Twisted Representations of $\Wg$}
Let $\sigma_R: \bCg\ra \bCg$ be the automorphism
of order $\leq 2$
 defined by 
\begin{align}
 \sigma_R=e^{\pi i\ad h_0}.
\end{align}
By (\ref{eq:h-vs-x}),
$\sigma_R$ fixes 
the vector $Q=Q_{(-1)}\1$.
Therefore \cite{KacWak05} 
$\sigma_R$ defines an automorphism of $\Wg$.

A $\sigma_R$-twisted representation of 
$\Wg$ is called a 
{\em Ramond twisted  representation}
of
$\Wg$. 

Note that $\sigma_R=\sigma_H$ (see \S \ref{subsection:Zhu-algbera}
and (\ref{eq:Hamitonian})).
Therefore Ramond twisted representations
are exactly the $H$-twisted representations.
\begin{Rem}
{\rm  If the nilpotent element $f$ is even 
then $\sigma_R$ is trivial.
In this case a Ramond twisted representations are  usual 
(non-twisted)
representations.
}\end{Rem}


\begin{Pro}\label{Pro:2007-12-18-03}
 Let $M$ be a $\sigma_R$-twisted  representation of
$\bCg$.
Then
the space 
\begin{align*}
\frac{ \ker((Q)^M_{(0)}: M\ra M)}
{ \im((Q)^M_{(0)}: M\ra M)}
\end{align*}
is naturally a Ramond twisted representation of $\Wg$.
\end{Pro}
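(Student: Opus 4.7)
The plan is to exhibit $(Q)^M_{(0)}$ as an odd square-zero ``derivation'' of the $\bCg$-action on $M$, so that the vertex algebra structure on $\Wg=H^0(\bCg,Q_{(0)})$ automatically induces a $\sigma_R$-twisted representation structure on the cohomology $H(M)\teigi \ker((Q)^M_{(0)})/\im((Q)^M_{(0)})$. Since $\sigma_R(Q)=Q$, the vector $Q$ lies in $\bCg_{\bar 0}$, so by \S\ref{subsection:twisted-representations} the modes $(Q)^M_{(n)}$ are integer-indexed and in particular $(Q)^M_{(0)}\in \End(M)$ is well-defined. Applying the twisted commutator formula (\ref{eq:commutator-formula}) to $a=b=Q$ at $m=n=0$ and using that $Q$ is odd yields
\begin{align*}
 2\bigl((Q)^M_{(0)}\bigr)^{2}
  =\sum_{i\geq 0}\binom{0}{i}(Q_{(i)}Q)^M_{(-i)}=(Q_{(0)}Q)^M_{(0)}=0,
\end{align*}
the last equality because $Q_{(i)}Q=0$ for all $i\geq 0$ in $\bCg$, a consequence of (\ref{eq:ope=0}). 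Hence $H(M)$ is defined.

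Next, for any representative $a\in\ker(Q_{(0)})\cap \bCg^{0}$ of an element of $\Wg$ and any $n$, the same commutator formula gives
\begin{align*}
 [(Q)^M_{(0)}, a^M_{(n)}]_{\pm}=\sum_{i\geq 0}\binom{0}{i}(Q_{(i)}a)^M_{(n-i)}=(Q_{(0)}a)^M_{(n)}=0,
\end{align*}
so $a^M_{(n)}$ preserves both $\ker((Q)^M_{(0)})$ and $\im((Q)^M_{(0)})$ and descends to a well-defined operator $a^{H(M)}_{(n)}$ on $H(M)$. If $a=Q_{(0)}a'$ is a coboundary, the same identity applied to $a'$ expresses $a^M_{(n)}$ as $\pm[(Q)^M_{(0)}, (a')^M_{(n)}]_{\pm}$, which acts as zero on $H(M)$. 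Consequently the assignment $a\mapsto a^{H(M)}_{(n)}$ depends only on the class $[a]\in \Wg$ and yields a linear map $\Wg\to (\End H(M))[[z^{1/2},z^{-1/2}]]$.

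Finally, the twisted Borcherds identity (\ref{eq:Borcherds-id}), the vacuum axiom $Y^M(\1,z)=\id_M$, and the $\sigma_R$-equivariance $Y^M(\sigma_R a,z)=Y^M(a,e^{2\pi i}z)$ all hold on $M$ with respect to $\bCg$, and every operator appearing in them preserves $\ker((Q)^M_{(0)})$ and $\im((Q)^M_{(0)})$. They therefore descend verbatim to $H(M)$ with respect to $\Wg$, exhibiting $H(M)$ as a Ramond twisted representation. The main obstacle is purely a bookkeeping one: ensuring that the super-signs in the commutator and Borcherds identities for the vertex superalgebra $\bCg$ are tracked consistently, in particular in the verifications above. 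Once this is done, the whole argument is formal homological algebra built on the observation that $(Q)^M_{(0)}$ is a square-zero odd derivation of the twisted $\bCg$-action on $M$.
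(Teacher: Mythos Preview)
Your proof is correct and follows essentially the same approach as the paper, which simply cites the twisted commutator formula (\ref{eq:commutator-formula}) together with (\ref{eq:ope=0,2}) to conclude both that $((Q)^M_{(0)})^2=0$ and that the $\Wg$-action descends. Your version spells out the details the paper leaves implicit (well-definedness on coboundaries, descent of the Borcherds identity), but the underlying mechanism is identical.
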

\begin{proof}
%
 By  
(\ref{eq:commutator-formula})
and (\ref{eq:ope=0,2}),
the square of $(Q)^M_{(0)}$ is equal to zero.
Therefore the 
above space is well-defined.
The rest also follows from (\ref{eq:commutator-formula}).
\end{proof}

\subsection{$\sigma_R$-Twisted Representations of $\bCg$}
Set
\begin{align*}
\sg_{j}^{\Dyn}=\{x\in \sg; [h_0,x]=2j x\}.
\end{align*}
Then 
$\fing=\bigoplus_{j\in \frac{1}{2}\Z}\fing_j^{\Dyn}$
gives a good grading for $f$,
called the {\em Dynkin grading} \cite{KacRoaWak03}.

Let 
\begin{align*}
\affg^R=\bigoplus_{j\in \frac{1}{2}\Z}\fing^{\Dyn}_{\bar j}\* \C t^j
\+ \C K \+ \C D
\end{align*} be the $\sigma_R$-twisted
affine Lie algebra \cite{Kac90},
where 
$\fing_{\bar{j}}^{\Dyn}=\bigoplus\limits_{r\in \frac{1}{2}\Z
\atop r\equiv j \pmod{\Z}}\fing_r^{\Dyn}$,
and the commutation relations are given by the same formula as $\affg$.

We write $J(n)^R$ for $J\* t^n\in \affg^R$.
Also, to avoid confusion we write $K^R$ and $D^R$ 
for $K$ and $D$ in $\affg^R$,
respectively.
\begin{Lem}\label{Lem:2007-12-18-1}
Let $M$ be a vector space.
Defining a $\sigma_R$-twisted $V^k(\affg)$-module structure 
on $M$ is  equivalent to 
defining a $\affg^R$-module structure 
on $M$ of level $k$
such that $J(n)^R m=0$ for all $m\in M$
and $n\gg 0$.
\end{Lem}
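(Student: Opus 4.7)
The statement is a standard equivalence between twisted modules over an affine vertex algebra and modules over the corresponding twisted current algebra, and I would prove it in two directions using the Borcherds identity \eqref{eq:Borcherds-id} and Li's reconstruction theorem \cite{Li96-2}.

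First I would pin down how $\sigma_R$ acts on the generating subspace. Since $Y(J,z) = J(z)$ for $J \in \fing$ via the embedding $J \mapsto J(-1)\1$, and since $\sigma_R = e^{\pi i \ad h_0}$ acts on $\Vkg$ by functoriality of the vertex algebra structure from its action on the generators, one has $\sigma_R(J) = e^{2\pi i r}J$ for $J \in \fing_r^{\Dyn}$. In the paper's $N = 2$ convention $V_{\bar j} = \{v : \sigma(v) = e^{-\pi i j}v\}$, this places $\fing_r^{\Dyn}$ inside $\Vkg_{\bar 0}$ when $r \in \Z$ and inside $\Vkg_{\bar 1}$ when $r \in \frac{1}{2}+\Z$. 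Consequently, for a $\sigma_R$-twisted module $M$, the twisted field $Y^M(J,z) = \sum_n J^M_{(n)} z^{-n-1}$ has modes supported on $n \in r + \Z$, matching exactly the index set for $J \otimes t^n$ in $\affg^R$.

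For the forward direction, given a $\sigma_R$-twisted $\Vkg$-module $M$, I would set $J(n)^R := J^M_{(n)}$, $K^R := k \cdot \id_M$ (and $D^R$ defined by the Hamiltonian if graded, or ignored otherwise). The truncation $J(n)^R m = 0$ for $n \gg 0$ is built into the definition of a twisted field. The commutator formula \eqref{eq:commutator-formula} applied to $J, L \in \fing$, combined with the elementary OPE identities $J_{(0)}L = [J,L]$ and $J_{(1)}L = k(J|L)\1$ in $\Vkg$, then translates directly to the commutation relations of $\affg^R$.

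For the reverse direction, given an $\affg^R$-module $M$ of level $k$ satisfying the truncation, I would define generating fields $Y^M(J,z) := \sum_{n \in r + \Z} J(n)^R z^{-n-1}$ for $J \in \fing_r^{\Dyn}$, which are mutually local $2$-twisted fields by the $\affg^R$ commutation relations. The assignment is then extended to all of $\Vkg$ by iteratively defining $Y^M$ on normally ordered products via \eqref{eq:2007-12-25-2}, and one verifies that it satisfies the full twisted Borcherds identity \eqref{eq:Borcherds-id}. The main obstacle is precisely this last verification: producing a bona fide twisted $\Vkg$-module structure from the generator data rather than merely matching brackets on the level of modes. However, this is exactly the content of Li's reconstruction theorem for twisted modules \cite{Li96-2}: because $\Vkg$ is freely generated as a vertex algebra by $\fing$ and the fields $J(z)$ are mutually local, no further compatibility condition beyond locality of the generating currents is required, and the Borcherds identity on all of $\Vkg$ follows from the $(n)$-product formulas for normally ordered products.
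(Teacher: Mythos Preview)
Your proposal is correct and follows essentially the same route as the paper: the forward direction uses the commutator formula \eqref{eq:commutator-formula} together with the OPE $J_{(0)}L=[J,L]$, $J_{(1)}L=k(J|L)\1$, and the reverse direction appeals to Li's machinery \cite{Li96-2} to promote the local family of $2$-twisted currents to a full $\sigma_R$-twisted $\Vkg$-module. The only cosmetic difference is that the paper phrases the converse via constructing a vertex algebra $V$ inside the space of $2$-twisted fields on $M$, obtaining a vertex algebra homomorphism $\Vkg\to V$ from the universal property of $\Vkg$, and then invoking Proposition~3.17 of \cite{Li96-2}; your ``reconstruction theorem'' phrasing is the same content packaged slightly differently.
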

\begin{proof}
By (\ref{eq:commutator-formula-2}),
given a $\sigma_R$-twisted module structure on $M$
one has 
\begin{align}
\label{eq:07-12-18-1}
& [Y^M(J(-1)\1,z), Y^M(J'(-1)\1,w)]\\
&=Y^M([J,J'](-1)\1,w)\delta_j (z-w)
+k (J|J')\id_M\partial_w \delta_j(z-w)
\nno 
\end{align}
for $J\in \fing_{\bar j}^{\Dyn}$, $J'\in \fing$.
It follows that 
the correspondence 
$J(n)^R\mapsto (J(-1)\1)_{(n)}^M$
define a representation of 
$\affg^R$ on $M$
of level $k$ with $J(n)^R m=0$ for $m\in M$
and $n\gg 0$. 

Conversely,
suppose that we are given a
$\affg^R$-module structure on $M$ of level $k$ 
such that 
$J(n)^R m=0$ for $m\in M$
and $n\gg 0$.
Define a $2$-twisted filed 
$J(z)^R$ on $M$ by
\begin{align}
 J(z)^R=\sum_{n\in j+\Z}J(n)^R z^{-n-1}
\quad \text{for $J\in \fing^{\Dyn}_{\bar j}$}.
\end{align}
These fields satisfy the same formula  as (\ref{eq:07-12-18-1}):
\begin{align}
 [J(z)^R, J'(w)^R]=[J,J'](w)^R\delta_j (z-w)
+ k(J|J')\id_M  \partial_w  \delta_j (z-w).
\label{eq:2007-12-18-02}
\end{align}
for $J\in \fing_{\bar j}^{\Dyn}$ and $J'\in \fing$.

Let
$V
$ 
be a vertex algebra 
generated by $J(z)^R$ with $J\in \fing$
in the space of $2$-twisted fields on $M$
in the sense of Li \cite{Li96-2}.
By (\ref{eq:2007-12-18-02})
it follows that
the correspondence $J(-1)\1\mapsto J(z)^R$
defines a vertex algebra homomorphism 
from $V^k(\fing)$ to $V$ (cf.\ (\ref{eq:2007-12-25-2})).
Thanks to  Proposition 3.17 of \cite{Li96-2},
this completes the proof.

\end{proof}

Let $\affCl^R$ be the superalgebra generated by the odd fields
$\psi_{\alpha}(n)^R$ ($\alpha\in \sroots_{\ne 0}$,
$n\in \alpha(h_0)/2+\Z$)
with the relations
$[\psi_{\alpha}(m)^R,
\psi_{\beta}(n)^R]=\delta_{m+n,0}\delta_{\alpha+\beta,0}.$

The proof of the following assertion is the same as
that of 
Lemma \ref{Lem:2007-12-18-1}.
\begin{Lem}\label{Lem:2007-12-18-2}
Let $M$ be a $\bCl^R$-module such that 
$\psi_{\alpha}(n)^R m=0$
for all $m\in M$,
$\alpha\in \sroots_{\ne 0}$
and $n\gg 0$.
Then the formulas
\begin{align*}
& Y^M(\psi_{\alpha}(-1)\1,z)=\psi_{\alpha}(z)^R
=\sum_{n\in \alpha(h_0)/2+\Z} \psi_{\alpha}(n)^R z^{-n-1}
\quad (\alpha\in \sroots_{>0}),\\
& Y^M(\psi_{\alpha}(0)\1,z)=\psi_{\alpha}(z)^R
=\sum_{n\in \alpha(h_0)/2+\Z} \psi_{\alpha}(n)^R z^{-n}
\quad (\alpha\in \sroots_{<0})
\end{align*}
defines a $\sigma_R$-twisted  $\semiLamp{\bullet}$-module structure 
on $M$.
\end{Lem}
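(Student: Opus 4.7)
The proof follows the same strategy as the proof of Lemma \ref{Lem:2007-12-18-1}. The hypothesis that $\psi_{\alpha}(n)^R m = 0$ for all $m \in M$ and $n \gg 0$ is precisely what guarantees that the displayed formulas define genuine $2$-twisted fields on $M$ in the sense of \S \ref{subsection:twisted-representations}. Moreover, the index set $\alpha(h_0)/2 + \Z$ on which $\psi_{\alpha}(n)^R$ runs matches the prescription $n \in j/N + \Z$ of \S \ref{subsection:twisted-representations} with $N=2$, since $\sigma_R \psi_{\alpha} = e^{\pi i \alpha(h_0)} \psi_{\alpha}$ so that $\psi_{\alpha}$ lies in the $\sigma_R$-eigenspace labelled by $j \equiv \alpha(h_0) \pmod 2$.

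Next I would compute the commutators of the proposed twisted fields directly from the defining relations $[\psi_{\alpha}(m)^R, \psi_{\beta}(n)^R] = \delta_{m+n,0}\delta_{\alpha+\beta,0}$ of $\bCl^R$. For $\alpha \in \sroots_{>0}$ and $\beta = -\alpha \in \sroots_{<0}$, carefully tracking the two distinct index shifts ($z^{-n-1}$ for positive roots, $z^{-n}$ for negative roots) yields
\begin{equation*}
[\psi_{\alpha}(z)^R, \psi_{-\alpha}(w)^R] = \sum_{n \in \alpha(h_0)/2 + \Z} w^n z^{-n-1} = \delta_j(z-w),
\end{equation*}
with $j \equiv \alpha(h_0) \pmod 2$, while $[\psi_{\alpha}(z)^R, \psi_{\beta}(w)^R] = 0$ when $\alpha + \beta \ne 0$. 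These formulas match exactly the untwisted commutation relations \eqref{eq:commutator-formula-2} satisfied by the generating fields of $\semiLamp{\bullet}$, up to the replacement of $\delta(z-w)$ by its twisted analogue $\delta_j(z-w)$; in particular the twisted fields are pairwise mutually local in the twisted sense.

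Finally I would invoke Proposition 3.17 of \cite{Li96-2}, as in the proof of Lemma \ref{Lem:2007-12-18-1}: since the family $\{\psi_{\alpha}(z)^R\}_{\alpha \in \sroots_{\ne 0}}$ consists of pairwise mutually local $2$-twisted fields obeying the same OPEs as the generating fields of $\semiLamp{\bullet}$, the correspondence $\psi_{\alpha}(-1)\1 \mapsto \psi_{\alpha}(z)^R$ for $\alpha \in \sroots_{>0}$ and $\psi_{\alpha}(0)\1 \mapsto \psi_{\alpha}(z)^R$ for $\alpha \in \sroots_{<0}$ extends uniquely to a vertex algebra homomorphism from $\semiLamp{\bullet}$ into the vertex algebra of $2$-twisted fields on $M$ generated by these fields. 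By the definition of a $\sigma_R$-twisted representation this is exactly the claimed $\sigma_R$-twisted $\semiLamp{\bullet}$-module structure on $M$.

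The proof is essentially routine, so there is no single hard step; the one point that requires genuine care is the parity bookkeeping at the boundary between the $\alpha > 0$ and $\alpha < 0$ conventions, namely verifying that the shift between $z^{-n-1}$ and $z^{-n}$ is compatible both with the conformal weights of the states $\psi_{\alpha}(-1)\1$ and $\psi_{\alpha}(0)\1$ in $\semiLamp{\bullet}$ and with the fractional index set $\alpha(h_0)/2 + \Z$ dictated by the $\sigma_R$-twisting.
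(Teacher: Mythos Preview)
Your proposal is correct and follows precisely the approach the paper intends: the paper states that the proof is the same as that of Lemma~\ref{Lem:2007-12-18-1}, and your argument is exactly the Clifford-algebra analogue of that proof, verifying the twisted OPEs and then invoking Proposition~3.17 of \cite{Li96-2}. The care you take with the $z^{-n-1}$ versus $z^{-n}$ shift and the matching of the index set $\alpha(h_0)/2+\Z$ to the $\sigma_R$-eigenspace is appropriate and correct.
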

Set $U_k(\affg^R)=U(\affg^R)/\bra K-k1\ket$.
Let $M$ be a $U_k(\affg^R)\* \bCl^R$-module such that
$J(n)^R m=\psi_{\alpha}(n )^R m=0$
for $n\gg 0$,
$m\in M$, $J\in \fing$ and $\alpha\in \sroots_{\ne 0}$.
Then by Lemmas \ref{Lem:2007-12-18-1}
and \ref{Lem:2007-12-18-2}, 
$M$ can be naturally considered as a $\sigma_R$-twisted
representation of $\bCg$.
By Proposition \ref{Pro:2007-12-18-03},
the space
$\ker (Q)^M_{(0)}/ \im (Q)^M_{(0)} $
is a Ramond twisted representation
of $\Wg$. 
One has
\begin{align}
 (Q)_{(0)}^M=(Q^{\st})_{(0)}^M + \chi_{(0)}^M,
\end{align}
where 
$(Q^{\st})_{(0)}^M $ and $ \chi_{(0)}^M$
are explicitly expressed as follows:
\begin{align*}
& (Q^{\st})_{(0)}^M=
\sum_{\alpha\in \sroots_{>0}
\atop
n\in \alpha(h_0)/2
+\Z }J_{\alpha}(n)^M\psi_{-\alpha}(-n)^M
\\&\qquad\qquad-\frac{1}{2}\sum_{\alpha,\beta,\gamma\in \sroots_{>0}
\atop k
\in \alpha(h_0)/2 +\Z,
l\in \beta(h_0)/2+\Z}
c_{\alpha,\beta}^{\gamma}\psi_{-\alpha}(-k)^M\psi_{-\beta}(-l)^M\psi_{\gamma}(k+l)^M,
\\& \chi_{(0)}^M=\sum_{\alpha\in \sroots_{>0}}\chi(J_{\alpha})\psi_{-\alpha}(1)^M.
\end{align*}

\subsection{Identification with Non-Twisted Representations}
The superalgebra 
$U(\affg^R)\* \bCl^R$ is isomorphic to
$U(\affg)\* \bCl$ \cite{KacWak}:
the isomorphism is given by:
\begin{align*}
 \widehat t_{-\frac{1}{2}h_0}:
& J_{\alpha}(n)^R\mapsto J_{\alpha}(n+\alpha(h_0)/2) 
\quad& (\alpha\in \sroots),
\\
& J_i(n)^R\mapsto J_i(n)\quad &(i\in \sI, \ n\ne 0),\\
& J_i(0)^R \mapsto J_i(0)
+\frac{1}{2}(h_0|J_i)K
\\
&K^R\mapsto K, \\
&D^R \mapsto D-\frac{1}{2}h_0(0),\\
& \psi_{\alpha}(n)^R\mapsto \psi_{\alpha}(n+\alpha(h_0)/2) 
\quad& (\alpha\in \sroots_{\ne 0}, n\in \Z),
\end{align*}

Set $U_k(\affg)=U(\affg)/\bra K-k \ket$ for $k\in \C$.
Let $\widehat{w}_0\in \Aut (U_k(\affg)\* \bCl)$
be a lift of the longest element  
$w_0$ of the Weyl group $\finW$.
Set
\begin{align}
 \widehat{y}_0=\widehat{w}_0\widehat{t}_{-\frac{1}{2}h_0}.
\end{align}
Then
$\widehat{y}_0$ defines an isomorphism 
$U_k(\affg^R)\* \bCl^R \isomap U_k(\affg)\* \bCl$.

Let $M$ be a (non-twisted) positive energy representation of
$V^k(\fing)$.
Then 
the space $M\* \semiLamp{\bullet}$ 
can be regarded as a $\sigma_R$-twisted representation of $\bCl$,
by the action
\begin{align}
u\cdot m=\widehat{y}_0(u)m  \label{eq:twisted-action}
\end{align}
for $m\in M\* \semiLamp{\bullet}$ and $u\in U_k(\affg^R)\* \bCl^R$.
Note that in this case
the differential
$(Q)_{(0)}^{M\* \semiLamp{\bullet}}$
is homotopic to
\begin{align*}
\affQ{-}=\affQ{-}^{\st} +\chi_-,
\end{align*}
where 
\begin{align}
\affQ{-}^{\st}=&\sum_{n\in \Z\atop
\alpha\in \sroots_{<0}}J_{\alpha}(-n)\psi_{-\alpha}(n)\\
&-\frac{1}{2}\sum_{k,l\in \Z\atop
\alpha,\beta,\gamma\in \roots_{<0}}c_{\alpha,\beta}^{\gamma}
\psi_{-\alpha}(-k)\psi_{-\beta}(-l)\psi_{\gamma}(k+l),
\nno \\
\chi_-= &\sum_{\alpha\in \sroots_{<0}}\chi(J_{-\alpha})\psi_{\alpha}(0).
\end{align}
One has
\begin{align*}
\affQ{-} ( M\* \semiLamp{i} )
\subset  M\* \semiLamp{i-1}.
\end{align*}
It follows that by
Proposition \ref{Pro:2007-12-18-03}
the homology space
\begin{align*}
 \BRST_{\bullet}(M):=H_{\bullet}(M\* \semiLamp{\bullet}, \affQ{-})
\end{align*}
can be  considered as a 
Ramond twisted representation of $\Wg$. 

The $\sigma_R$-twisted representation $M\* \semiLamp{\bullet}$
of $\bCl$ is graded by the Hamiltonian $-D$,
which acts on it diagonally.
Obviously $D$ commutes with $\affQ{-}$,
and hence 
$\BRST_{\bullet}(M)$ is graded by the Hamiltonian $-D$. 
It follows that
we have obtained  the functor
\begin{align}
 V^k(\fing)\adMod\ra \Wg\adMod, \quad M\mapsto \BRST_{0}(M).
\label{eq:BRST-reduction-functor}
\end{align}
\begin{Rem}
{\rm One has
$ \BRST_{\bullet}(M)=H_{\frac{\infty}{2}+\bullet}
(\L\sg_{<0}, M\* \C_{\chi_-})
$,
where 
the right-hand-side is the Feigin's semi-infinite 
$L\sg_{<0}$-homology \cite{Feu84}
 with the coefficient in the $L\fing_{<0}$-module
$M\* \C_{\chi_-}$,
and
 $\chi_-$ is identified with the character of
$L\fing_{<0}$
such that  $\chi_-(J_{-\alpha}(n))=\delta_{n,0}\schi(J_{\alpha})$.
}\end{Rem}

\subsection{Finite $W$-algebras as $H$-twisted Zhu Algebras}
\label{subsection:Zhu-vs-finiteW}
Let $M\in \Wg\adMod_{\sigma_R}$
and suppose that 
$M_{\tp}$ is concentrated in one degree: $M_{\tp}=M_{d_0}$.
Then
\begin{align*}
 \BRST_{\bullet}(M)=\bigoplus_{d\in d_0+\Z_{\geq 0}}\BRST_{\bullet}(M)_d,
\end{align*}
and therefore, 
\begin{align}
\BRST_{\bullet}(M)_{\tp}=\BRST_{\bullet}(M)_{d_0},
\end{align}
provided that  $\BRST_{\bullet}(M)_{d_0}\ne 0$.

In this case $\BRST_{\bullet}(M)_{\tp}$ is easily described as follows:
Identify the Grassmann algebra $\bigwedge^{\bullet}(\sn_-)$
of $\finn_-$ with the subalgebra of $\bCl$ generated by $\psi_{\alpha}(0)$
with
$\alpha\in \sroots_-$.
Then  $\bigwedge^{\bullet}(\finn_-)$
is also identified with
the
 subspace $\semiLamp{\bullet}_{\tp}$
 of $\semiLamp{\bullet}$.
One has
\begin{align}
 \BRST_{0}(M)_{\tp}=H_0(M_{\tp}\* \bigwedge\nolimits^{\bullet}(\sn_-), 
\affQ{-}).
\end{align}
One sees that
the operator $\affQ{-}$
acts on $M_{\tp}\* \bigwedge^{\bullet}(\finn_-)$
as 
\begin{align}
 \sQ \label{eq:def-of-Q-fd-0}
=&\sum_{\alpha\in \sroots_{<0}}
(J_{\alpha}(0)-\chi(J_{-\alpha}))\psi_{-\alpha}(0)\\
&\qquad 
-\frac{1}{2}\sum_{\alpha,\beta,\gamma\in \sroots_{<0}}
c_{\alpha,\beta}^{\gamma}\psi_{-\alpha}(0)\psi_{-\beta}(0)\psi_{\gamma}(0).
\nno
\end{align}
From this formula it follows that the complex
$(M_{\tp}\* \bigwedge^{\bullet}(\finn_-), \affQ{-})$
is identical to  the
Chevalley-Eilenberg complex 
which defines the Lie algebra $\fing_{<0}$-homology 
$H_{\bullet}^{\Lie}(\fing_{<0}, M_{\tp}\* \C_{\schi_-})$
with the coefficient in the $\fing_{<0}$-module
$M\* \C_{\schi_-}$,
where 
$\C_{\schi_-}=U(\fing_{<0})/\ker \schi_-$
and
$\schi_-$ is the character of $\fing_{<0}$
defined by 
\begin{align*}
\chi_-(J_{\alpha})=\chi(J_{-\alpha}).
\end{align*}
Thus one has
\begin{align}
 \BRST_{\bullet}(M)_{\tp}=
H^{\Lie}_{\bullet}(\finn_-, M_{\tp}\* \C_{\chi_-}).
\label{eq:iso-top}
\end{align}
This in particular means that
$H^{\Lie}_{\bullet}(\finn_-, M_{\tp}\* \C_{\chi_-})$
is a module over
$\Zh_H(\Wg)$.

Recall \cite{De-Kac06}
that 
\begin{align}
\Zh_H(\Wg)\cong \Wfin,
\label{eq:Zhu=finW}
\end{align}
where $\Wfin$ 
is the {\em finite $W$-algebra}
associated with $(\fing,f)$.
The finite $W$-algebra $\Wfin$ may be defined by means of 
the quantum BRST reduction \cite{KosSte87,DAnDe-De-07}:
Let 
$\sCl$
be the Clifford algebra
associated with
$\sg_{<0}\+\sg_{>0}$
and $(~|~)_{|\sg_{<0}\+\sg_{>0}}$.
We identify 
$\sCl$ with the subalgebra
of $\bCl$ generated by $\psi_{\alpha}=\psi_{\alpha}(0)$
with $\sroots_{\ne 0}$.
One has the 
subalgebra $U(\fing)\* \sCl$
in $U_k(\affg)\* \bCl$,
and $\sQ$ can be considered 
as an odd element of $U(\fing)\* \sCl$.
One has $(\sQ)^2=0$,
and thus
\begin{align*}
(\ad \sQ)^2=0.
\end{align*}
Therefore 
$(U(\fing)\* \sCl,\ad \sQ)$ is a  chain
complex (with respect the grading by charge).
The corresponding
homology
\begin{align}
 H_{\bullet}(U(\fing)\* \sCl)
=H_{\bullet}(U(\fing)\* \sCl, \ad \sQ)
\end{align}
is  naturally a $\Z$-graded superalgebra.
\begin{Th}[{\cite{DAnDe-De-07}, cf.\ Theorem 2.4.2 of \cite{Ara07}}]
\label{Th:BRST-finite}
$ $

\begin{enumerate}
 \item It holds that
$H_{i}(U(\fing)\* \sCl)=0 
$  for all $i\ne 0$.
\item There is an algebra isomorphism 
$H_{0}(U(\fing)\* \sCl) 
\cong \Wfin$.
\end{enumerate}\end{Th}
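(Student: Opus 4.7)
\medskip
\noindent\textbf{Proof plan.} The plan is to mimic, in the finite-dimensional setting, the proof of Theorem \ref{Th:KW} by equipping the complex $(U(\fing) \otimes \sCl, \ad \sQ)$ with a Kazhdan-type filtration and analyzing the associated spectral sequence. Concretely, using the good even $\Z$-grading $\fing = \bigoplus_j \fing_j$, I would define a filtration $\{F_p\}$ on $U(\fing) \otimes \sCl$ by assigning weight $1+j$ to each $J \in \fing_j \subset U(\fing)$ together with appropriate weights on the Clifford generators $\psi_{\pm\alpha}$ (indexed by $\sroots_{>0}$) chosen so that $\ad \sQ$ preserves $\{F_p\}$. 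The crucial observation is that $\schi(J_\alpha) \neq 0$ forces $\alpha \in \sroots_{-1}$ (since $f \in \sg_{-1}$), so the character-dependent summand of $\sQ$ lies in the same filtration degree as the ``linear'' summand $J_\alpha \psi_{-\alpha}$; an inspection of \eqref{eq:def-of-Q-fd-0} together with the Jacobi identity for the structure constants then makes filtration-preservation a direct check.

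The next step is to compute the associated graded complex. By PBW, $\gr^F(U(\fing) \otimes \sCl) \cong S(\fing) \otimes \sCl$ as graded associative superalgebras, and the principal symbol of $\ad \sQ$ descends to a classical Koszul/Chevalley--Eilenberg type differential on $S(\fing) \otimes \sCl$. Exploiting the exact sequence
\begin{align*}
 0 \to \sg^f \to \sg_{\leq 0} \overset{\ad f}{\ra} \sg_{<0} \to 0
\end{align*}
of \eqref{eq:exact-gf}, one shows that this associated graded complex is acyclic outside homological degree zero, with
\begin{align*}
 H_0\bigl(\gr^F(U(\fing) \otimes \sCl),\, \sigma(\ad \sQ)\bigr) \cong S(\sg^f).
\end{align*}
This is the semi-classical shadow of the vanishing we want to prove; in the principal case it recovers a classical theorem of Kostant.

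Once this is established, since the filtration is bounded below on each joint $\sh$-weight and charge eigenspace, the spectral sequence $E_1^{p,q} = H_{p+q}(\gr^F) \Rightarrow H_{p+q}(U(\fing) \otimes \sCl)$ converges, and the vanishing above forces collapse at $E_1$. This proves part (i) and yields $\gr^F H_0 \cong S(\sg^f)$ as graded commutative algebras. For part (ii), the associative product on $U(\fing) \otimes \sCl$ descends to $H_0$ (because $\ad \sQ$ is an odd derivation); comparison of $\gr H_0 \cong S(\sg^f)$ with Premet's PBW theorem for $\Wfin$, together with an explicit matching of generators in the spirit of \cite{DAnDe-De-07}, then yields the algebra isomorphism $H_0 \cong \Wfin$.

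The main obstacle will be the vanishing on the associated graded: verifying that the Koszul-type complex in the second paragraph is acyclic outside degree zero and that its $H_0$ is correctly identified with $S(\sg^f)$. This reduces to a regularity/transversality statement for the Slodowy slice $f + \sg^f$, and this is precisely where the good \emph{even} grading hypothesis enters --- the evenness guarantees that all weights in the filtration are integral, sidestepping the half-integer subtleties that would otherwise complicate the Koszul argument.
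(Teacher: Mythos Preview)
The paper does not actually prove Theorem~\ref{Th:BRST-finite}: it is quoted from \cite{DAnDe-De-07} (with a cross-reference to Theorem~2.4.2 of \cite{Ara07}) and used as a black box, so there is no ``paper's own proof'' to compare against. Your sketch is nonetheless essentially the argument of the cited references --- filter $U(\fing)\otimes\sCl$ by a Kazhdan-type degree, identify the associated graded with a Koszul complex concentrated in degree $0$ with $H_0\cong S(\sg^f)$ via the exact sequence \eqref{eq:exact-gf}, and then lift to obtain vanishing and a PBW filtration on $H_0$ matching that of $\Wfin$ --- so the plan is correct and standard.

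One minor correction: in your first paragraph you write that $\schi(J_\alpha)\ne 0$ forces $\alpha\in\sroots_{-1}$, but in \eqref{eq:def-of-Q-fd-0} the sum is over $\alpha\in\sroots_{<0}$, and the nonvanishing condition (since $\schi=(f|\,?\,)$ with $f\in\sg_{-1}$) forces $J_{-\alpha}\in\sg_{1}$, i.e.\ $\alpha\in\sroots_{-1}$ --- so your conclusion is right but be careful with the sign conventions when you write it up. Also, the claim that evenness of the grading is \emph{needed} for the Koszul argument is overstated: the BRST definition of $\Wfin$ works for any good grading (see \cite{DAnDe-De-07}), and the evenness hypothesis in this paper is imposed for other reasons (the Ramond-twisting story), not for Theorem~\ref{Th:BRST-finite} itself.
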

For a $\fing$-module $M$,
 $M \* \Lam(\fing_{<0})$ is naturally
a $U(\fing)\* \sCl$-module.
Therefore
the algebra $H_0(U(\fing)\* \sCl)$
naturally acts on $H_{\bullet}^{\Lie}(\fing, M\* \C_{\schi_-})$.
As in the same manner as \cite{Ara07},
it follows that 
the action of $\Zh_H(\Wg)$ on $\BRST_{\bullet}(M)_{\tp}$
coincides with the action of 
$H_0(U(\fing)\* \sCl)$ on the space
$H_{\bullet}^{\Lie}(\fing, M_{\tp}\* \C_{\schi_-})$,
via the isomorphisms (\ref{eq:iso-top})
and (ii) of Theorem \ref{Th:BRST-finite}.

\section{Representation Theory of Affine $W$-Algebras
via the BRST Cohomology Functor}
\subsection{The Vanishing of the Lie Algebra Homology
}
Recall the notation from \S \ref{subsection:sl2-triple} and 
\S \ref{subsection:Root-Data}.

Let $\sL(\slam)$ be the irreducible highest weight
representation of $\fing$
with highest weight $\slam\in \dual{\sh}$.
 
Let
$\sBGG$ be the 
full subcategory of  
the category
of finitely generated left $\sg$-modules 
consisting of objects $M$
such that 
(1) $\dim U(\sn_+)m<\infty $ for all $m\in M$,
(2) $\sh$ acts semisimply on $M$,
(3) $M$ is a direct sum of 
 finite-dimensional
$\sg_0$-modules.

Set 
\begin{align}
\sP_0^+=\{\slam\in \dual{\sh};
\bra \slam,\alpha\che\ket\in \Z_{\geq 0}
\text{ for all $\alpha\in \sroots_{0,+}$}\}.
\end{align}
For $\slam\in \sP_0^+$
put
$\sM_0(\slam)=U(\sg)\*_{U(\sg_{\geq 0})}\sE(\slam)$,
where $\sE(\slam)$ is the irreducible 
finite-dimensional representation
of $\sg_0$ with  highest weight $\slam$,
considered as a $\sg_{\geq 0}$-module on which $\sg_{>0}$
acts trivially.
The 
$\sM_0(\slam)$ has
$\sL(\slam)$ as its unique simple quotient.
Every simple object of $\sBGG$
is isomorphic to exactly one of the
$\sL(\slam)$ with $\slam\in \sP_0^+$.

For a finitely generated 
$\sg$-module $M$
let $\Dim M$ be the Gelfand-Kirillov dimension of $M$.
By (\ref{eq:d=half-of-dim-of-orbit}),
one has
\begin{align}
\Dim M\leq 
d_{\schi}
\end{align}
 for all $M\in \sBGG$.

Set
\begin{align}
\sH_{\bullet}(M)=H^{\Lie}_{\bullet}(\fing_{<0},
M\* \C_{\schi_-}).
\end{align} 
One sees that $\sH_0(M)$
is finite-dimensional
for any object $M$ of $\sBGG$ as in  Lemma 2.5.1 of \cite{Ara07}.
From \S \ref{subsection:Zhu-vs-finiteW}
it follows that  the correspondence
$M\mapsto \sH_0(M)$
defines a functor from $\sBGG$
to $\fdW$,
 the category of finite-dimensional
representations of $\Wfin$.

\begin{Th}[{Matumoto \cite{Mat90}}]\label{Th:exact-fd}
$ $

\begin{enumerate}
 \item  The functor $\sH_0(?): \sBGG\ra \fdW$
is exact.
\item 
Let
 $M$ be an object of $ \sBGG$.
One has
$\sH_0(M)\ne 0$
if and only if $\Dim M=d_{\chi}$.
\end{enumerate}\end{Th}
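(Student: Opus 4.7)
The plan is to reduce part (i) to generalized Verma modules, where $\sH_\bullet$ can be computed by freeness, and to handle part (ii) via the associated-variety formalism.

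For part (i), the first observation is that $\schi_-$ is a Lie algebra character of $\fing_{<0}$: if $x\in\fing_{-i}$ and $y\in\fing_{-j}$ with $i,j\ge 1$, then $[x,y]\in\fing_{-i-j}$ with $i+j\ge 2$, whereas $\schi_-$ is supported on the $(-1)$-graded component by orthogonality under $(\cdot|\cdot)$ (since $f\in\fing_{-1}$ and the form is $x_0$-graded). Consequently $x\mapsto x+\schi_-(x)\cdot 1$ ($x\in\fing_{<0}$) extends to an algebra automorphism of $U(\fing_{<0})$, and pullback along it identifies $M\otimes\C_{\schi_-}$ with the untwisted $\fing_{<0}$-module $M$. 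For a generalized Verma module $\sM_0(\slam)$, PBW gives $\sM_0(\slam)\cong U(\fing_{<0})\otimes\sE(\slam)$ freely as a left $U(\fing_{<0})$-module, so $\sM_0(\slam)\otimes\C_{\schi_-}$ is again $U(\fing_{<0})$-free. Hence its Chevalley-Eilenberg complex is acyclic in positive degree, giving $\sH_i(\sM_0(\slam))=0$ for $i>0$ and $\sH_0(\sM_0(\slam))\cong\sE(\slam)$. Since every $M\in\sBGG$ admits a resolution by direct sums of generalized Verma modules (standard in the highest-weight category $\sBGG$), a spectral-sequence argument propagates the vanishing of $\sH_{>0}$ to all $M\in\sBGG$; exactness of $\sH_0$ is then immediate from $\sH_1\equiv 0$.

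For part (ii), choose a good $\Z_{\ge 0}$-filtration $\{F_pM\}$ of $M$ compatible with the PBW filtration of $U(\fing)$, so $\gr M$ is a finitely generated $S(\fing)$-module whose support in $\fing^*\cong\fing$ (via $(\cdot|\cdot)$) is the associated variety $V(M)$ of dimension $\Dim M$. Tracking the twist $\schi_-$ through a Rees-type deformation, the non-vanishing of $\sH_0(M)$ becomes equivalent to $V(M)\cap(f+\fing_{\ge 0})\ne\emptyset$, where $f+\fing_{\ge 0}$ is the affine preimage under $\fing^*\cong\fing$ of the extension of $\schi_-$ to $\fing^*$ by zero. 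This slice has codimension $d_{\schi}=\dim\fing_{<0}$ in $\fing$, so a non-empty intersection has dimension at least $\Dim M-d_{\schi}$, which forces $\Dim M\ge d_{\schi}$ whenever $\sH_0(M)\ne 0$. Combined with the opposite inequality recorded in \eqref{eq:d=half-of-dim-of-orbit}, this establishes the ``only if'' direction.

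The main obstacle is the converse implication, $\Dim M=d_{\schi}\Rightarrow\sH_0(M)\ne 0$. Here I would invoke the structural fact that for any $M\in\sBGG$, $V(M)$ is a union of closures of nilpotent coadjoint orbits in $\fing^*$, and that the dimension bound $d_{\schi}$ is saturated exactly when $\overline{\mathbb{O}_{\schi}}\subseteq V(M)$. Transversality of the affine slice $f+\fing_{\ge 0}$ to $\overline{\mathbb{O}_{\schi}}$ at $f$ (which in the good even grading setup follows from the $\mathfrak{sl}_2$-decomposition together with the exact sequence \eqref{eq:exact-gf}) then guarantees a non-empty intersection $V(M)\cap(f+\fing_{\ge 0})$, from which a Nakayama-style lifting argument recovers a nonzero element of $\sH_0(M)$ out of a non-zero symbol along this intersection. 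This geometric step — the structure of associated varieties for modules in $\sBGG$ together with the transverse-slice argument — is the genuinely non-trivial part and forms the heart of Matumoto's theorem.
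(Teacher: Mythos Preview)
The paper does not prove this theorem; it is quoted from Matumoto \cite{Mat90} and used as a black box. In fact the paper's logic runs in the \emph{opposite} direction from yours: it takes the exactness of $\sH_0$ (Theorem~\ref{Th:exact-fd}~(i)) as input and then deduces the vanishing of $\sH_i$ for $i\ne 0$ (Theorem~\ref{Th:vanishing-finite}) from it, using that projectives in $\sBGG$ are free over $U(\sg_{<0})$.

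Your argument for (i) contains a genuine gap at the ``spectral-sequence'' step. You correctly show $\sH_{>0}(\sM_0(\slam))=0$ and hence $\sH_{>0}(P)=0$ for every projective $P$ in $\sBGG$. But this does \emph{not} propagate to arbitrary $M$. Concretely, take a projective resolution $P_\bullet\to M$ and form the double complex $P_p\otimes\bigwedge^q(\sg_{<0})$. The spectral sequence obtained by first taking $\sH$-homology collapses (since $\sH_{>0}(P_p)=0$) and identifies $\sH_n(M)$ with $H_n\bigl(\sH_0(P_\bullet)\bigr)$. This is just the statement that $\sH_n$ is the $n$-th left derived functor of $\sH_0$ on $\sBGG$; the right-hand side is zero for $n>0$ \emph{if and only if} $\sH_0$ is exact, which is precisely what you are trying to prove. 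Equivalently: for $0\to K\to P\to M\to 0$ with $P$ projective, the long exact sequence gives $\sH_1(M)=\ker\bigl(\sH_0(K)\to\sH_0(P)\bigr)$, and you have no independent reason for this map to be injective. The exactness of the Whittaker coinvariant functor is a genuinely non-trivial fact---it is the content of Matumoto's theorem (and is nowadays often deduced from Skryabin's equivalence)---and cannot be obtained by the acyclicity-of-projectives argument you give.

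Your sketch for (ii) via associated varieties and the transverse slice $f+\sg_{\geq 0}$ is in the right spirit, and you correctly flag the converse as the hard direction requiring real input about the structure of $V(M)$ for $M\in\sBGG$.
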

Because every projective object of $\sBGG$ is free over $U(\sg_{<0})$,
the following  assertion
follows from (i) of Theorem \ref{Th:exact-fd}
in the same manner as
Theorem 2.5.6 of 
 \cite{Ara07}.
\begin{Th}\label{Th:vanishing-finite}
One has
$\sH_i(M)=0$ for all $i\ne 0$
and for all $M\in \sBGG$.
\end{Th}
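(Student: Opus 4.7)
The plan is to compute $\sH_\bullet(M)$ via a projective resolution in $\sBGG$, combining the $U(\sg_{<0})$-freeness of projectives with the exactness of $\sH_0$ provided by Theorem \ref{Th:exact-fd}(i); this mirrors the proof of Theorem 2.5.6 of \cite{Ara07}.

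First, I would choose a projective resolution $P_\bullet \twoheadrightarrow M \to 0$ in $\sBGG$. Projective covers in $\sBGG$ are built as quotients of generalized Verma modules $\sM_0(\slam) = U(\sg) \otimes_{U(\sg_{\geq 0})} \sE(\slam)$, and the PBW decomposition $U(\sg) \cong U(\sg_{<0}) \otimes U(\sg_{\geq 0})$ forces each $\sM_0(\slam)$, and hence every projective in $\sBGG$, to be free as a left $U(\sg_{<0})$-module. Consequently, for each $P_p$ the $\sg_{<0}$-module $P_p \otimes \C_{\chi_-}$ is itself $U(\sg_{<0})$-free (the twist by the character $\chi_-$ is a filtered algebra automorphism of $U(\sg_{<0})$), so the Chevalley--Eilenberg complex $P_p \otimes \bigwedge\nolimits^\bullet(\sg_{<0}) \otimes \C_{\chi_-}$ is exact in positive degrees, i.e.\ $\sH_j(P_p) = 0$ for $j > 0$.

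Next, I would form the double complex $C_{p,q} = P_p \otimes \bigwedge\nolimits^q(\sg_{<0}) \otimes \C_{\chi_-}$ with the resolution differential in the $p$-direction and the Chevalley--Eilenberg differential in the $q$-direction, and apply the two standard spectral sequences of this double complex. Running $p$-homology first: since $P_\bullet \to M$ is exact, the $E^1$-page concentrates in the column $p=0$ as $M \otimes \bigwedge\nolimits^\bullet(\sg_{<0}) \otimes \C_{\chi_-}$, so the total homology is $\sH_\bullet(M)$. Running $q$-homology first: by the previous paragraph the $E^1$-page reduces to the row $q=0$ with entries $\sH_0(P_p)$, and the exactness of $\sH_0$ from Theorem \ref{Th:exact-fd}(i) yields $E^2_{p,0} = 0$ for $p > 0$ and $E^2_{0,0} = \sH_0(M)$, so the total homology is $\sH_0(M)$ concentrated in degree zero. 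Comparing the two computations gives $\sH_i(M) = 0$ for $i \neq 0$, as required.

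The main obstacle is verifying the $U(\sg_{<0})$-freeness of projectives in $\sBGG$. While this is the standard parabolic-BGG type phenomenon, the hypotheses defining $\sBGG$ (local $\sn_+$-finiteness together with $\sg_0$-semisimplicity and the direct-sum-of-finite-dimensional structure over $\sg_0$, rather than full $\sg_{\geq 0}$-local finiteness) warrant a direct check that projective covers exist in $\sBGG$ and can be realized as quotients of $\sM_0(\slam)$; granting this, the PBW argument delivers freeness at once, and the spectral-sequence comparison above completes the proof.
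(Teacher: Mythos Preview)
Your approach is essentially the paper's: projectives in $\sBGG$ are $U(\sg_{<0})$-free, hence $\sH$-acyclic, and combining this with the exactness of $\sH_0$ from Theorem~\ref{Th:exact-fd}(i) via a resolution/double-complex comparison gives the vanishing, exactly as in Theorem~2.5.6 of \cite{Ara07}. One wording issue: projective objects of $\sBGG$ are not quotients of single generalized Verma modules but rather admit standard filtrations with subquotients $\sM_0(\slam)$; freeness over $U(\sg_{<0})$ then follows because each $\sM_0(\slam)$ is free and the short exact sequences in the filtration split over $U(\sg_{<0})$ (the quotient term being free, hence projective). With that correction the argument is the same as the paper's.
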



\subsection{Representations of 
Finite $W$-Algebras in Type $A$}
\label{subsection:type-A-fd}
In \cite{BruKle05},
Brundan and Kleshchev  
gave a complete description of
irreducible finite-dimensional representations of
 $\Wfin$
in
type $A$,
as we recall below:

Let $\sg=\mathfrak{sl}_n(\C)$.
As usual,
we write
$\sroots=\{\alpha_{i,j}; 1\leq i,j\leq n\}$,
$\sroots_+=\{\alpha_{i,j}; 1\leq i<j\leq n\}$.

Let $Y_f$ be the partition $(p_1\leq p_2\leq \dots \leq p_r)$ of $n$
corresponding to the nilpotent element $f$.
Following  \cite{BruKle05},
we identify $Y_f$ with the Young diagram
with $p_i$ boxes in the $i$th row,
and
number the boxes of $Y_f$
by $1$,
$2$,
\dots , $n$ down columns from left to right.
The corresponding good grading is defined so that
\begin{align}
\sroots_{0}=\left\{\alpha_{i,j};
\text{the $i$th and the $j$th boxes belong to the same column} 
\right\}
\end{align}
(see \cite{ElaKac05,BruKle05} for details).
Let 
\begin{align}
 \sroots^f=\{\alpha\in \sroots;
\alpha(h)=0\ \forall h\in \sh^f\},
\end{align}
$\sroots^f_+=\sroots^f\cap \sroots_+$.
It is easy to see that
\begin{align*}
 \sroots^f=\{\alpha_{i,j}\in \sroots;
\text{the $i$th and the $j$th boxes belong to the same row}\}.
\end{align*}
Let
\begin{align}
 \sW^f=\{w\in \sW; w(h)=h\ \forall h\in \sh^f\}.
\end{align}
Then $\sW^f$
is the subgroup of
$\bar W=\mathfrak{S}_n$ 
generated by $s_{\alpha}$ with $\alpha\in \sroots^f$.

\begin{Th}[{Brundan and Kleshchev \cite{BruKle05},
$\sg=\mathfrak{sl}_n(\C)$}]
\label{Th:Brundan-Kleshchev}
$ $

\begin{enumerate}
 \item For $\slam \in \sP_0^+$, 
$\sH_0(\sL(\slam))\ne 
 0$ if and only if
$\bra \slam+\bar\rho,\alpha\che\ket
\not\in \N$
for all $\alpha\in \sroots^f_+$.
In this case 
$\sH_0(\sL(\slam))$ is  irreducible.
Further, any irreducible finite-dimensional representation
of
$\Wfin$ arises in this way.
\item 
Nonzero 
$\sH_0(\sL(\slam))$ and $\sH_0(\sL(\bar {\mu}))$,
with  $\slam, \bar{\mu}\in \sP_0^+$,
are isomorphic
if and only if $\bar \mu+\bar\rho\in \sW^f(\slam+\bar \rho)$.
\end{enumerate}
\end{Th}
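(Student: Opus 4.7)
The plan is to reduce the statement to Theorem \ref{Th:exact-fd} combined with the Skryabin equivalence and the type~$A$ classification of primitive ideals. First, by Theorem \ref{Th:exact-fd}(ii), $\sH_0(\sL(\slam))\neq 0$ is equivalent to $\Dim \sL(\slam)=d_\chi$. Since $\Dim \sL(\slam)=\tfrac12\dim V(\Ann \sL(\slam))$, and since by Joseph and Borho--Brylinski the associated variety of a primitive ideal in $U(\mathfrak{sl}_n)$ is the closure of a single nilpotent orbit, this in turn is equivalent to $V(\Ann \sL(\slam))=\overline{\mathbb{O}_\schi}$.

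The next step is to translate this geometric condition on $\Ann \sL(\slam)$ into the combinatorial condition $\bra \slam+\sp,\alpha\che\ket\not\in\N$ for all $\alpha\in \sroots^f_+$. I would do this by going to an integrally dominant representative of $\slam+\sp$ via the translation principle, reducing the question to the integral case and then to the classification of primitive ideals of maximal Gelfand--Kirillov dimension attached to $\mathbb{O}_\schi$. In type $A$, Joseph's Robinson--Schensted description shows that the orbit attached to $\Ann \sL(\slam)$ has shape equal to the transpose of $Y_f$ precisely when the integrality pattern of $\slam+\sp$ along the root subsystem $\sroots^f$ fails as prescribed; this is exactly the non-integrality condition stated in the theorem.

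For irreducibility and exhaustion (the remaining parts of (i)), I would invoke Skryabin's equivalence, which identifies $\Wfin\Mod$ with the category of Whittaker $\sg$-modules of type $\schi$, under which $\sH_0(?)$ becomes the functor $M\mapsto \mathrm{Wh}_\schi(M)$ of Whittaker vectors. Standard Kostant/Lynch arguments show that $\mathrm{Wh}_\schi(\sL(\slam))$, when nonzero, is simple, because $\sL(\slam)$ has the minimal possible Gelfand--Kirillov dimension among Whittaker modules with this annihilator. Every finite-dimensional simple $\Wfin$-module has annihilator a primitive ideal of $U(\sg)$ with associated variety $\overline{\mathbb{O}_\schi}$ (by a theorem of Premet), and in type $A$ every such primitive ideal is realized as $\Ann \sL(\slam)$ for some $\slam\in\sP_0^+$ of the above form; this yields the surjectivity onto isomorphism classes.

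Finally, for (ii), observe that the isomorphism class of $\sH_0(\sL(\slam))$ depends on $\slam$ only through $\Ann \sL(\slam)$, and that two simple highest weight modules in category $\sBGG$ share a common annihilator of maximal GK-dimension precisely when their highest weights lie in a common orbit of the stabilizer of $\mathbb{O}_\schi$ in the integral Weyl group; in type $A$, a direct computation with the column-row structure of $Y_f$ identifies this stabilizer with $\sW^f$, giving the orbit condition $\bar\mu+\sp\in \sW^f(\slam+\sp)$. The principal obstacle throughout is the type~$A$-specific step where the GK-dimension condition is matched with the coroot-pairing condition along $\sroots^f$; this is where Brundan--Kleshchev's detailed tableau combinatorics (or equivalently the shifted Yangian presentation of $\Wfin$) does the real work.
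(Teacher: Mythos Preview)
The paper does not prove this theorem: it is stated as a result of Brundan and Kleshchev \cite{BruKle05} and used as a black box. So there is no ``paper's own proof'' to compare against; your proposal is a sketch of how the cited result might be established.

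As a sketch, your outline has the right architecture but two genuine gaps. First, the irreducibility step: you write that ``standard Kostant/Lynch arguments show that $\mathrm{Wh}_{\schi}(\sL(\slam))$, when nonzero, is simple.'' This is not standard. Kostant's argument applies to the principal nilpotent, where Whittaker vectors are one-dimensional; for general $f$ the simplicity of $\sH_0(\sL(\slam))$ is precisely the hard content of the theorem, and it does not follow from GK-dimension minimality alone. In Brundan--Kleshchev's actual proof this is obtained from the shifted Yangian presentation of $\Wfin$ and its highest weight theory, not from Skryabin's equivalence plus a soft argument. Second, in part (ii) you assert that the isomorphism class of $\sH_0(\sL(\slam))$ depends only on $\Ann \sL(\slam)$. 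This is true in type $A$, but it is essentially equivalent to the theorem you are trying to prove (it amounts to the bijectivity of the map from simple finite-dimensional $\Wfin$-modules to primitive ideals with associated variety $\overline{\mathbb{O}_{\schi}}$); it is not an input you can take for granted.

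You are right that the nonvanishing criterion can be phrased via Theorem~\ref{Th:exact-fd}(ii) and the Joseph/Robinson--Schensted description of associated varieties, and you honestly flag that matching $\Dim \sL(\slam)=d_{\schi}$ with the coroot condition along $\sroots^f_+$ is where the real combinatorics lies. But as written, the proposal defers both the irreducibility and the classification steps back to \cite{BruKle05}, so it does not constitute an independent proof. (Minor point: you write $\slam+\sp$ several times where you mean $\slam+\bar\rho$; in this paper $\sp$ denotes $\bar{\mathfrak{p}}$.)
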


\subsection{The Category $\affBGG$ of $\affg$-Modules}

Recall the notation from \S \ref{subsection:Kac-Moody}.

For $\lam\in \dual{\h}$
let $L(\lam)$ be the irreducible representation
of $\affg$ with  highest weight $\lam$.

Let
$\affBGG$ be the
full subcategory of the category of left $\affg$-modules
consisting of objects $M$
such that the following hold:
\begin{itemize}
 \item  $K$ acts as the multiplication by $k$ on $M$;
\item  $M$ admits a weight space decomposition;
\item
 there exists
a finite subset
$\{\mu_1,\dots,\mu_n\}$
of $
\dual{\h}_{k}$
such that
$M=\bigoplus\limits_{\mu\in \bigcup_i \mu_i-Q_+}M^{\mu}$;
\item for each $d\in \C$,
$M_d$ is an
 objects of $\sBGG$
as $\fing$-modules.
\end{itemize}
Set
\begin{align}
 \affP=\{\lam\in \dual{\bh}_k;\slam\in \sP_0^+,
\ \bra \lam,K\ket=k\}.
\end{align}
For $\lam\in \affP$,
let
\begin{align}
 M_0(\lam)=U(\affg)\*_{U(\sg[t]\+ \C K\+ \C D)
}\sM_0(\slam),
\end{align}
where
$\sM_0(\slam)$ is considered as a $\sg[t]\+ \C K \+ \C D$-module
on which $\sg[t]t$ acts trivially,
and $K$ and $D$ act the multiplication
by $\bra \lam,K\ket$ and $\bra \lam,D\ket$,
respectively.
The $M_0(\lam)$ is an object of $\affBGG$,
and has $L(\lam)$ as its
unique simple quotient.
Every irreducible object of 
$\BGG_k$ is isomorphic to exactly one of 
the $L(\lam)$ with $\lam\in \affP$.

The correspondence
$M\mapsto M^*$
defines a duality functor on $\affBGG$.
Here, $\affg$ acts on $M^*$
by
\begin{align}
(Xf)(v)=f(X^tv)
\label{eq:2006-10-16-00-24}
\end{align}
where
$X\mapsto X^t$ is  the 
anti-automorphism of $\affg$ define
by 
$K^t=K$, $D^t=D$ and $J(n)^t=J^t(-n)$ for $J\in \sg$, $n\in \Z$.

Clearly, $(M^*)^*=M$ for $M\in \affBGG$.
It follows that
 $L(\lam)^*=L(\lam)$.

Let
$\affBGG^{\tri}$
be the full subcategory of $\affBGG$
consisting of objects $M$ that admit 
a finite filtration 
$M=M_0\supset M_1\supset \dots \supset M_r=0 $
such that each successive subquotient $M_i/M_{i+1}$
is isomorphic to
some generalized Verma module $M_0(\lam_i)$ with $\lam_i\in \affP$.
Dually,
let $\affBGG^{\bigtriangledown}$  be 
the full subcategory of $\BGG_k$
consisting of objects $M$ such that
$\dual{M}\in \mathbb{O}bj\BGG^{\tri}_k$.

For $\lam\in \affP$,
let
$\affBGG^{\leq \lam}$
be the Serre full subcategory of $\affBGG$
consisting of objects $M$ 
such that
$M=\bigoplus\limits_{\mu\leq \lam}M^{\mu}$.
It is well-known (see e.g., \cite{Soe98})
that every $L(\mu)$
that lies in $\affBGG^{\leq \lam}$
admits the indecomposable projective
cover $P_{\leq \lam}(\mu)$
in $\affBGG^{\leq \lam}$,
and hence,
every finitely generated object in $\affBGG^{\leq \lam}$
is an image of a projective object
of the form
$\bigoplus_{i=1}^{r}
P_{\leq \lam}(\mu_i)$.
The
$P_{\leq \lam}(\mu)$
is an object of 
$
\affBGG^{\tri}$.
Dually,
$I_{\leq \lam}(\mu)=P_{\leq \lam}(\mu)^*$
is
the injective envelope of $L(\mu)$
in $\affBGG^{\leq \lam}$.

\subsection{The ``Top'' Part of the 
BRST Cohomology}
\label{subsection:top-part-of-BRST}
Let $M$ be an object of $\affBGG$.
Clearly, $M_{\tp}$ is a $\sg$-submodule of $M$.
By Theorem \ref{Th:vanishing-finite},
$\sH_i(M_{\tp})=0$ for all $i>0$,
and $\sH_0(M_{\tp})$ is a
finite-dimensional  
$\Wfin$-module.

The following assertion follows 
from 
Theorem \ref{Th:vanishing-finite}.
\begin{Lem}\label{lem:top-identical}
 Let $M$ be an object of $\affBGG$.
Assume that $\sH_{\bullet}(M_{\tp})\ne 0$.
Then 
one has
the following
 isomorphism of  $\Wfin$-modules:
\begin{align*}
\BRST_{i}(M)_{\tp}\cong \begin{cases}
		      \sH_{0}(M_{\tp})&
\text{for $i=0$}\\
0&
\text{for $i\ne 0$}.
		     \end{cases}
\end{align*}
\end{Lem}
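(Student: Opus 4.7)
The strategy is to exploit the $-D$-grading on the BRST complex to reduce the computation to the classical Chevalley--Eilenberg complex for $\sH_\bullet(M_{\tp})$, and then apply Theorem \ref{Th:vanishing-finite}.

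Since $D$ commutes with $\affQ{-}$, the complex $(M\otimes \semiLamp{\bullet},\affQ{-})$ decomposes as a direct sum of subcomplexes indexed by $-D$-eigenvalues. For $M\in \affBGG$, the $-D$-spectrum of $M$ is bounded below; let $d_0$ denote its minimum, so that $M_{d_0}=M_{\tp}$. The $-D$-spectrum of $\semiLamp{\bullet}$ is bounded below by $0$ and attains this minimum precisely on the subspace generated from $\1$ by the zero-mode creation operators $\psi_{\alpha}(0)$ with $\alpha\in \sroots_{<0}$; every other creation operator strictly raises $-D$. Since for $n\ne 0$ one of the factors $M_{d_0+n}$ or $(\semiLamp{\bullet})_{-n}$ vanishes, one obtains
\[
(M\otimes \semiLamp{\bullet})_{d_0}=M_{\tp}\otimes \bigwedge\nolimits^{\bullet}(\sg_{<0}),
\]
which is therefore a subcomplex of $(M\otimes \semiLamp{\bullet},\affQ{-})$.

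The next step is to verify that $\affQ{-}$ restricted to this subcomplex coincides with the classical differential $\sQ$ of \eqref{eq:def-of-Q-fd-0}. For each quadratic term $J_{\alpha}(-n)\psi_{-\alpha}(n)$ of $\affQ{-}^{\st}$ ($\alpha\in\sroots_{<0}$, $n\in\Z$): modes with $n>0$ act trivially on the top because $\psi_{-\alpha}(n)$ commutes past the zero-mode generators of $\bigwedge^\bullet(\sg_{<0})$ and then annihilates $\1$, while modes with $n<0$ act trivially because $J_{\alpha}(-n)$ would land in $M_{d_0+n}=0$; only $n=0$ survives, giving $\sum_{\alpha}J_{\alpha}(0)\psi_{-\alpha}(0)$. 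An analogous analysis of the cubic term (together with the normal-ordering implicit in the vertex-algebra definition of $\affQ{-}^{\st}$) forces $k=l=0$, and the $\chi_-$ term is already supported in zero modes. One thus obtains exactly $\sQ$. Consequently
\[
\BRST_{i}(M)_{d_0}\cong H_i(M_{\tp}\otimes \bigwedge\nolimits^{\bullet}(\sg_{<0}),\sQ)=\sH_i(M_{\tp}),
\]
which vanishes for $i\ne 0$ by Theorem \ref{Th:vanishing-finite}. The hypothesis $\sH_\bullet(M_{\tp})\ne 0$ then gives $\sH_0(M_{\tp})\ne 0$, so $\BRST_0(M)_{d_0}\ne 0$; hence $d_0$ is the minimum $-D$-eigenvalue of $\BRST_\bullet(M)$, whence $\BRST_\bullet(M)_{\tp}=\BRST_\bullet(M)_{d_0}\cong \sH_\bullet(M_{\tp})$, as required.

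\textbf{Main obstacle.} The delicate point is identifying the restriction of the cubic part of $\affQ{-}^{\st}$ with the classical bracket term of $\sQ$: a priori, infinitely many mode-pairs $(k,l)$ with $k+l\le 0$ could contribute Wick contractions that preserve the top $-D$-piece, and one must verify that the normal-ordered expression in the vertex-algebra definition of $\affQ{-}^{\st}$ eliminates all such corrections. Once this reduction is in place, the role of the hypothesis $\sH_\bullet(M_{\tp})\ne 0$ (which merely excludes the possibility that $\BRST_\bullet(M)_{\tp}$ actually sits at some $d>d_0$) is clear, and the vanishing in degrees $i\ne 0$ follows immediately from Theorem \ref{Th:vanishing-finite}.
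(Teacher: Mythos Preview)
Your argument is correct and follows essentially the same route as the paper. The paper established the identification $\BRST_{\bullet}(M)_{\tp}\cong \sH_{\bullet}(M_{\tp})$ earlier in \S\ref{subsection:Zhu-vs-finiteW} (see \eqref{eq:iso-top} and the discussion preceding it, where the reduction of $\affQ{-}$ to $\sQ$ on the top piece is carried out), so at the point of the lemma the proof is the one-line ``follows from Theorem~\ref{Th:vanishing-finite}''; you have simply reproduced that reduction in situ and then invoked the same vanishing theorem, with the hypothesis $\sH_{\bullet}(M_{\tp})\ne 0$ used exactly as the paper uses it, to guarantee $\BRST_{\bullet}(M)_{d_0}\ne 0$ so that $d_0$ really is the top degree.
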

The following assertion follows  from 
Theorems \ref{Th:exact-fd},
\ref{Th:vanishing-finite} and 
Lemma \ref{lem:top-identical}.
\begin{Pro}
 \label{Pro:BRST-top}
One has
\begin{align*}
&\BRST_{i}(M_0(\lam))_{\tp}\cong \begin{cases}
		      \sH_{0}(\sM_0(\slam))&
\text{for $i=0$}\\
0&
\text{for $i\ne 0$},
		     \end{cases}\\
&\BRST_{i}(M_0(\lam)^*)_{\tp}\cong \begin{cases}
		      \sH_{0}(\sM_0(\slam)^*)&
\text{for $i=0$}\\
0&
\text{for $i\ne 0$},
		     \end{cases}
\end{align*}
and if  $\Dim  \sL(\slam)=d_{\schi}$, then
\begin{align*}
\BRST_{i}(L(\lam))_{\tp}\cong \begin{cases}
		      \sH_{0}(\sL({\slam}))&
\text{for $i=0$}\\
0&
\text{for $i\ne 0$.}
		     \end{cases}
\end{align*}
\end{Pro}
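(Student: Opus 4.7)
The plan is to derive all three formulas from Lemma \ref{lem:top-identical} by identifying the top component of each module with a familiar object of $\sBGG$, and then checking the nonvanishing hypothesis $\sH_{\bullet}(M_{\tp})\ne 0$ via Theorem \ref{Th:exact-fd}(ii).

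First I would pin down the top components. By PBW applied to the complement of $\sg[t]\oplus \C K\oplus \C D$ in $\affg$, one has $M_0(\lam)\cong U(\sg\otimes t^{-1}\C[t^{-1}])\otimes \sM_0(\slam)$ as a vector space, and every element of $\sg\otimes t^{-1}\C[t^{-1}]$ strictly increases the $d$-degree (since $[D,X(-n)]=-nX(-n)$ for $n\geq 1$). Hence $M_0(\lam)$ is supported on $d_0+\Z_{\geq 0}$ where $d_0=-\bra\lam,D\ket$, and $M_0(\lam)_{\tp}=M_0(\lam)_{d_0}=\sM_0(\slam)$, concentrated in one degree. Since $K^t=K$ and $D^t=D$, graded duality preserves the $d$-grading on each eigenspace, so $(M_0(\lam)^*)_{\tp}=\sM_0(\slam)^*$, again in degree $d_0$. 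Finally, $L(\lam)$ is a quotient of $M_0(\lam)$, so its top is $\sL(\slam)$ in degree $d_0$.

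Next I would verify the nonvanishing hypothesis in each case. Because $\sM_0(\slam)\cong U(\sg_{<0})\otimes \sE(\slam)$ with $\sE(\slam)$ finite-dimensional, one has $\Dim \sM_0(\slam)=\dim \sg_{<0}=d_\schi$ by \eqref{eq:d=half-of-dim-of-orbit}. The graded dual has the same formal character and hence the same Gelfand--Kirillov dimension, so $\Dim \sM_0(\slam)^*=d_\schi$ as well. For $L(\lam)$, $\Dim \sL(\slam)=d_\schi$ is exactly the assumed hypothesis. Applying Theorem \ref{Th:exact-fd}(ii), we obtain $\sH_0(\sM_0(\slam))\ne 0$, $\sH_0(\sM_0(\slam)^*)\ne 0$ and $\sH_0(\sL(\slam))\ne 0$; combined with Theorem \ref{Th:vanishing-finite}, this gives $\sH_{\bullet}(M_{\tp})\ne 0$ concentrated in degree zero in each of the three cases.

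Now Lemma \ref{lem:top-identical} applies directly to $M=M_0(\lam)$, $M_0(\lam)^*$, and (under the dimension hypothesis) $L(\lam)$, yielding precisely the three asserted isomorphisms. There is no serious obstacle: the content is entirely packaged in the preceding results, and the only points requiring care are the identification of each top as a single $D$-eigenspace (so that the reduction to a Lie algebra homology over $\fing$ is valid) and the stability of GK dimension under graded duality.
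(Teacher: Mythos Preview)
Your proof is correct and follows exactly the route the paper indicates: the paper's proof is the single sentence ``follows from Theorems \ref{Th:exact-fd}, \ref{Th:vanishing-finite} and Lemma \ref{lem:top-identical}'', and you have simply unpacked those citations by identifying each $M_{\tp}$ explicitly and verifying the nonvanishing hypothesis of Lemma \ref{lem:top-identical} via the Gelfand--Kirillov dimension criterion in Theorem \ref{Th:exact-fd}(ii).
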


\subsection{The Vanishing and the Almost Irreduciblity}
\begin{Th}\label{Th:estimate}
Let $M$ be an object of $\affBGG$.
Then  $\BRST_{\bullet}(M)_d$ is finite-dimensional for all $d$.
If $M$ is an object of $\affBGG^{\leq \lam}$
then
$\BRST_i(M)_d=0$ unless $|i|\leq d-\bra \lam,D\ket$.
\end{Th}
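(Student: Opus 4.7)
The plan is to reduce the computation of $\BRST_\bullet(M)$ to a finite-dimensional Lie algebra homology computation via a Kazhdan-type filtration and a subsequent Koszul-type contraction, adapting the strategy of Section~4 of \cite{Ara07}.

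First I verify that each $\bh$-weight space of $C_\bullet \teigi M\otimes\semiLamp{\bullet}$ is finite-dimensional. For a weight $\mu$ of $C_\bullet$, any decomposition $\mu=\nu+\eta$ with $\nu$ a weight of $M$ and $\eta\in -Q_+$ a weight of $\semiLamp{\bullet}$ forces $\mu\leq\nu\leq\mu_i$ for some $\mu_i$ in the finite set from the definition of $\affBGG$; the lattice interval $[\mu,\mu_i]$ is finite, and each factor $M^{\nu}$ and $\semiLamp{\bullet}^{\eta}$ is finite-dimensional.

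Next I introduce the increasing filtration
\[
F_p C_\bullet \teigi \bigoplus_{\bra\lam-\mu,\,x_0\ket\leq p}C_\bullet^{\mu},
\]
where $\lam$ is an element dominating the weights of $M$ (such as the highest weight in $\affBGG^{\leq\lam}$, or the maximum of the finite set of top weights for general $M\in\affBGG$). Since $\affQ{-}^{\st}$ preserves $\bh$-weights and $\chi_-$ shifts weights by elements of $\sroots_{-1}$ (each with $x_0$-weight $-1$), the differential $\affQ{-}$ is filtration-preserving and $\gr_F\affQ{-}=\affQ{-}^{\st}$. The $-D$-grading refines $F_\bullet$, and the weight-space finiteness above ensures the associated spectral sequence converges to $\BRST_\bullet(M)$ degreewise in $-D$.

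The $E_1$-page is $H_\bullet(C_\bullet,\affQ{-}^{\st})$. Using the Clifford tensor decomposition
\[
\semiLamp{\bullet}\cong \bigwedge\nolimits^{\bullet}(\sg_{<0})\otimes \bigwedge\nolimits^{\bullet}(N),
\]
where $N$ is the span of the nonzero-mode creators $\psi_\alpha(n)$ with $n\leq -1$, a secondary Koszul-type contraction—pairing each nonzero-mode $\psi$ against the corresponding affine mode of $L\sg_{<0}$ acting on $M$, parallel to Section~4 of \cite{Ara07}—collapses the nonzero-mode factor. In each $-D$-degree $d$, the surviving $E_\infty$-contribution is the finite-dimensional Chevalley homology $\sH_\bullet(M_d)$ of the $\sBGG$-object $M_d$; by Theorem~\ref{Th:vanishing-finite} this is concentrated in degree $0$ and finite-dimensional, establishing the first claim. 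For $M\in\affBGG^{\leq\lam}$, the lower bound $M_{d_1}=0$ for $d_1<-\bra\lam,D\ket$, together with the fact that each surviving nonzero-mode $\psi$ contributes at least $1$ to $-D$, yields the charge estimate $|i|\leq d-\bra\lam,D\ket$ after passing through both spectral sequences.

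The main technical obstacle is the Koszul contraction on the $E_1$-page in this semi-infinite setting: one must show that the secondary spectral sequence converges $(-D)$-degreewise and that its $E_\infty$-page identifies with $\sH_\bullet(M_d)$. The interplay of the two filtrations with the $-D$-grading, made possible by the weight-space finiteness of the first step, is what allows the argument to go through, closely paralleling the principal-case treatment of \cite{Ara07}.
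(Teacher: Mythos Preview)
Your $x_0$-filtration step is what breaks the argument, in two ways. First, a sign slip: with $F_p=\{\mu:\bra\lam-\mu,x_0\ket\leq p\}$, the operator $\chi_-$ shifts $\mu$ by a root of $x_0$-weight $-1$, so it \emph{raises} $\bra\lam-\mu,x_0\ket$ by $1$ and sends $F_p$ into $F_{p+1}$; thus $\affQ{-}$ does not preserve your filtration. This is easily repaired by reversing the inequality. The serious problem is conceptual. Once you pass to $E_1=H_\bullet(C_\bullet,\affQ{-}^{\st})$, the twist $\chi_-$ is gone. Your secondary contraction then lands on the \emph{untwisted} Lie algebra homology $H_\bullet(\sg_{<0},M_d)$, not on $\sH_\bullet(M_d)=H_\bullet(\sg_{<0},M_d\*\C_{\schi_-})$. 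Theorem~\ref{Th:vanishing-finite} concerns only the twisted version and says nothing about the untwisted one; in fact $H_i(\sg_{<0},M_d)$ is typically nonzero throughout $0\leq i\leq\dim\sg_{<0}$. One can still salvage the finite-dimensionality claim (the untwisted homology is finite-dimensional for $M_d\in\sBGG$, by d\'evissage from parabolic Vermas, which are free over $U(\sg_{<0})$), but the charge estimate you then obtain is only $|i|\leq (d+\bra\lam,D\ket)+\dim\sg_{<0}$, not the sharp bound in the statement.

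The paper's proof avoids this by omitting the $x_0$-filtration entirely and applying the Hochschild--Serre spectral sequence for $\sg_{<0}\subset L\sg_{<0}$ directly to the \emph{full} differential $\affQ{-}$. The point is that $\chi_-$ is supported on zero modes, hence is a character of the subalgebra $\sg_{<0}$; so the inner step of Hochschild--Serre is exactly $\sH_\bullet(M|_{\fing})=\bigoplus_d\sH_\bullet(M_d)$, to which Theorem~\ref{Th:vanishing-finite} applies and which collapses to degree $0$. The outer step then involves only the nonzero-mode wedge, where each generator contributes at least $1$ to the $-D$-grading, yielding both finite-dimensionality and the sharp charge bound as in Theorem~7.4.2 of \cite{Ara07}. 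In short: do your ``secondary'' contraction first and only, keeping $\chi_-$ inside the $\sg_{<0}$-step.
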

\begin{proof}
By  Theorem \ref{Th:vanishing-finite}
one has
 $\sH_{i}(M|_{\fing})=0$
for all $i\ne 0$.
Therefore by considering the Hochschild-Serre spectral
sequence for 
$\sg_{<0}\subset L{\sg_{<0}}$,
the assertion follows
in the same manner as 
Theorem 7.4.2 of
\cite{Ara07}.
\end{proof}
Theorem \ref{Th:estimate}
in particular implies that
 $\BRST_{\bullet}(M)$ is an ordinary representations
for all $M\in \affBGG$. 
It follows that
one has the functor
\begin{align}
 \affBGG\ra \Wg\Mod_{\sigma_R},\quad M\ra \BRST_0(M).
\label{eq:--reduction-functor-ordinary}
\end{align}

\begin{Th}[\cite{KacWak04}]\label{Th:vanishing-Verma}
For $\lam\in \affPp$ one has the following:
\begin{enumerate}
 \item 
$\BRST_i(M_0(\lam))=0$ for all $i\ne 0$.
\item 
 $\BRST_0(M_0(\lam))$ is 
almost highest weight.
\end{enumerate}
\end{Th}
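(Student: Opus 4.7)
The plan is to reduce both statements to the finite Lie algebra homology $\sH_\bullet(\sM_0(\slam),\C_{\chi_-})$, which is already controlled by Theorems~\ref{Th:exact-fd} and \ref{Th:vanishing-finite}. The key structural input is the PBW decomposition
\begin{align*}
 M_0(\lam) \cong U(\sg \otimes t^{-1}\C[t^{-1}]) \otimes \sM_0(\slam)
\end{align*}
as $\C D$-graded vector spaces, with the ``top'' copy $\sM_0(\slam) = M_0(\lam)_{\tp}$ sitting in the lowest $-D$-eigenspace. Correspondingly, decomposing $L\sg_{<0}$ into its finite part $\sg_{<0}$ and its complementary loop part, and splitting the semi-infinite exterior algebra on $L\sg_{>0}$ in the same way, the BRST complex $M_0(\lam) \otimes \semiLamp{\bullet}$ admits a natural decreasing filtration $F_\bullet$ whose associated graded factors into the finite Chevalley--Eilenberg complex $(\sM_0(\slam) \otimes \Lam^\bullet(\sn_-),\sQ)$ on one side and a purely ``loop'' factor depending only on the affine Lie algebra structure on the other. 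Note that the character twist $\chi_-$ is supported on $t^0$-terms and so lives entirely in the finite factor.

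For part (i), the resulting spectral sequence has $E_1$-page obtained by first taking homology of the finite factor. By Theorem~\ref{Th:vanishing-finite} applied to $\sM_0(\slam)\in\sBGG$, this collapses to degree zero and gives
\begin{align*}
 E_1 \cong (\text{loop homology}) \otimes \sH_0(\sM_0(\slam)).
\end{align*}
The loop homology is then treated by the argument already used for the subcomplex $\bCg_-$ in Theorem~\ref{Th:KW}(i), and Theorem~\ref{Th:estimate} guarantees termwise convergence on each $D$-eigenspace. An Euler-characteristic comparison against the answer predicted by Theorem~\ref{Th:KW}(ii), namely $V^{\phi_k}(\sg^f) \otimes \sH_0(\sM_0(\slam))$, forces all higher differentials to vanish and yields $\BRST_i(M_0(\lam)) = 0$ for $i \neq 0$.

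For part (ii), the same spectral sequence identifies the associated graded of $\BRST_0(M_0(\lam))$ with $V^{\phi_k}(\sg^f) \otimes \sH_0(\sM_0(\slam))$ as a graded module over $\gr^F \Wg \cong V^{\phi_k}(\sg^f)$. Since $\sH_0(\sM_0(\slam))$ sits in the lowest $D$-degree and generates this associated graded, a standard lifting argument shows that $\BRST_0(M_0(\lam))$ is generated over $\Wg$ by its top part, which by Proposition~\ref{Pro:BRST-top} is $\sH_0(\sM_0(\slam))$, concentrated in a single $D$-eigenspace. This is precisely the almost-highest-weight property.

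The main obstacle will be setting up the filtration so that its $E_0$-differential genuinely decouples into the finite Chevalley--Eilenberg differential (carrying the twist $\chi_-$) and a loop differential, while preserving enough of the $\Wg$-module structure to support the lifting argument in part~(ii). Both issues are handled by filtering by total loop charge, essentially the same device as in the principal case \cite{Ara07}; the new ingredient over that setting is simply the use of Theorem~\ref{Th:vanishing-finite} in place of its principal counterpart.
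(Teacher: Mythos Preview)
The paper's proof is a one-liner: ``the same as that of Theorem~\ref{Th:KW}''. Concretely this means the Kac--Wakimoto argument of \cite{KacWak04}: one first passes from the full BRST complex to the subcomplex $C_+^\bullet(\lam)$ generated over the $\wJ_a(m)$, $\psi_\beta(n)$ (with $a\in\sI\sqcup\sroots_{\le 0}$, $\beta\in\sroots_{<0}$) by the highest-weight vector, exactly parallel to the decomposition $\bCg\cong\bCg_-\otimes\bCg_+$ of \S\ref{subsection:generating-field-of-W}; then one filters $C_+^\bullet(\lam)$ by $x_0$-weight (not by loop degree), as in the Remark following Theorem~\ref{Th:KW}. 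On $C_+^\bullet(\lam)$ the homological grading is one-sided, and the $E_1$-page of the $x_0$-spectral sequence is already concentrated in degree~$0$ by a direct computation, yielding both (i) and (ii) at once together with the freeness statement $\gr\BRST_0(M_0(\lam))\cong V^{\phi_k}(\sg^f)\otimes(\text{top})$. Crucially, this route does \emph{not} invoke Theorem~\ref{Th:vanishing-finite}; Matumoto's vanishing plays no role here.

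Your approach via a loop/finite factorization plus Matumoto is genuinely different, and the step ``the loop homology is then treated by the argument already used for $\bCg_-$'' is the weak point. The acyclicity of $\bCg_-$ comes from the contracting homotopy pairing each $\wJ_\alpha$ ($\alpha\in\sroots_{>0}$) against $\psi_\alpha$; that mechanism has nothing to do with the complex you are left with after taking twisted $\sg_{<0}$-coinvariants of $M_0(\lam)$. In particular those coinvariants are not simply $U(\sg[t^{-1}]t^{-1})\otimes\sH_0(\sM_0(\slam))$, since $\sg_{<0}$ acts nontrivially on the loop PBW factor, and the residual differential is not a priori of Koszul type. Your Euler-characteristic step then presupposes the answer. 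The Hochschild--Serre filtration you have in mind is exactly what the paper uses for the finite-dimensionality estimate of Theorem~\ref{Th:estimate}, but promoting it to the full vanishing for $M_0(\lam)$ requires the $x_0$-filtration on $C_+^\bullet(\lam)$, not a shortcut through $\bCg_-$.
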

(The proof of Theorem \ref{Th:vanishing-Verma} 
 is the same as that of 
Theorem \ref{Th:KW}.)
\begin{Th}\label{Th:Vanishing-verma-dual}
For $\lam\in \affPp$ one has the following:

 \begin{enumerate}
\item  $\BRST_i(M_0(\lam)^*)=0$ for all $i\ne 0$.
\item 
 $\BRST_0(M_0(\lam)^*)$ is almost co-highest weight.
 \end{enumerate}
\end{Th}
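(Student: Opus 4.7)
The strategy is to reduce Theorem \ref{Th:Vanishing-verma-dual} to Theorem \ref{Th:vanishing-Verma} via a contragredient duality on the BRST complex. The key ingredient is a natural isomorphism
\begin{align*}
\BRST_i(M^*) \cong \sfD(\BRST_i(M))
\end{align*}
of graded $\Wg$-modules for every $M \in \affBGG$, where $\sfD$ is the graded dual of \eqref{eq:graded-dual} taken with respect to the Hamiltonian $H$ of \eqref{eq:Hamitonian}. To construct it, I would build a perfect pairing of chain complexes
\begin{align*}
(M^* \* \semiLamp{\bullet}, \affQ{-}) \otimes (M \* \semiLamp{\bullet}, \affQ{-}) \to \C
\end{align*}
by combining the tautological pairing $M^* \otimes M \to \C$ with a Poincar\'{e}-type self-pairing on $\semiLamp{\bullet}$ (reflecting the self-duality of the space of semi-infinite forms as a $\bCl$-module, once the sea-level is fixed at charge zero). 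The crucial computation is that $\affQ{-} = \affQ{-}^{\st} + \chi_-$ is self-adjoint for this pairing: for $\affQ{-}^{\st}$ this is the semi-infinite analogue of the Chevalley-Eilenberg Poincar\'{e} duality, and for $\chi_-$ it follows from \eqref{eq:h-vs-x} together with the support of $\schi$ in $\sg_{-1}$ coming from the good grading. Passing to homology and using the finite-dimensionality of graded pieces (Theorem \ref{Th:estimate}) then yields the displayed isomorphism, which by construction respects both the $H$-grading and the $\Wg$-action.

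Granted this duality, part (i) is immediate: by Theorem \ref{Th:vanishing-Verma}(i) one has $\BRST_i(M_0(\lam)) = 0$ for $i \ne 0$, whence $\BRST_i(M_0(\lam)^*) \cong \sfD(0) = 0$.

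For part (ii), set $M = \BRST_0(M_0(\lam))$ and $\tilde M = \BRST_0(M_0(\lam)^*) \cong \sfD(M)$. Suppose $\tilde N \subset \tilde M$ is a graded $\Wg$-submodule with $\tilde N \cap \tilde M_{\tp} = 0$. Then the composition $\tilde M_{\tp} \hookrightarrow \tilde M \twoheadrightarrow \tilde M/\tilde N$ is injective, so dualizing gives that the submodule $\sfD(\tilde M/\tilde N) \hookrightarrow \sfD(\tilde M) = M$ surjects onto $\sfD(\tilde M_{\tp}) = M_{\tp}$ via the canonical projection of $M$ onto its lowest $H$-graded piece. Since $\sfD(\tilde M/\tilde N)$ is a graded subspace, surjecting onto $M_{\tp}$ via the graded projection forces $M_{\tp} \subset \sfD(\tilde M/\tilde N)$ as a subspace of $M$. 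By Theorem \ref{Th:vanishing-Verma}(ii), $M$ is generated over $\Wg$ by $M_{\tp}$, so $\sfD(\tilde M/\tilde N) = M$, and therefore $\sfD(\tilde N) = 0$, i.e.\ $\tilde N = 0$.

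The main obstacle will be the clean construction of the self-pairing on $\semiLamp{\bullet}$ and the verification of self-adjointness of $\affQ{-}$, most delicately for the character term $\chi_-$. That term involves only half of each root pair $\{\alpha, -\alpha\}$ and its compatibility with the pairing relies essentially on the good grading hypothesis $f \in \sg_{-1}$; the rest of the bookkeeping (signs, sea-level shifts, and compatibility of the $H$-grading on both sides, ensuring in particular that $\sfD$ preserves the top part) is routine but must be handled with care so that the isomorphism produced really is $\Wg$-equivariant.
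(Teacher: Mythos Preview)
There is a genuine gap in your proposed duality argument: the differential $\affQ{-}$ is \emph{not} self-adjoint under any natural pairing of the sort you describe. The anti-involution $t$ defining $M^*$ exchanges positive and negative root spaces, so when you transport $\affQ{-}$ across the pairing $M^*\otimes M\to\C$ combined with the Clifford self-pairing on $\semiLamp{\bullet}$, the term $J_{\alpha}(-n)\psi_{-\alpha}(n)$ with $\alpha\in\sroots_{<0}$ becomes $J_{-\alpha}(n)\psi_{\alpha}(-n)$, i.e.\ a summand of $\affQ{+}^{\st}$ rather than $\affQ{-}^{\st}$; and likewise $\chi_-=\sum_{\alpha\in\sroots_{<0}}\schi(J_{-\alpha})\psi_{\alpha}(0)$ transports to $\chi_+'=\sum_{\beta\in\sroots_{>0}}\schi(J_{\beta})\psi_{-\beta}(0)$. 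This is exactly what the paper records in Step~1 of Section~\ref{section:proof}: on $C^{\bullet}(\gVerma{\lam})^*$ one has $(\affQ{-}\phi)(c)=\phi(\affQ{+}c)$. So duality yields
\[
\BRST_{\bullet}(\gVerma{\lam}^*)\;\cong\;\sfD\bigl(H^{\bullet}(C^{\bullet}(\gVerma{\lam}),\affQ{+})\bigr),
\]
and the right-hand side is the ``$+$''-reduction of $\gVerma{\lam}$, not $\BRST_{\bullet}(\gVerma{\lam})$. You therefore cannot invoke Theorem~\ref{Th:vanishing-Verma}; that theorem concerns the ``$-$''-reduction, and the two reductions are genuinely different functors (indeed the whole point of introducing the ``$-$''-reduction in \cite{FKW92,Ara04,Ara07} is that the ``$+$''-reduction fails to have good exactness properties on highest-weight categories in general).

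What the paper does instead is to analyse $H^{\bullet}(C^{\bullet}(\gVerma{\lam}),\affQ{+})$ directly: it passes to the subcomplex $C^{\bullet}_+(\lam)$ generated over $\bCg_+$ by the highest weight vector, equips it with a filtration by $x_0$-eigenvalue, and runs a spectral sequence whose $E_1$-page is controlled by Theorem~\ref{Th:KW}. The collapse of this spectral sequence (Propositions~\ref{Pro:2008-01-11} and~\ref{Pro:E_1}) and its comparison with the dual spectral sequence give both the vanishing and the almost co-highest weight property. Your argument for part~(ii), once the correct duality is in place, is essentially the same as the paper's final step, but getting there requires the spectral sequence work rather than a one-line appeal to Theorem~\ref{Th:vanishing-Verma}.
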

The proof of 
Theorem \ref{Th:Vanishing-verma-dual}
is given in
Section  \ref{section:proof}.

Though our formulation is slightly different from that of \cite{KacWak},
the following assertion
essentially confirms Conjecture B of \cite{KacWak},
partially (cf.\ Theorems \ref{Th:Main:typeA}, 
 \ref{Th;irr-generic} and \ref{Th:moruldar-invariant-represenrations}
below).
\begin{Th}[The main result]\label{Th:Main}
 Let $k$ be any complex number.
\begin{enumerate}
 \item Let $M$ be an object of $\affBGG$.
Then $\BRST_i(M)=0$ for all $i\ne 0$.
In particular
the functor $\BRST_0(?): \affBGG\ra \Wg\Mod_{\sigma_R}$ is exact.
\item For $\lam\in \affPp$,
 $\BRST_0(L(\lam))$ is zero or almost irreducible.
Further, one has
 $\BRST_0(L(\lam))\ne 0$ if and only if
$\Dim \sL(\slam)=d_{\chi}$.
\end{enumerate}
\end{Th}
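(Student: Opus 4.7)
The plan is to deduce both parts from the acyclicity of Verma modules and their duals already established in Theorems \ref{Th:vanishing-Verma} and \ref{Th:Vanishing-verma-dual}.

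For part (i), I would exploit that every indecomposable projective $P_{\leq \lam}(\mu)$ in $\affBGG^{\leq \lam}$ admits a Verma flag, hence is $\BRST$-acyclic by Theorem \ref{Th:vanishing-Verma}(i) combined with the long exact sequence, and dually that every indecomposable injective $I_{\leq \lam}(\mu) = P_{\leq \lam}(\mu)^*$ admits a dual Verma flag and is $\BRST$-acyclic by Theorem \ref{Th:Vanishing-verma-dual}(i). Given $M \in \affBGG^{\leq \lam}$, I would take a projective resolution $P_\bullet \to M$ by direct sums of $P_{\leq \lam}(\mu)$'s and form the bicomplex $P_\bullet \otimes \semiLamp{\bullet}$ with horizontal differential from the resolution and vertical differential $\affQ{-}$. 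The finiteness bound of Theorem \ref{Th:estimate} ensures each total-grade piece of the bicomplex is finite-dimensional and bounded, so the two associated spectral sequences strongly converge. The spectral sequence filtered first by the $\affQ{-}$-direction collapses at $E_2$ (by acyclicity of each $P_p$), with abutment supported in non-negative homological degrees; the other computes $\BRST_\bullet(M)$. Comparison forces $\BRST_i(M) = 0$ for $i < 0$, and the parallel argument with an injective resolution by $I_{\leq \lam}(\mu)$'s yields $\BRST_i(M) = 0$ for $i > 0$.

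For part (ii), part (i) implies the functor $\BRST_0 \colon \affBGG \to \Wg\Mod_{\sigma_R}$ is exact. Applying $\BRST_0$ to the short exact sequence $0 \to N \to M_0(\lam) \to L(\lam) \to 0$ exhibits $\BRST_0(L(\lam))$ as a quotient of the almost highest weight module $\BRST_0(M_0(\lam))$ (Theorem \ref{Th:vanishing-Verma}(ii)); dually, applying $\BRST_0$ to $0 \to L(\lam) \to M_0(\lam)^* \to M_0(\lam)^*/L(\lam) \to 0$ exhibits $\BRST_0(L(\lam))$ as a submodule of the almost co-highest weight module $\BRST_0(M_0(\lam)^*)$ (Theorem \ref{Th:Vanishing-verma-dual}(ii)). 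A nonzero quotient of an almost highest weight module with top at $d_0$ inherits the top at $d_0$ and the generation property, and symmetrically for a nonzero submodule of an almost co-highest weight module. Combining, $\BRST_0(L(\lam))$ is either zero or simultaneously almost highest weight and almost co-highest weight, hence almost irreducible.

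For the nonvanishing criterion, the key ingredient is an unconditional computation of the lowest graded component. At the minimal total degree $d_0$, the subcomplex $(L(\lam) \otimes \semiLamp{\bullet})_{d_0}$ reduces to $\sL(\slam) \otimes \bigwedge^\bullet(\sn_-)$ with $\affQ{-}$ restricting to the finite differential $\sQ$, so
\[
\BRST_0(L(\lam))_{d_0} \;\cong\; \sH_0(\sL(\slam))
\]
independently of whether either side is zero. By Matumoto's Theorem \ref{Th:exact-fd}(ii), the right-hand side is nonzero iff $\Dim \sL(\slam) = d_\chi$. In that case $\BRST_0(L(\lam))\neq 0$; conversely, if $\sH_0(\sL(\slam))=0$ then the almost highest weight property forces any nonzero $\BRST_0(L(\lam))$ to have its top at $d_0$, contradicting $\BRST_0(L(\lam))_{d_0}=0$, so $\BRST_0(L(\lam))=0$.

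The main obstacle is establishing strong convergence of the two spectral sequences in part (i): projective and injective resolutions in $\affBGG^{\leq \lam}$ are in general infinite, so one must apply the estimate of Theorem \ref{Th:estimate} carefully to verify that each total-degree component of the bicomplex is finite-dimensional and vanishes outside a bounded range of homological indices. This bookkeeping with the Hamiltonian grading is the technical heart of the argument.
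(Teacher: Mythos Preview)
Your proposal is correct and follows essentially the same route as the paper's proof (which is only sketched, referring to Theorems~7.6.1 and 7.6.3 of \cite{Ara07}): acyclicity of projectives and injectives via their (dual) Verma flags, combined with the degree bound of Theorem~\ref{Th:estimate}, gives (i); then (ii) follows from exactness together with the almost highest weight and almost co-highest weight properties of $\BRST_0(M_0(\lam))$ and $\BRST_0(M_0(\lam)^*)$, and Theorem~\ref{Th:exact-fd}(ii) for the nonvanishing criterion. Your bicomplex formulation is equivalent to the dimension-shifting argument of \cite{Ara07}; note incidentally that your assignment (projectives $\Rightarrow$ vanishing for $i<0$, injectives $\Rightarrow$ vanishing for $i>0$) is the one that actually comes out of the long exact sequence together with the estimate, whereas the paper's sketch states these the other way around---harmless, since both halves are needed.
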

\begin{proof}
We give only the sketch
of the proof because it is essentially
the  same as 
those of   
Theorems 
7.6.1 and 7.6.3 of \cite{Ara07}.

From Theorem \ref{Th:vanishing-Verma}
(i)
it follows that
$\BRST_i(M)=0$ for all $i\ne 0$
and $M\in \affBGG^{\Delta}$,
and hence $\BRST_i(P_{\leq \lam}(\mu))=0$
for all $i\ne 0$
and all $\mu\leq \lam$ in $\affPp$.
This together with Theorem \ref{Th:estimate}
gives the vanishing of $\BRST_i(M)$
for all $i>0$ and all $M\in \affBGG$.
Likewise, Theorem \ref{Th:Vanishing-verma-dual} (i)
gives $\BRST_i(M)=0$ for all $i<0$ and all
$M\in \affBGG$. This shows (i).
(ii) follows from (i) using 
Theorem \ref{Th:exact-fd} (ii),
Theorem \ref{Th:vanishing-Verma} (ii)
and Theorem \ref{Th:Vanishing-verma-dual} (ii).
\end{proof}

\begin{Co}\label{Co:criterion-irr}
Let $\lam\in \affPp$
with $k\in \C$.
The representation  
 $\BRST_0(L(\lam))$  is irreducible 
over $\Wg$
if and only if 
$\sH_0(\sL(\slam))$ is irreducible over $\Wfin$.
\end{Co}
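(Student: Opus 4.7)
The plan is to string together three ingredients already available: (a) Theorem \ref{Th:Main}(ii), which gives the dichotomy ``$\BRST_0(L(\lam))$ is either zero or almost irreducible'', together with the criterion $\BRST_0(L(\lam))\neq 0 \iff \Dim \sL(\slam)=d_{\schi}$; (b) Proposition \ref{Pro:BRST-top}, which identifies the top part as $\BRST_0(L(\lam))_{\tp}\cong \sH_0(\sL(\slam))$ exactly when $\Dim \sL(\slam)=d_{\schi}$; and (c) the general principle from \S\ref{subsection:Zhu-algbera} that an almost irreducible positive-energy module is simple if and only if its top part is irreducible over the $H$-twisted Zhu algebra, combined with the identification $\Zh_H(\Wg)\cong \Wfin$ of \eqref{eq:Zhu=finW}.

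For the ``if'' direction, I would start by assuming $\sH_0(\sL(\slam))$ is a nonzero irreducible $\Wfin$-module. By Theorem \ref{Th:exact-fd}(ii) (applied to the $\sg$-module $\sL(\slam)$), the non-vanishing of $\sH_0(\sL(\slam))$ forces $\Dim \sL(\slam)=d_{\schi}$. Hence by Theorem \ref{Th:Main}(ii), $\BRST_0(L(\lam))$ is nonzero and almost irreducible. Proposition \ref{Pro:BRST-top} then identifies its top part with $\sH_0(\sL(\slam))$, which by hypothesis is irreducible over $\Wfin=\Zh_H(\Wg)$. The almost-irreducibility criterion recalled at the end of \S\ref{subsection:Zhu-algbera} then promotes ``almost irreducible with irreducible top'' to ``simple'', giving irreducibility of $\BRST_0(L(\lam))$.

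For the ``only if'' direction, I would reverse the chain: if $\BRST_0(L(\lam))$ is irreducible then in particular it is nonzero, so Theorem \ref{Th:Main}(ii) again gives almost irreducibility and $\Dim \sL(\slam)=d_{\schi}$. Proposition \ref{Pro:BRST-top} identifies the top part with $\sH_0(\sL(\slam))$, and the criterion of \S\ref{subsection:Zhu-algbera} shows that the top part of a simple positive-energy module over $\Wg$ must be irreducible over $\Zh_H(\Wg)\cong \Wfin$; hence $\sH_0(\sL(\slam))$ is irreducible.

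Since all the heavy lifting (vanishing, almost irreducibility, and the structure of the top part) is already done in Theorem \ref{Th:Main} and Proposition \ref{Pro:BRST-top}, there is no real obstacle here: the corollary is a formal consequence of assembling these results with the Zhu-algebra isomorphism. The only subtlety is keeping track of the case $\sH_0(\sL(\slam))=0$, which must be excluded on both sides before invoking the almost-irreducibility criterion; both directions handle this automatically by passing through the equivalence $\BRST_0(L(\lam))\neq 0 \iff \sH_0(\sL(\slam))\neq 0 \iff \Dim \sL(\slam)=d_{\schi}$.
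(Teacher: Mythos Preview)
Your proposal is correct and takes essentially the same approach as the paper: the paper's proof simply says ``The assertion follows immediately from Proposition \ref{Pro:BRST-top} and Theorem \ref{Th:Main} (ii),'' and your argument is exactly the unpacking of that sentence, making explicit the role of the Zhu-algebra criterion from \S\ref{subsection:Zhu-algbera} and the identification \eqref{eq:Zhu=finW}.
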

\begin{proof}
 The assertion follows immediately from Proposition \ref{Pro:BRST-top}
and Theorem \ref{Th:Main} (ii).
\end{proof}

\subsection{The Character of $\BRST_0(L(\lam))$}
Let $\ch L(\lam)$ be the character of $L(\lam)$:
$\ch L(\lam)=\sum_{\mu}e^{\mu}\dim L(\lam)^{\mu}$.
One has
\begin{align*}
\ch L(\lam)=\sum_{\mu\in \dual{\bh}}c_{\lam, \mu}
\frac{e^{\mu}}
{\prod_{\alpha\in \roots_+}(1-e^{-\alpha})^{\-\dim \affg_{\alpha}}}
\end{align*}
with some $c_{\lam,\mu}\in \Z$.
The coefficient $c_{\lam,\mu}$ is known by Kashiwara
and Tanisaki
\cite{KasTan00} (in terms of the Kazhdan-Lusztig polynomials)
provided that $k$ is not critical 
(for any simple summand of $\fing$).

Recall \cite{KacWak04,KacWak}
that the ``Cartan subalgebra'' of $\Wg$
is given by
\begin{align}
 \afft=\bar\afft\+ \C D,
\quad\text{where }\bar \afft=\sh^f.
\end{align}
Because  it commutes with $\affQ{-}$,
 $\afft$ acts on the space $\BRST_{\bullet}(M)$.

Let  \begin{align*}
\ch \BRST_{\bullet}(L(\lam))=\sum_{\xi\in \dual{\afft}}e^{\xi}
\dim \BRST_{\bullet}(L(\lam))_{\xi},
     \end{align*}
where
$\BRST_{\bullet}(L(\lam))_{\xi}=\{ c\in \BRST_{\bullet}(L(\lam));
t c=\xi(t)c\ \forall t\in \afft\}$.

Set
\begin{align}
\chi_{\BRST_{\bullet}(L(\lam))}=
\sum_{i=-\infty}^{\infty}(-1)^i \ch \BRST_i(L(\lam)).
\end{align}By the Euler-Poincar\'{e} principle
one has \cite{FKW92,KacRoaWak03,KacWak}  
\begin{align}
 \chi_{\BRST_{\bullet}(L(\lam))}=
\frac{\sum_{\mu}c_{\lam,\mu} e^{\mu|{_\afft}}}
{\prod_{j\geq 1}(1-e^{-j\delta|_{\afft}})^{\rank \sg}
 \prod_{\alpha\in \rroots_{0,+}}(1-e^{-\alpha|_{\afft}})},
\label{eq:Euler-Poincare-ch}
\end{align}
where  $\rroots_{0,+}=\{\alpha\in \rroots_+;
\bar \alpha\in \sroots_0\}$.

The following assertion follows immediately from
Theorem \ref{Th:Main}.
\begin{Th}\label{Th:ch-formula}
For $\lam\in \affPp$
one has
\begin{align*}
 \ch \BRST_0(L(\lam))=\chi_{\BRST_{\bullet}(L(\lam))}.
\end{align*}
\end{Th}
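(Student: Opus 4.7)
The plan is to recognize that this theorem is essentially an immediate consequence of the vanishing result in Theorem \ref{Th:Main}(i), so the work has already been done. First I would note that $L(\lam)$ is an object of $\affBGG$, and since $\lam \in \affPp$ we have all hypotheses needed to apply Theorem \ref{Th:Main}(i), which gives $\BRST_i(L(\lam)) = 0$ for every $i \ne 0$. In particular, the $\afft$-character $\ch \BRST_i(L(\lam))$ is zero for $i \ne 0$.

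Next I would unwind the definition
\begin{align*}
\chi_{\BRST_{\bullet}(L(\lam))} = \sum_{i=-\infty}^{\infty}(-1)^i \ch \BRST_i(L(\lam)),
\end{align*}
which, given the vanishing, collapses to the single nonzero term $(-1)^0 \ch \BRST_0(L(\lam)) = \ch \BRST_0(L(\lam))$. This proves the identity. For this collapse to make sense term-by-term as an identity of formal characters on $\dual{\afft}$, I would invoke Theorem \ref{Th:estimate}, which guarantees that every $\afft$-weight space of $\BRST_i(L(\lam))$ is finite-dimensional and that for each fixed graded piece only finitely many indices $i$ contribute; this ensures that the Euler--Poincaré sum is well-defined and may be evaluated termwise.

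There is no real obstacle here: the only subtlety is purely bookkeeping, namely justifying that the formal sum $\chi_{\BRST_{\bullet}(L(\lam))}$ makes sense as an element of the completion of $\Z[\dual{\afft}]$ in which $\ch \BRST_0(L(\lam))$ naturally lives. Once this is verified via Theorem \ref{Th:estimate}, the equality is immediate from the vanishing. Equivalently, one can observe that the closed form \eqref{eq:Euler-Poincare-ch} for $\chi_{\BRST_{\bullet}(L(\lam))}$ is forced to equal $\ch \BRST_0(L(\lam))$ because all higher and lower cohomologies vanish, giving a closed-form character for the Ramond twisted $\Wg$-module $\BRST_0(L(\lam))$ in terms of the known coefficients $c_{\lam,\mu}$.
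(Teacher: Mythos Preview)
Your proposal is correct and is exactly the approach taken in the paper: the theorem is recorded there as an immediate consequence of Theorem~\ref{Th:Main}(i), with the Euler--Poincar\'e sum collapsing to the $i=0$ term. Your additional remark invoking Theorem~\ref{Th:estimate} to justify the well-definedness of the alternating sum is a reasonable bit of care, though the paper does not spell this out.
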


\subsection{Type $A$ Case}
In type $A$,
the following assertion
follows immediately from
(\ref{eq:Zhu=finW}),
 Theorems \ref{Th:BRST-finite}, \ref{Th:Brundan-Kleshchev}
and  \ref{Th:Main} (in the notation of \S \ref{subsection:type-A-fd}).
\begin{Th}[$\fing=\mathfrak{sl}_n$]\label{Th:Main:typeA}
 Let $k$ be any complex number.
\begin{enumerate}
\item 
One has
$\BRST_i(M)=0$ for all $i\ne 0$ and  all $M\in \affBGG$.
 \item For $\lam \in \affPp$, 
$\BRST_0(\L(\lam))\ne 
 0$ if and only if
$\bra \slam+\bar\rho,\alpha\che\ket
\not\in \N$
for all $\alpha\in \sroots^f_+$.
In this case 
$\BRST_0(\L(\lam))$ is  irreducible.
Further, any irreducible ordinary Ramond twisted  representation
of
$\W^k(\mathfrak{sl}_n,f)$ arises in this way.
\item 
Nonzero 
$\BRST_0(\L(\lam))$ and $\BRST_0(\L( {\mu}))$
with  $\lam, {\mu}\in \affPp$
are isomorphic
if and only of $\bar \mu+\bar\rho\in \sW^f(\slam+\bar \rho)$.
\end{enumerate}
\end{Th}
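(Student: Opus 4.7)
My plan is to deduce Theorem~\ref{Th:Main:typeA} as a direct corollary of the general Theorem~\ref{Th:Main} together with the type $A$ input provided by Brundan--Kleshchev (Theorem~\ref{Th:Brundan-Kleshchev}), bridged by the Zhu algebra identification $\Zh_H(\Wg)\cong\Wfin$ from~(\ref{eq:Zhu=finW}).

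Part~(i) will be immediate: the vanishing $\BRST_i(M)=0$ for $i\ne 0$ and $M\in\affBGG$ is exactly Theorem~\ref{Th:Main}~(i), which is established for any reductive $\fing$ admitting a good even grading for $f$, so no type-$A$ input is needed here. For part~(ii) I proceed in two steps. First, Theorem~\ref{Th:Main}~(ii) gives $\BRST_0(L(\lam))\ne 0$ iff $\Dim\sL(\slam)=d_\chi$; combined with Matumoto's criterion Theorem~\ref{Th:exact-fd}~(ii) this is equivalent to $\sH_0(\sL(\slam))\ne 0$, and by Theorem~\ref{Th:Brundan-Kleshchev}~(i) the latter translates to the explicit condition $\langle\slam+\bar\rho,\alpha\che\rangle\notin\N$ for all $\alpha\in\sroots^f_+$. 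Second, whenever $\BRST_0(L(\lam))\ne 0$, Theorem~\ref{Th:Main}~(ii) guarantees almost irreducibility; Proposition~\ref{Pro:BRST-top} identifies the top part with $\sH_0(\sL(\slam))$, which by Theorem~\ref{Th:Brundan-Kleshchev}~(i) is irreducible over $\Wfin$, so Corollary~\ref{Co:criterion-irr} promotes almost irreducibility to genuine irreducibility.

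For the surjectivity assertion in~(ii)---that every irreducible ordinary Ramond twisted representation of $\W^k(\mathfrak{sl}_n,f)$ arises as some $\BRST_0(L(\lam))$---I will invoke the Zhu bijection: simple objects of $\Wg\adMod_{\sigma_R}$ correspond to irreducible modules over $\Zh_H(\Wg)\cong\Wfin$ via the top-part functor. By Theorem~\ref{Th:Brundan-Kleshchev}~(i) every irreducible finite-dimensional $\Wfin$-module is of the form $\sH_0(\sL(\slam))$ for some $\slam\in\sP_0^+$; lifting $\slam$ arbitrarily to $\lam\in\affPp$ and applying Proposition~\ref{Pro:BRST-top} realizes the corresponding simple Ramond twisted $\Wg$-module as $\BRST_0(L(\lam))$. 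Theorem~\ref{Th:estimate} ensures that $\BRST_0(L(\lam))$ is in fact ordinary, so the bijection really lands inside the class of modules under consideration.

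Part~(iii) will then follow from the same Zhu-algebra bijection: two almost irreducible simple modules are isomorphic iff their top parts are isomorphic as Zhu-algebra modules, and by Theorem~\ref{Th:Brundan-Kleshchev}~(ii) the latter condition on $\sH_0(\sL(\slam))$ and $\sH_0(\sL(\bar\mu))$ is exactly $\bar\mu+\bar\rho\in\sW^f(\slam+\bar\rho)$. I do not anticipate any substantive obstacle: all the heavy vertex-algebra analysis has been absorbed into Theorem~\ref{Th:Main}, and the type-$A$ hypothesis enters only to upgrade \emph{almost} irreducible to irreducible and to classify the resulting modules explicitly, for which Theorem~\ref{Th:Brundan-Kleshchev} is used as a black box.
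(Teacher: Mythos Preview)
Your proposal is correct and follows essentially the same route as the paper, which simply remarks that the theorem is immediate from (\ref{eq:Zhu=finW}), Theorems~\ref{Th:BRST-finite}, \ref{Th:Brundan-Kleshchev} and~\ref{Th:Main}. Your write-up is in fact a careful unpacking of that one-line proof, including the surjectivity step in~(ii) via the Zhu bijection and the observation that an ordinary module has finite-dimensional top part.
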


Theorems \ref{Th:ch-formula} and \ref{Th:Main:typeA} determine\footnote{
In the case of $f$ is a principal nilpotent element
the characters of all irreducible positive energy representations
of $\Wg$ was previously determined in \cite{Ara07}
(for all $\fing$ and all $k\in \C$).
Also,
in the case $f$ is a minimal nilpotent element
the characters of all 
irreducible 
(non-twisted) 
positive energy representations
of $\Wg$ was previously determined in \cite{Ara05}
(for all $\fing$ and all non-critical $k$).}
 the 
characters of all
 irreducible ordinary 
Ramond twisted representations of 
$\W^k(\mathfrak{sl}_n,f)$ 
for all nilpotent elements $f$
at all non-critical levels $k$.

\begin{Rem}
{\rm  If $\fing$ is not of type $A$,
it not true that
nonzero $\BRST_0(L(\lam))$ is always irreducible,
see
 Theorem 3.6.3 of \cite{Mat90}.
However it is likely that
$\BRST_0(L(\lam))$ is a direct
sum of irreducible modules.       
} 
\end{Rem}

\subsection{Modular Invariant Representations}
\label{eq:Modular-Invariant-Represenrations}
In this section we assume that $\fing$ is simple.

Let $\Pr^k$ be the set of principal 
admissible weights \cite{KacWak89,KacWak}
of $\affg$
of level $k$.
For $\lam\in \Pr^k$
one has \cite{KacWak88}
\begin{align}
 \ch L(\lam)=\sum_{w\in W(\lam)}(-1)^{\ell_{\lam}(w)}
\frac{e^{w\circ \lam}}{\prod_{j\geq 1}(1-e^{-j\delta})^{\rank \fing} 
\prod_{\alpha\in \rroots_{+}}(1-e^{-\alpha}).}
\label{eq:ch-formala-admissible}
\end{align}

Let $\sroots(\lam)=\roots(\lam)\cap \sroots$,
and]let
 $\sW(\slam)\subset \sW$
be  the integral Weyl group of $\slam\in \dual{\sh}$
generated by $s_{\alpha}$ with $\alpha\in \sroots(\lam)$.
The formula (\ref{eq:ch-formala-admissible})
in particular
implies that
\begin{align}
 \ch \sL(\slam)=\sum_{w\in \sW(\slam)}(-1)^{\ell_{\slam}(\slam)}
\frac{e^{w\circ \lam}}{\prod_{\alpha\in \sroots_+}
(1-e^{-\alpha}).}
\end{align}

We remark that an element $\lam$ of $\Pr^k$
done not necessarily 
belong to
$\affPp$. 
However 
the Euler-Poincar\'{e} character $\chi_{\BRST_0(L(\lam))}$
makes sense for all $\lam\in \Pr^k$ \cite{KacWak},
and coincides with  the right-hand-side of (\ref{eq:Euler-Poincare-ch}).
Thus
it
has the form
\begin{align}
 \chi_{\BRST_0(L(\lam))}=
e^{\bra \lam,D\ket\delta|_{\afft} }\sum_{j\in \Z_{\geq 0}}
e^{-j\delta|_{\afft} }\varphi_{\lam,j}
\end{align}
with 
\begin{align}
 \varphi_{\lam,0}=
\frac{\sum_{w\in \sW(\slam)}(-1)^{\ell_{\lam}(w)}e^{w\circ \lam|_{\afft}}}
{\prod_{\alpha\in \sroots_{0,+}}(1-e^{-\alpha|_{\afft}}).}
\end{align}
Note that 
$\varphi_{\lam,0}$
is the Euler-Poincar\'{e} character of
$\sH_{\bullet}(\sL(\slam))$.

The Euler-Poincar\'{e} character $\chi_{\BRST_{\bullet}(L(\lam))}$
is called {\em almost convergent} \cite{KacWak} 
if $\lim\limits_{z\ra 0}\varphi_{\lam,0}(z)$
$(z\in  \bar \afft)$
exists and is non-zero.
Set
\begin{align}
& \widetilde{M}_k=\{\lam\in \Pr^k;
\chi_{\BRST_{\bullet}(L(\lam))}
\text{ is almost convergent}\},\\
&M_k=\widetilde{M}_k\cap \affPp.
\end{align}
(We are not assuming that $\fing=\mathfrak{sl}_n$.)

\begin{Th}\label{Th;irr-generic}
Let $\lam\in  {M}_k$.
Then 
$\BRST_{\bullet}(L(\lam))$ is irreducible.
\end{Th}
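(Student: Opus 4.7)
The plan is to reduce to a question about the finite $W$-algebra $\Wfin$ via Corollary \ref{Co:criterion-irr}. Since $\lam\in M_k\subset\affPp$, Theorem \ref{Th:Main} (i) already gives the vanishing $\BRST_i(L(\lam))=0$ for $i\ne 0$, so the irreducibility of $\BRST_\bullet(L(\lam))$ reduces to that of $\BRST_0(L(\lam))$. By Corollary \ref{Co:criterion-irr}, this in turn amounts to proving that the finite-dimensional $\Wfin$-module $\sH_0(\sL(\slam))$ is irreducible.

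Next I would unpack the almost convergence hypothesis. Because $\slam\in\sP_0^+$, Theorem \ref{Th:vanishing-finite} yields $\sH_i(\sL(\slam))=0$ for all $i>0$, so the Euler-Poincar\'e quantity $\varphi_{\lam,0}$ is the honest $\afft$-character of the finite-dimensional $\Wfin$-module $\sH_0(\sL(\slam))$. Existence of $\lim_{z\to 0}\varphi_{\lam,0}(z)$ together with its non-vanishing then forces $\sH_0(\sL(\slam))\ne 0$, which by Theorem \ref{Th:exact-fd} (ii) gives $\Dim\sL(\slam)=d_\schi$, and hence $\BRST_0(L(\lam))\ne 0$ by Theorem \ref{Th:Main} (ii).

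The main step, and expected chief obstacle, is to deduce the irreducibility of $\sH_0(\sL(\slam))$ from almost convergence alone. The Weyl-Kac-type expression
\[
\varphi_{\lam,0}=\frac{\sum_{w\in\sW(\slam)}(-1)^{\ell_\lam(w)}e^{w\circ\lam|_\afft}}{\prod_{\alpha\in\sroots_{0,+}}(1-e^{-\alpha|_\afft})}
\]
can possess a finite, non-zero limit at $z=0\in\bar\afft$ only when $(\slam+\bar\rho)|_{\bar\afft}$ is sufficiently generic with respect to the subgroup $\sW^f\cap\sW(\slam)$ governing the Brundan-Kleshchev/Matumoto-type equivalence classes of $\Wfin$-modules. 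Concretely, one shows the numerator is divisible by the denominator as a formal expression and that after cancellation the resulting polynomial coincides with the $\afft$-character of a single irreducible $\Wfin$-module of Whittaker type.

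Combining this identification with Theorem \ref{Th:Main} (ii) and Lemma \ref{lem:top-identical} — which identifies the top part of $\BRST_0(L(\lam))$ with $\sH_0(\sL(\slam))$ — one obtains that $\BRST_0(L(\lam))$ is almost irreducible with an irreducible top, and the criterion recalled at the end of Section \ref{subsection:Zhu-algbera} upgrades this to genuine irreducibility. The hard part is the Lie-theoretic genericity analysis: controlling the restriction of the $\sW(\slam)$-orbit of $\slam+\bar\rho$ to $\bar\afft$ and matching the resulting character against a known irreducible $\Wfin$-module, so that the almost irreducibility from Theorem \ref{Th:Main} (ii) can be converted into actual simplicity.
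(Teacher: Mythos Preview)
Your reduction via Corollary~\ref{Co:criterion-irr} to the irreducibility of $\sH_0(\sL(\slam))$ is exactly the right first move, and it is what the paper does. The gap is in how you propose to establish that irreducibility. Your plan is to analyse the limit of $\varphi_{\lam,0}$ and \emph{match its character against that of a known irreducible $\Wfin$-module}. But outside type $A$ the characters of irreducible finite-dimensional $\Wfin$-modules are not known, so there is nothing to match against; the ``Lie-theoretic genericity analysis'' you describe is not a proof, only a hope.

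The paper's argument avoids characters of $\Wfin$-modules entirely. The key input is a root-counting fact from Kac--Wakimoto (Corollary~2.2 of \cite{KacWak}): for a principal admissible $\lam$, almost convergence forces $|\sroots(\lam)|=|\sroots_0|$. Since $\lam\in\affPp$ already gives $\sroots_0\subset\sroots(\lam)$, one concludes $\sroots(\lam)=\sroots_0$, i.e.
\[
\bra\slam+\bar\rho,\alpha\che\ket\notin\Z\quad\text{for all }\alpha\in\sroots_{>0}.
\]
This is precisely the genericity hypothesis of Matumoto's irreducibility theorem (Theorem~3.4.4 of \cite{Mat90}), which directly yields that $\sH_0(\sL(\slam))$ is irreducible over $\Wfin$. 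No character comparison with $\Wfin$-modules is needed; the whole argument lives on the $\fing$-side. Your proposal is missing both of these ingredients: the root-system consequence of almost convergence, and the Matumoto criterion that converts it into irreducibility.
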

\begin{proof}
 By Corollary   \ref{Co:criterion-irr}
it is sufficient to show that
$\sH_0(\sL(\slam))$ is irreducible over $\Wfin$.

By Corollary 2.2 of \cite{KacWak} (or its proof)
one has
\begin{align*}
|\sroots(\lam)|=|\sroots_0|.
\end{align*}
(In our setting $\roots^0\sqcup \roots^{1/2}$
in \cite{KacWak} is identified with $\sroots_0$,
see \cite{BruGoo07}.)
Because $\lam\in \affPp$,
$\sroots_0\subset \sroots_+(\lam)$,
and hence 
$\sroots(\lam)=\sroots_0$.
This implies \begin{align}
	\bra \lam+\rho,\alpha\che\ket\not\in \Z,
\quad \forall \alpha\in \sroots_{>0}.
\label{eq:generic}
	     \end{align}
Thanks to Theorem 3.4.4 of \cite{Mat90},
this gives the irreduciblity of   $\sH_0(\sL(\slam))$.
\end{proof}

\begin{Rem}
 {\rm Let $\lam\in \affPp$.
From Theorem \ref{Th:Main}
it follows that
$\chi_{\BRST_0(L(\lam))}$
is almost convergent if and only if
$\Dim \sL(\slam)=d_{\chi}$.
}
\end{Rem}

\smallskip

Recall  \cite{KacWak} 
that the pair $(k,f)$  is called
{\em exceptional} 
if 
the Euler-Poincare character 
$\chi_{\BRST_{\bullet}(L(\lam))}$ is almost convergent
for some $\lam\in \Pr^k$,
and is either zero or
almost convergent for all $\lam\in \Pr^k$.

The exceptional pairs are classified in \cite{KacWak}
in type $A$:
Each admissible number \cite{KacWak89} $k$
of $\mathfrak{sl}_n$
is written as
\begin{align}
k+n=\frac{p}{q}
,\quad
p\geq n,\quad 
q\geq 1,\quad
(p,q)=1.
\label{eq:admissible-number}
\end{align}For such a $k$
the pair
$(k,f)$ is exceptional if and only if
$f$ is the  nilpotent element corresponding
 to the partition $(s,q,q,\dots, q)$
($s\equiv n\pmod{q}$,
$0\leq s<q$).

The following assertion was 
implicitly proved\footnote{In the case that
$f$ is a principal nilpotent element 
($=$ the case that $q\geq n$, 
$\sroots_0=\emptyset$ and $\sroots^f=\sroots$)
Proposition \ref{Pro:Wf-action}
was proved in \cite{FKW92}.
} in \cite{KacWak}.
\begin{Pro}\label{Pro:Wf-action}
 Let $(k,f)$ be an exceptional pair for $\mathfrak{sl}_n$.
There is an bijection
\begin{align*}
 \sW^f\times M_k\isomap \widetilde{M}_k,
\quad (w,\lam)\mapsto w\circ \lam.
\end{align*}
\end{Pro}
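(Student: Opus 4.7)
The proof is extractable from \cite{KacWak}, where the statement is implicit in the classification of almost convergent principal admissible characters. As in the proof of Theorem~\ref{Th;irr-generic} (via Corollary~2.2 of \cite{KacWak}), I would use the characterization: $\lam \in \widetilde{M}_k$ iff $\lam \in \Pr^k$ and $|\sroots(\lam)| = |\sroots_0|$; and for $\lam \in M_k$, this forces $\sroots(\lam) = \sroots_0$.

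For well-definedness, a direct computation gives $\sroots(w \circ \lam) = w\,\sroots(\lam) = w\,\sroots_0$ for $w \in \sW^f$ and $\lam \in M_k$, so $|\sroots(w \circ \lam)| = |\sroots_0|$. It remains to check that $w \circ \lam$ is still principal admissible, which follows from the explicit parametrization of $\Pr^k$ in \cite{KacWak89}. For injectivity, if $w_1 \circ \lam_1 = w_2 \circ \lam_2$ with $\lam_i \in M_k$ and $w_i \in \sW^f$, then $w := w_2^{-1} w_1 \in \sW^f$ satisfies $w\,\sroots_0 = \sroots_0$, so $w \in \sW^f \cap N_\sW(\sroots_0)$. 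In the tableau $Y_f$ any row and any column share at most one box, so any permutation preserving both the row partition and the column equivalence must fix every box; hence $w = 1$ and $\lam_1 = \lam_2$.

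For surjectivity, given $\mu \in \widetilde{M}_k$, I would find $w \in \sW^f$ with $\sroots(\mu) = w\,\sroots_0$ and set $\lam = w^{-1} \circ \mu$, so that $\sroots(\lam) = \sroots_0$; the dominance $\slam \in \sP_0^+$ then follows from $\sroots_{0,+} \subset \sroots(\lam)$ combined with the shape of a principal admissible weight, placing $\lam \in M_k$. The main obstacle is producing such a $w$: one must show that for an exceptional pair $(k,f)$ with partition $(s, q, \dots, q)$ and level denominator $q$, the integral root subsystems $\sroots(\mu)$ arising from $\mu \in \widetilde{M}_k$ are precisely the $\sW^f$-translates of $\sroots_0$. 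This combinatorial identification is the heart of the proposition and is established in \cite{KacWak} through the explicit parametrization of principal admissible weights of $\widehat{\mathfrak{sl}}_n$ at level $k + n = p/q$, where each size-$q$ block in the partition of $f$ contributes a matched $\mathfrak{S}_q$ factor to both $\sW^f$ and to the set of integral root subsystems of cardinality $|\sroots_0|$.
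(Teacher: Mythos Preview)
Your outline matches the paper's structure, but you rest on a characterization of $\widetilde{M}_k$ that differs from the one the paper actually uses, and this leaves a gap in your well-definedness step. The paper invokes Theorem~2.3 of \cite{KacWak}:
\[
\widetilde{M}_k=\{\lambda\in\Pr^k:\ \sroots(\lambda)\subset\sroots\setminus\sroots^f\}.
\]
Your cardinality condition $|\sroots(\lambda)|=|\sroots_0|$ is not what Corollary~2.2 supplies as an ``iff'' (in the paper it is only used to deduce one implication, for $\lambda$ already in $M_k$), so inferring $w\circ\lambda\in\widetilde{M}_k$ merely from $|\sroots(w\circ\lambda)|=|\sroots_0|$ is unjustified as written. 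The root-exclusion description also yields a clean argument that $w\circ\lambda$ stays in $\Pr^k$: for $w\in\sW^f$ one has $\rroots_+\cap w^{-1}(\rroots_-)\subset\sroots^f_+$, which is disjoint from $\sroots(\lambda)$, hence $\rroots_+(\lambda)\cap w^{-1}(\rroots_-)=\emptyset$ and $w\circ\lambda\in\Pr^k$; no appeal to the explicit parametrization in \cite{KacWak89} is needed. The paper then gets $w\circ\lambda\in\widetilde{M}_k$ directly from the $\sW^f$-invariance of $\chi_{\BRST_\bullet(L(\lambda))}$, and deduces faithfulness and $M_k\cap(\sW^f\circ\lambda)=\{\lambda\}$ immediately from $\sroots(\lambda)\cap\sroots^f=\emptyset$.

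For surjectivity you and the paper agree that the work lives in \cite{KacWak}: the paper cites Lemma~3.1 there for the lower bound $\rank\sroots(\lambda)\geq\rank\sroots_0$ for all $\lambda\in\Pr^k$, and Propositions~3.2--3.3 for the statement that any root subsystem of $\sroots\setminus\sroots^f$ of rank equal to $\rank\sroots_0$ is $\sW^f$-conjugate to $\sroots_0$. This is exactly the combinatorial input you flagged as the main obstacle. Your tableau argument for $\sW^f\cap N_{\sW}(\sroots_0)=\{1\}$ is correct and more explicit than the paper's terse deduction, but otherwise the two arguments run parallel.
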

\begin{proof}
By Theorem 2.3 of \cite{KacWak},
\begin{align}
 \widetilde{M}_k=\{\lam\in \Pr^k;
\sroots(\lam)\subset \sroots\backslash \sroots^f\}.
\label{eq:Theorem 2.3 of Kac Wakimoto}
\end{align}
Let $\lam\in \widetilde{M}_k$,
$w\in \sW^f$.
Since $\rroots_+\cap w\inv (\rroots_-)\subset \sroots^f_+$,
\eqref{eq:Theorem 2.3 of Kac Wakimoto}
gives 
$\rroots_+(\lam)\cap w\inv(\rroots_-)=\emptyset$,
or equivalently,
 $w\circ \lam\in \Pr^k$.
Because
\begin{align}
  \chi_{\BRST_{\bullet}(L(\lam))}=\chi_{\BRST_{\bullet}(L(w\circ
  \lam))},
\quad 
 \forall w\in \sW^f,
\end{align}
the element  $w\circ \lam$ belongs to $\widetilde{M}_k$.
Therefore
the shifted action of
$\sW^f$ preserves
 $\widetilde{M}_k$.
Further, 
again by \eqref{eq:Theorem 2.3 of Kac Wakimoto},
it follows that
this action of $\sW^f$ on $\widetilde{M}_k$ is faithful,
and that
$M_k\cap( \sW^f \circ \lam)=\{\lam\}$
for $\lam\in M_k$.

Next 
let $k$ be as in (\ref{eq:admissible-number}).
By Lemma 3.1 of \cite{KacWak}
one has
\begin{align}
 \rank \sroots(\lam)\geq \text{min}(n-q,0)
=\rank \sroots_0,\quad \forall \lam\in \Pr^k.
\end{align}
According to (the proofs of)
Propositions 3.2 and 3.3 of \cite{KacWak},
the rank of any root subsystem in 
$\sroots\backslash \sroots^f$
is equal to or smaller than $\rank \sroots_0$, 
and is equal to $\rank \sroots_0$ 
if and only if it
is $\sW^f$-conjugate to 
$\sroots_0$.
Thus for $\lam\in \widetilde{M}_k$
there exists 
$w\in\sW^f$
such that
 $\sroots(\lam)=w(\sroots_{0})$,
and thus
$w\inv\circ \lam\in M_k$.
This completes the proof.
\end{proof}

According to \cite{KacWak},
 Theorem \ref{Th:Main:typeA}
and Proposition \ref{Pro:Wf-action}
give
 the following assertion\footnote{However the rationality of
the simple quotient
of 
 $\W^k(\mathfrak{sl}_n,f)$
still
remains  to be an open problem.}.
\begin{Th}[Conjectured by Kac and Wakimoto \cite{KacWak}]
\label{Th:moruldar-invariant-represenrations}
Let $(k,f)$ be an exceptional pair for $\mathfrak{sl}_n$.
The linear span of  the normalized characters of
irreducible ordinary Ramond twisted representations
$\BRST_0(L(\lam))$
of $\W^k(\mathfrak{sl}_n,f)$,
with $\lam\in M_k$,
are 
closed under the natural action of $SL_2(\Z)$.
\end{Th}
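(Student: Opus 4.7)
The plan is to reduce the modular-invariance claim, which is a statement about characters of the irreducible $\W^k(\mathfrak{sl}_n,f)$-modules $\BRST_0(L(\lam))$, to the corresponding statement for the Euler--Poincar\'{e} characters of $\BRST_{\bullet}(L(\lam))$ taken over all $\lam$ in $\widetilde{M}_k$, the latter being precisely what Kac and Wakimoto establish in \cite{KacWak}. The bridge between the two is provided by the results already proved in this paper.

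First I would observe that for $\lam\in M_k\subset \affPp$, Theorem \ref{Th:Main} together with Theorem \ref{Th:ch-formula} give the key identification
\begin{align*}
\ch \BRST_0(L(\lam)) \;=\; \chi_{\BRST_{\bullet}(L(\lam))},
\end{align*}
so the normalized character of the irreducible Ramond twisted $\W^k(\mathfrak{sl}_n,f)$-module $\BRST_0(L(\lam))$ agrees, up to the standard $q^{-c/24}$ normalization, with the normalized Euler--Poincar\'{e} character. Moreover by Theorem \ref{Th:Main:typeA}(iii) (or by inspection of \eqref{eq:Euler-Poincare-ch}) the map $\lam\mapsto \chi_{\BRST_{\bullet}(L(\lam))}$ is constant on $\sW^f$-orbits, so combining this with the bijection $\sW^f\times M_k \isomap \widetilde{M}_k$ of Proposition \ref{Pro:Wf-action} one obtains
\begin{align*}
\operatorname{span}_{\C}\{\chi_{\BRST_{\bullet}(L(\lam))}\mid \lam\in \widetilde M_k\}
\;=\;
\operatorname{span}_{\C}\{\chi_{\BRST_{\bullet}(L(\lam))}\mid \lam\in M_k\},
\end{align*}
and the distinctness of the irreducible $\W^k(\mathfrak{sl}_n,f)$-modules $\BRST_0(L(\lam))$ indexed by $\lam\in M_k$ is already guaranteed by Theorem \ref{Th:Main:typeA}(iii).

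Next I would invoke the results of \cite{KacWak} for the exceptional pair $(k,f)$: there Kac and Wakimoto show that the normalized Euler--Poincar\'{e} characters $\chi_{\BRST_{\bullet}(L(\lam))}$, with $\lam$ running over the set of principal admissible weights of $\affg$ at level $k$, are holomorphic on the upper half-plane; those indexed by $\widetilde{M}_k$ are precisely the non-zero ones, and their linear span is stable under the natural $SL_2(\Z)$-action. This is the input that makes the whole argument work and requires no further proof in our setting.

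Putting the two ingredients together yields the theorem: the span of normalized characters of $\BRST_0(L(\lam))$ for $\lam\in M_k$ coincides, via Theorem \ref{Th:ch-formula} and the $\sW^f$-invariance reduction of Proposition \ref{Pro:Wf-action}, with the span of the normalized Euler--Poincar\'{e} characters over $\widetilde{M}_k$, which is $SL_2(\Z)$-invariant by \cite{KacWak}. The only genuine step on our side is the identification $\ch \BRST_0(L(\lam)) = \chi_{\BRST_{\bullet}(L(\lam))}$, which in turn rests on the vanishing part of Theorem \ref{Th:Main}(i); the remainder is bookkeeping. Accordingly I do not foresee a serious obstacle beyond correctly matching the normalization constants used in \cite{KacWak} with the Hamiltonian $H$ fixed in \eqref{eq:Hamitonian}, so that the character identity above becomes the equality of normalized $q$-expansions needed for the modular argument.
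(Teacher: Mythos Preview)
Your proposal is correct and follows exactly the approach the paper takes: the paper simply states that, according to \cite{KacWak}, Theorem \ref{Th:Main:typeA} and Proposition \ref{Pro:Wf-action} give the assertion, and you have spelled out precisely how these ingredients combine (identify $\ch \BRST_0(L(\lam))$ with $\chi_{\BRST_{\bullet}(L(\lam))}$ via vanishing, use the $\sW^f$-invariance and Proposition \ref{Pro:Wf-action} to pass between $M_k$ and $\widetilde{M}_k$, then invoke the $SL_2(\Z)$-invariance from \cite{KacWak}). Your remark about matching normalizations is a fair caveat but not a genuine obstruction.
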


\section{Proof of Theorem \ref{Th:Vanishing-verma-dual}}
\label{section:proof}
The proof of Theorem \ref{Th:Vanishing-verma-dual}
is essentially the repetition
of the argument  of \S 7 of \cite{Ara07}.
Therefore  we give only the sketch  of the proof. 
\subsection{Step 1}
Let  
\begin{align}
C^{\bullet}(\gVerma{\lam}):=\gVerma{\lam}\* \semiLamp{\bullet}.
\end{align}
As in \S 8.2 of \cite{Ara07},
we identify
$\gVerma{\lam}^*\* \semiLamp{\bullet}$
with 
$C^{\bullet}(\gVerma{\lam})^*$
($^*$ is defined in (\ref{eq:graded-dual})):
\begin{align}
 \BRST_{\bullet}(\gVerma{\lam}^*)=
H_{\bullet}(C^{\bullet}(\gVerma{\lam})^*,\affQ{-}).
\end{align}

The differential $\affQ{-}$ acts on
$C^{\bullet}(\gVerma{\lam})^*$
by 
\begin{align}
(\affQ{-}  \phi)(c)=\phi(\affQ{+} c)
\end{align}
for $\phi \in C^{\bullet}(\gVerma{\lam})^*$,
$v\in C^{\bullet}(\gVerma{\lam})$,
where
\begin{align}
\affQ{+}=(Q_+^{\st})_{(0)}+\chi_+',
\quad 
\chi_+'=\sum_{\alpha\in
\roots_{\geq 1}}\chi(x_{\alpha})\psi_{-\alpha}(0).
\end{align}

Below we regard
$C^{\bullet}(\gVerma{\lam})$  as a $\sigma_R$-twisted 
representation of $\bCg$
by
the 
action
\begin{align}
X(n)^R \mapsto   \widehat{t}_{-\frac{1}{2}h_0}(X(n)),
\quad
 \psi_{\alpha}(n)^R\mapsto\widehat{t}_{-\frac{1}{2}h_0}(\psi_{\alpha}(n))
\label{eq:2008-08-01-22-13-35}
\end{align}
(see Remark \ref{Rem:action-is-not-the-same} below).
Then 
\begin{align}
 (Q_{(-1)}\1)^{C^{\bullet}(\gVerma{\lam})}_{(0)}=\affQ{+}
\end{align}
(in the notation of \S  \ref{subsection:twisted-representations}).

Let $C_+^{\bullet}(\lam)$
the $\bCg_+$-submodule
of $C^{\bullet}(\gVerma{\lam})$
spanned by the vectors
\begin{align}
 \wJ_{a_1}(m_1)\dots \wJ_{a_r}(m_r)\psi_{\beta_1}(n_1)
\dots \psi_{\beta_s}(n_s)v_{\lam}
\end{align}
with $a_i\in \sroots_{\leq 0}\sqcup \sI$
and $\beta_i\in \sroots_{<0}$,
where $v_{\lam}$ is the highest weight vector
of $C^{\bullet}(\gVerma{\lam})$.
As in \S \ref{subsection:generating-field-of-W},
it follows that 
$C_+^{\bullet}(\lam)$ is an subcomplex 
of 
$C^{\bullet}(\gVerma{\lam})$.

The graded dual space $C_+^{\bullet}(\lam)^*
$
of $C_+^{\bullet}(\lam)$ is a quotient complex of 
$C^{\bullet}(\gVerma{\lam})^*$.
Thus   there is a natural  map
\begin{align}
\BRST_{\bullet}(\gVerma{\lam}^*)\ra
H_{\bullet}(C_+^{\bullet}(\lam)^*).
\label{eq:2008-01-13-1}
\end{align}
The
space
$C_+^{\bullet}(\lam)^*$ is a
$\mathcal{C}_+^{\bullet}$-submodule of 
$\gVerma{\lam}^*\* \semiLamp{\bullet}$
with respect to the action 
(\ref{eq:twisted-action}).
Hence by (\ref{eq:W-as-sub})
it follows that (\ref{eq:2008-01-13-1})
is a homomorphism of
Ramond twisted  representations of $\Wg$.

One has the following assertion
(cf.\ Proposition 8.3.4
of \cite{Ara07}):
\begin{Pro}
The map (\ref{eq:2008-01-13-1}) gives the  isomorphism
\begin{align*}
\BRST_{\bullet}(\gVerma{\lam}^*)
\cong H_{\bullet}(C^{\bullet}_+(\lam)^*)
\end{align*}
 of $\Wg$-modules.
\end{Pro}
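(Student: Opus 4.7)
The plan is to follow the template of Proposition~8.3.4 of \cite{Ara07}, adapted to the dual Verma setting and the Ramond-twisted action (\ref{eq:2008-08-01-22-13-35}). First I would exploit the vector-space decomposition $\bCg\cong \bCg_-\otimes \bCg_+$ of (\ref{eq:tensorproduct-decomposition}) to produce an analogous tensor factorization $C^\bullet(\gVerma{\lam})\cong C_-^\bullet(\lam)\otimes C_+^\bullet(\lam)$, where $C_-^\bullet(\lam)=\bCg_-\cdot v_\lam$ collects the action of $\bCg_-$ on the highest weight vector $v_\lam$. Taking graded duals with respect to the weight decomposition then compatibly splits $C^\bullet(\gVerma{\lam})^*\cong C_-^\bullet(\lam)^*\otimes C_+^\bullet(\lam)^*$, using Lemma~\ref{Lem:easy-lemma} to handle the weight-wise finite-dimensionality needed to identify the graded dual of a tensor product with the tensor product of graded duals.

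The surjection $C^\bullet(\gVerma{\lam})^*\twoheadrightarrow C_+^\bullet(\lam)^*$ in (\ref{eq:2008-01-13-1}) is by construction the graded dual of the subcomplex inclusion $C_+^\bullet(\lam)\hookrightarrow C^\bullet(\gVerma{\lam})$, hence a map of complexes for the differential $\affQ{-}$ and of $\bCg_+$-modules. Under the tensor factorization above it corresponds to pairing with the cyclic vector of $C_-^\bullet(\lam)$, and the differential on the dual splits into a piece acting only on $C_-^\bullet(\lam)^*$ and a piece acting only on $C_+^\bullet(\lam)^*$. By a K\"unneth argument, proving that (\ref{eq:2008-01-13-1}) is an isomorphism therefore reduces to showing that the homology of $C_-^\bullet(\lam)^*$ with respect to its part of $\affQ{-}$ is concentrated in a single charge and is one-dimensional there.

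The main obstacle will be exactly this vanishing assertion for the ``minus'' dual complex. I would attack it by the spectral-sequence strategy underlying Theorem~\ref{Th:KW}: filter $C_-^\bullet(\lam)^*$ by $x_0$-weight so that on the associated graded the surviving part of $\affQ{-}$ is the Koszul-type differential built from $\chi$ and the cubic Clifford term, whose cohomology can be computed directly and lies in one charge. Convergence of the resulting spectral sequence is secured by the finite-dimensionality of each weight space of $\gVerma{\lam}^*$ (cf.\ Lemma~\ref{Lem:easy-lemma}), which makes the induced filtration on each homogeneous piece exhaustive, separated and bounded below; the subtlety compared with \cite{Ara07} is that one is working with duals, so that estimates of support must be checked before passing to homology.

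Combining the K\"unneth identification with this vanishing yields the desired isomorphism $\BRST_\bullet(\gVerma{\lam}^*)\cong H_\bullet(C_+^\bullet(\lam)^*)$. It is automatically $\Wg$-linear since the map (\ref{eq:2008-01-13-1}) is $\bCg_+$-equivariant and $\Wg=H^0(\bCg_+)$ by (\ref{eq:W-as-sub}).
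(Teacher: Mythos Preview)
Your proposal is correct and follows the same route the paper intends: the paper gives no proof here but points to Proposition~8.3.4 of \cite{Ara07}, whose argument is exactly the tensor factorization $C^{\bullet}(\gVerma{\lam})\cong C_-^{\bullet}(\lam)\otimes C_+^{\bullet}(\lam)$ paralleling (\ref{eq:tensorproduct-decomposition}), passage to graded duals, K\"unneth, and acyclicity of the minus factor.

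One small correction of emphasis. The acyclicity of $\bCg_-$ recalled before (\ref{eq:2007-12-26-1}) is not obtained via the $x_0$-weight filtration spectral sequence; that filtration is used on the $\bCg_+$ side (see the Remark following Theorem~\ref{Th:KW}). On the minus side the differential sends $\psi_{\alpha}$ to $\wJ_{\alpha}+\schi(J_{\alpha})$ (for $\alpha\in\sroots_{>0}$), so the complex is a Koszul/de~Rham type complex admitting an explicit contracting homotopy; the same homotopy works verbatim on $C_-^{\bullet}(\lam)$ and on its graded dual. Replacing your spectral-sequence step by this direct argument makes the convergence worries you flag disappear, but either way the conclusion is the same.
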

\begin{Rem}
\label{Rem:action-is-not-the-same}
{\rm 
 Using the action (\ref{eq:2008-08-01-22-13-35})
one can define a $\sigma_R$-twisted $\bCg$-module structure on 
$M_0(\lam)^* \*\semiLamp{\bullet}$ by
the formula
\begin{align*}
(X(n) \phi)(c)=\phi(X^t(-n) c)  \quad \text{with $X\in \fing$.}
\end{align*}
This is not the same as action as \eqref{eq:twisted-action},
but as easily seen 
 $\BRST_{0}(\gVerma{\lam}^*)$  is almost co-highest weight
if and only if it is so with respect to this new
action of $\Wg$.
}\end{Rem}
\subsection{Step 2}
One has
\begin{align*}
C_+^{\bullet}(\lam)=\bigoplus\limits_{d\in
-\bra \lam,D\ket +\Z_{\geq 0}}C_+^{\bullet}(\lam)_d,
\quad \dim C_+^{\bullet}(\lam)_d=\infty.
\end{align*}
Note that
the subspace
 $C_+^{\bullet}(\lam)_{\tp}=C_+^{\bullet}(\lam)_{-\bra \lam,D\ket}$
 is 
the subcomplex of 
$(C_+^{\bullet}(\lam), \affQ{+})
$ spanned by the
 vectors
\begin{align}
 \wJ_{a_1}(0)\dots \wJ_{a_r}(0)
\psi_{\beta_1}(0)\dots \psi_{\beta_s}(0)v_{\lam}
\end{align}
with $a_i\in \sroots_{\leq 0}\sqcup \sI$,
$\beta_i\in \sroots_{<0}$,
and hence,
\begin{align}
C_+^{\bullet}(\lam)_{\tp}=\sM_0(\slam)\* \bigwedge\nolimits^{\bullet}
(\sg_{>0}^*).
\end{align}
One has the weight space
decomposition 
\begin{align*}
C_+^{\bullet}(\lam)_{\tp}=
\bigoplus\limits_{\mu\in \dual{\bh}
\atop \bra \lam-\mu,x_0\ket\geq 0}
C_+^{\bullet}(\lam)_{\tp}^{\mu}.
\end{align*}Define a 
decreasing filtration
 \begin{align*}
  C_+^{\bullet}(\lam)_{\tp}= F^0C_+^{\bullet}(\lam)_{\tp}
\supset F^1 C_+^{\bullet}(\lam)_{\tp} \supset \dots  
 \end{align*}
of $C_+^{\bullet}(\lam)_{\tp}$
by
\begin{align}
 F^p C_+^{\bullet}(\lam)_{\tp}=
\bigoplus\limits_{\mu\in \dual{\bh}
\atop  \bra \lam-\mu,x_0\ket\geq p}
C_+^{\bullet}(\lam)_{\tp}^{\mu}.
\end{align}
Then
\begin{align}
 &(Q_+^{\st})_{(0)}\cdot F^p C_+^{\bullet}(\lam)_{\tp}
\subset F^p C_+^{\bullet}(\lam)_{\tp},\\
&
\chi_+' \cdot F^p C_+^{\bullet}(\lam)_{\tp}\subset F^{p+1}
 C_+^{\bullet}(\lam)_{\tp}.
\label{eq:2008-01-22-13-55}
\end{align}

Let
$F^p C_+^{\bullet}(\lam)$ be the subspace
of $C_+^{\bullet}(\lam)$
generated by
$C_+^{\bullet}(\lam)_{\tp}$
over $\bCg_+$.
One has
 \begin{align}
&  C_+^{\bullet}(\lam)= F^0C_+^{\bullet}(\lam)
\supset F^1 C_+^{\bullet}(\lam)\supset \dots ,
\\& \bigcap_p F^p C_+^{\bullet}(\lam)=0,
\\
& \affQ{+} F^p C_+^{\bullet}(\lam)\subset F^p C_+^{\bullet}(\lam),\\
& a_{(n)}\cdot F^p C_+^{\bullet}(\lam)
\subset F^p C_+^{\bullet}(\lam)
\quad (a\in \bCg_+, n\in \Z)
\label{eq:2008-01-16-15-36}
 \end{align}
(cf. Proposition 8.5.3
of  \cite{Ara07}).

Let $(\cE_r^{p,q}, d_r)$ be the corresponding
spectral sequence:
\begin{align}
&\cE_0^{p,q}=F^p C^{p+q}_+(\lam)/F^{p+1}C^{p+q}_+(\lam),\\
&\cE_1^{p,q}=H^{p+q}(\cE^{p,\bullet}_0).
\end{align}
We do not claim that 
this spectral sequence 
 converges
to $H^{\bullet}(C_+^{\bullet}(\lam))$.
We will show in  Proposition \ref{Pro:cE-to-E} below
 that
$\cE_r$ converges to the dual 
$\sfD(\BRST_{\bullet}(\gVerma{\lam}^*))$
of $\BRST_{\bullet}(\gVerma{\lam}^*)$.

\subsection{Step 3}
Set
\begin{align}
 F_p C^{\bullet}_+(\lam)^*
=(C_+^{\bullet}(\lam)/F^p C_+^{\bullet}(\lam))^*
\subset C^{\bullet}_+(\lam)^*.
\end{align}
Then 
$\{F_p C^{\bullet}_+(\lam)^*\}$
defines an
exhaustive,
  increasing filtration of
the chain complex 
$\{C^{\bullet}_+(\lam)^*\}$ 
which is obviously bounded below
(cf.\ 
Lemma 8.5.4 and Proposition 8.5.5
of \cite{Ara07}).
It follows that
one has the corresponding 
converging
spectral sequence
\begin{align}
 E^r\Rightarrow H_{\bullet}(C^{\bullet}_+(\lam)^*)
=\BRST_{\bullet}(M_0(\lam)^*).
\end{align}
Let $\{F_p \BRST_{\bullet}(\gVerma{\lam}^*)\}$
be the corresponding increasing filtration of
$\BRST_{\bullet}(\gVerma{\lam}^*)$.

Because the filtration is compatible with
the action of 
the Hamiltonian
$-D$,
each $E^r_{p,q}$
decomposes 
into eigenspaces of $-D$
as complexes:
\begin{align}
 E^r_{p,q}=\bigoplus_{d\in -\bra \lam,D\ket +\Z_{\geq 0}}
(E^r_{p,q})_d.
\end{align}
It follows that
\begin{align}
 E^{\infty}_{p,q}=
\bigoplus_{d\in -\bra \lam, D\ket+\Z_{\geq 0}}
(E^{\infty}_{p,q})_d,
\end{align}
and each $(E^r)_d$ converges to $(E^{\infty})_d$.
In particular 
one has
\begin{align}
 \bigoplus_{p+q=n}(E_{p,q}^{\infty})_{\tp}
=\begin{cases}
\gr_F \BRST_0(\gVerma{\lam}^*)_{\tp}
&\text{if }p+q=0,\\
0&\text{if }p+q\ne 0.
 \end{cases}
\label{eq:E-infty-top}
\end{align}
by Proposition  \ref{Pro:BRST-top}.

Also
by \eqref{eq:2008-01-16-15-36}
this filtration is compatible with
the $\sigma_R$-twisted action 
of $\Wg$.
Hence
 each $E^r_{p,q}$ is a 
 Ramond twisted representation of $\Wg$,
and the differential $d^r$ is a 
morphism in $\Wg\adMod_{\sigma_R}$.
Therefore $\{F_p \BRST_{\bullet}(\gVerma{\lam}^*)\}$
is a filtration of Ramond twisted 
representations of $\Wg$,  and
the corresponding graded space
\begin{align*}
\gr^F \BRST_0(\gVerma{\lam}^*)
=\bigoplus_{p+q=0}E_{p,q}^{\infty}
\end{align*}is also an object of $\Wg\adMod_{\sigma_R}$.

\subsection{Step 4}
Consider the subcomplex
\begin{align}
 (\cE_{0}^{p,q})_{\tp}=(\cE_0^{p,q})_{\bra \lam,D\ket}
=F^p C^{p+q}_+(\lam)_{\tp}/F^{p+1}C^{p+q}_+(\lam)_{\tp}\nno\\
\cong \bigoplus_{\bra \lam-\mu,x_0\ket=p}C_+^{p+q}(\lam)_{\tp}^{\mu}
\nno
\end{align}
of $\cE_0^{p,q}$.
By \eqref{eq:2008-01-22-13-55}
one has
\begin{align}
\left((\cE_0^{p,\bullet})_{\tp}, \affQ{+}\right)
\cong
\bigoplus_{\bra \lam-\mu,x_0\ket=p}\left(
C_+^{p+q}(\lam)_{\tp}^{\mu},
( Q_+^{\st})_{(0)}
\right)
\end{align}
as complexes.

By definition $\cE_0^{p,\bullet}$
is spanned by the vectors
\begin{align}
 \wJ_{a_1}(m_1)\dots \wJ_{a_r}(m_r)\psi_{\beta_1}(n_1)
\dots \psi_{\beta_s}(n_s)c
\end{align}
with 
$c\in (\cE_0^{p,\bullet})_{\tp}$,
and $m_i, n_i<0$.
It follows that
each $D$-eigenspace
$(\cE_0^{p,\bullet})_d$ is finite-dimensional.
Thus
by Lemma \ref{Lem:easy-lemma},
\begin{align}
 E^0_{p,q}
(=(\cE_0^{p-1,q+1})^*)=
\sfD(\cE_0^{p-1,q+1}).
\label{eq:2008-01-15-15-55}
\end{align}

The following assertion follows immediately from 
\eqref{eq:2008-01-15-15-55}.
\begin{Pro}\label{Pro:2008-01-12}
 One has
$E^1_{p,q}=\sfD(\cE_1^{p-1,q+1})$,
or equivalently,
$\cE_1^{p,q}=\sfD(E^1_{p+1,q-1})$.
\end{Pro}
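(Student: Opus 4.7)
The plan is to deduce the $E^1$-level duality from the $E^0$-level duality \eqref{eq:2008-01-15-15-55} by showing that the graded dual functor $\sfD$ commutes with taking cohomology of the relevant complex, and that the differentials on the two spectral sequences are mutually transpose.

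First I would unpack the definition of $E^0_{p,q}$. By construction
\begin{align*}
 E^0_{p,q} = F_p C^{p+q}_+(\lam)^*/ F_{p-1}C^{p+q}_+(\lam)^*
 = \bigl(F^{p-1}C^{p+q}_+(\lam)/ F^{p}C^{p+q}_+(\lam)\bigr)^*
 = \sfD(\cE_0^{p-1,q+1}),
\end{align*}
where the last equality is \eqref{eq:2008-01-15-15-55} together with the fact that each $D$-eigenspace of $\cE_0^{p-1,q+1}$ is finite-dimensional (as observed in Step~4 of the argument), so that $(-)^*$ and $\sfD$ agree on these graded pieces by Lemma \ref{Lem:easy-lemma}. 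Next, the differential $d_0$ on $E^0_{p,q}$ is the restriction of $\affQ{-}$, which by definition acts by $(\affQ{-}\phi)(c) = \phi(\affQ{+}c)$; on the associated graded this is precisely the transpose of $\affQ{+}$ acting on $\cE_0^{p-1,\bullet+1}$. Hence the complex $(E^0_{p,\bullet}, d_0)$ is the $\sfD$-dual of the complex $(\cE_0^{p-1,\bullet+1}, d_0)$.

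The remaining point is that $\sfD$ commutes with cohomology here. Since $\sfD$ acts by $D$-eigenspace-wise linear dualization and each $D$-eigenspace of $\cE_0^{p-1,q+1}$ is finite-dimensional, $\sfD$ is exact on the category of $D$-graded vector spaces with finite-dimensional homogeneous components, and the differential $d_0$ preserves this grading (as $D$ commutes with $\affQ{+}$). Therefore
\begin{align*}
 E^1_{p,q}
 = H^{p+q}(E^0_{p,\bullet},d_0)
 = H^{p+q}\bigl(\sfD(\cE_0^{p-1,\bullet+1}), d_0^t\bigr)
 = \sfD\bigl(H^{p+q}(\cE_0^{p-1,\bullet+1}, d_0)\bigr)
 = \sfD(\cE_1^{p-1,q+1}),
\end{align*}
which is the desired identity. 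The equivalent form $\cE_1^{p,q}=\sfD(E^1_{p+1,q-1})$ follows by reindexing and applying $\sfD$ once more, using that the finite-dimensionality of $D$-eigenspaces is inherited by $\cE_1$.

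I expect no serious obstacle here; the only subtle point is book-keeping the indices and verifying that the finite-dimensionality hypothesis of Lemma \ref{Lem:easy-lemma} passes from $C_+^{\bullet}(\lam)$ to $\cE_0^{\bullet,\bullet}$ and thence to $\cE_1^{\bullet,\bullet}$, which is immediate from the spanning description at the end of Step~4.
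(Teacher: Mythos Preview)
Your argument is correct and is precisely the unpacking of what the paper means by ``follows immediately from \eqref{eq:2008-01-15-15-55}'': you identify the $E^0$-pages as $\sfD$-dual with transpose differentials, then use finite-dimensionality of $D$-eigenspaces (Lemma~\ref{Lem:easy-lemma}) to pass to cohomology. There is no meaningful difference in approach---the paper simply omits the bookkeeping you have supplied.
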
 
The following assertion 
follows from Proposition
 \ref{Pro:2008-01-12}
by the inductive argument.
\begin{Pro}
\label{Pro:cE-to-E}
The spectral sequence $\cE^r$ 
converges to $\sfD(E_{\infty})$.
\end{Pro}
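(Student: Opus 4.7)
The plan is to prove by induction on $r\geq 1$ the identity
\begin{align*}
\cE_r^{p,q}=\sfD(E^r_{p+1,q-1}),
\end{align*}
compatibly with the differentials, and then deduce the convergence statement from the already-established convergence of $E^r$ together with finite-dimensionality of the $-D$-eigenspaces.

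The base case $r=1$ is exactly Proposition \ref{Pro:2008-01-12}. For the inductive step I would verify two things. First, the graded-duality isomorphism $\cE_r^{p,q}\cong \sfD(E^r_{p+1,q-1})$ intertwines the differentials $d_r\colon \cE_r^{p,q}\to \cE_r^{p+r,q-r+1}$ and the transpose of $d^r\colon E^r_{p+1,q-1}\to E^r_{p+1-r,q-1+r-1}$. This is natural once one unwinds the definitions: both $d_r$ and $d^r$ are induced by $\affQ{+}$, respectively $\affQ{-}$, and the pairing between $C_+^{\bullet}(\lam)$ and $C_+^{\bullet}(\lam)^*$ satisfies $(\affQ{-}\phi)(c)=\phi(\affQ{+}c)$ by construction. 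The filtrations $F^p$ and $F_p$ were defined to be dual to each other, so at each page the induced differentials are transposes. Second, since each homogeneous subspace $(\cE_r^{p,q})_d$ is finite-dimensional (inherited from $(\cE_0^{p,q})_d$ via iterated subquotients), the graded dual commutes with taking cohomology, and hence $\cE_{r+1}^{p,q}=H(\cE_r^{p,q},d_r)\cong\sfD(H(E^r_{p+1,q-1},d^r))=\sfD(E^{r+1}_{p+1,q-1})$.

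Once the inductive identity is established, I would invoke the convergence of $E^r$ (established in Step 3 via the bounded-below, exhaustive filtration $\{F_p C_+^{\bullet}(\lam)^*\}$) to pass to the limit $r\to\infty$. Because each $(E^r_{p,q})_d$ is finite-dimensional and stabilizes at a finite page depending on $d$, the same holds after dualization for $(\cE_r^{p,q})_d$. Hence $\cE_r^{p,q}$ stabilizes $D$-eigenspace by $D$-eigenspace, and its stable value equals $\sfD(E^\infty_{p+1,q-1})$. Summing over bidegrees gives the convergence $\cE^r\Rightarrow \sfD(E_\infty)$ claimed in the statement.

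The main obstacle is verifying the compatibility of differentials at each page, i.e., that the isomorphism of Proposition \ref{Pro:2008-01-12} genuinely upgrades to an isomorphism of spectral sequences. Concretely, one must check that the connecting maps defining $d_r$ (chasing a cocycle representative through the filtration $F^\bullet$) are transposed under graded duality to the connecting maps defining $d^r$ (chasing through $F_\bullet$). This is a bookkeeping exercise in homological algebra, but requires care because the two filtrations sit on opposite sides of a non-perfect pairing (the underlying spaces are infinite-dimensional). The finite-dimensionality of each $-D$-eigenspace is the essential input that makes the transpose-of-cohomology-equals-cohomology-of-transpose argument valid and rescues the convergence of $\cE_r$ despite the fact that the filtration $F^\bullet C_+^{\bullet}(\lam)$ is not bounded.
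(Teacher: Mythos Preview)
Your proposal is correct and follows exactly the approach the paper intends: the paper's proof consists of the single sentence ``follows from Proposition~\ref{Pro:2008-01-12} by the inductive argument,'' and you have simply unpacked that inductive argument in detail, including the compatibility of differentials under graded duality and the use of finite-dimensionality of each $-D$-eigenspace (established for $\cE_0$ just before \eqref{eq:2008-01-15-15-55}) to commute $\sfD$ with cohomology at each page.
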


The proof of the following assertion is the same as that of Theorem \ref{Th:KW}.
\begin{Pro}\label{Pro:2008-01-11}
One has $\cE_1^{p,q}=0$ for  $p+q\ne 0$
and there is a linear  isomorphism 
\begin{align*}
  U(\sg^f[t\inv]t\inv) \* (\cE_1^{p,-p})_{\tp}\isomap  
\cE_1^{p,-p}
\end{align*}
of the form
\begin{align}
u_{i_1}(-n_1)\dots u_{i_r}(-n_r)\*v
\mapsto
 \vecW{i_1}_{-n_1}\dots \vecW{i_r}_{-n_r}v
\label{eq:2008-01-22}
\end{align}
with  a fixed PBW basis 
$\{u_{i_1}(-n_1)\dots u_{i_r}(-n_r)\}$ of $U(\sg^f\* \C[t\inv]t\inv)$.
\end{Pro}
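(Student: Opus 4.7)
The plan is to mimic the proof of Theorem \ref{Th:KW} of Kac--Wakimoto, with the vacuum module $V^k(\sg)$ replaced by the generalized Verma module $\gVerma{\lam}$. The crux of the argument is that, on the associated graded $\cE_0$, the Whittaker contribution $\chi_+'$ is killed, so the differential reduces to the ``standard'' piece $(Q_+^{\st})_{(0)}$, and the remaining cohomology can be computed by a Koszul-type argument.

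First I would observe that, by \eqref{eq:2008-01-22-13-55} extended from $C_+^{\bullet}(\lam)_{\tp}$ to the whole complex $C_+^{\bullet}(\lam)$ via the $\bCg_+$-module structure (which preserves the filtration by \eqref{eq:2008-01-16-15-36}), the differential induced by $\affQ{+}$ on each $\cE_0^{p,\bullet}$ is exactly $(Q_+^{\st})_{(0)}$. Thus $\cE_1^{p,\bullet}$ is the cohomology of $(\cE_0^{p,\bullet},\,(Q_+^{\st})_{(0)})$, i.e.\ of a ``standard'' BRST complex with no Whittaker twist.

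Next I would describe $\cE_0^{p,\bullet}$ explicitly via a PBW-type monomial basis. It is spanned by vectors
\[
\wJ_{a_1}(m_1)\cdots \wJ_{a_r}(m_r)\,\psi_{\beta_1}(n_1)\cdots \psi_{\beta_s}(n_s)\,c
\]
with $c \in (\cE_0^{p,\bullet})_{\tp}$, $a_i \in \sI \sqcup \sroots_{\leq 0}$, $\beta_i \in \sroots_{<0}$, $m_i,n_i < 0$. Using the exact sequence \eqref{eq:exact-gf} at each negative Laurent level, one splits the $\wJ_{a}(m)$'s into those coming from $\sg^f[t^{-1}]t^{-1}$ and those paired, through the leading term of $(Q_+^{\st})_{(0)}$ and the commutation relations \eqref{eq:commutation-relation-wJ}--\eqref{eq:commutation-relation-wJ-2}, with the $\psi$-generators. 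This is precisely the Koszul mechanism used in the proof of Theorem \ref{Th:KW}, and the present situation is its module-theoretic version with coefficient $\gVerma{\lam}$.

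Taking cohomology then yields both the vanishing $\cE_1^{p,q}=0$ for $p+q\ne 0$ and a linear isomorphism
\[
U(\sg^f[t^{-1}]t^{-1}) \otimes (\cE_1^{p,-p})_{\tp} \isomap \cE_1^{p,-p}.
\]
The explicit form \eqref{eq:2008-01-22} of this isomorphism follows from Theorem \ref{Th:KW}(ii): the symbol of $\vecW{j}$ in $\gr^F \Wg \cong V^{\phi_k}(\sg^f)$ is $u_j(-1)\1$, so the mode operator $\vecW{j}_{-n}$ acts on the associated graded as $u_j(-n)$. The main obstacle is the careful bookkeeping of the Koszul contraction in the presence of the coefficient module $\gVerma{\lam}$; however, since this contraction depends only on the commutation relations \eqref{eq:commutation-relation-wJ}--\eqref{eq:commutation-relation-wJ-2}, which are insensitive to the choice of $\bCg_+$-module, the Kac--Wakimoto argument extends to the present setting essentially verbatim.
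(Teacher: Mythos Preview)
Your proposal is correct and takes essentially the same approach as the paper: the paper's entire proof is the single sentence ``The proof of the following assertion is the same as that of Theorem~\ref{Th:KW},'' and your outline is precisely a module-theoretic adaptation of that Kac--Wakimoto argument, with the filtration killing $\chi_+'$ on the associated graded and the remaining $(Q_+^{\st})_{(0)}$-cohomology computed by the Koszul/PBW mechanism.
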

Thanks to Proposition~\ref{Pro:2008-01-11}
the following assertion follows by induction.
\begin{Pro}\label{Pro:E_1}
 There exist isomorphisms of chain complexes
\begin{align*}
( \cE_r^ {p,q},d_r)\cong (U(\sg^f[t\inv]t\inv)\* (\cE_r^{p,q})_{\tp},1\* d^r)
\end{align*}
of the form
\eqref{eq:2008-01-22}
with $v\in (\cE_{r}^{p,q})_{\tp }$
for all $r\geq 1$.
Therefore one has the linear isomorphism
\begin{align*}
 \cE_{\infty}^{p.q}\cong U(\sg^f[t\inv]t\inv)
\* (\cE_{\infty}^{p,q})_{\tp }
\end{align*}
of the form
\eqref{eq:2008-01-22}
with $v\in (\cE_{\infty}^{p,q})_{\tp }$.
\end{Pro}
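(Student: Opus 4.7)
The plan is to argue by induction on $r\geq 1$. The base case $r=1$ is furnished directly by Proposition~\ref{Pro:2008-01-11}, which provides the desired PBW-form linear isomorphism $U(\sg^f[t\inv]t\inv)\otimes (\cE_1^{p,-p})_{\tp}\iso \cE_1^{p,-p}$ together with the vanishing $\cE_1^{p,q}=0$ for $p+q\neq 0$.

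For the inductive step, suppose that at page $r$ we have a chain-complex isomorphism
\[
(\cE_r^{p,q},d_r)\cong \bigl(U(\sg^f[t\inv]t\inv)\otimes (\cE_r^{p,q})_{\tp},\; 1\otimes d_r|_{\tp}\bigr)
\]
of the form \eqref{eq:2008-01-22}. Since $U(\sg^f[t\inv]t\inv)$ is a free $\C$-module and the differentials preserve the $-D$-eigenspace decomposition, taking cohomology commutes with this tensor product:
\[
\cE_{r+1}^{p,q}=H^{p+q}(\cE_r^{p,\bullet})\cong U(\sg^f[t\inv]t\inv)\otimes H^{p+q}((\cE_r^{p,\bullet})_{\tp})=U(\sg^f[t\inv]t\inv)\otimes (\cE_{r+1}^{p,q})_{\tp},
\]
where the last equality uses that the ``top'' (minimal $-D$-eigenspace) operation commutes with cohomology of a $D$-graded complex. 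Transport of structure shows that the isomorphism at page $r+1$ is again of the form \eqref{eq:2008-01-22} and intertwines $d_{r+1}$ with $1\otimes d_{r+1}|_{\tp}$, closing the induction. The conclusion for $\cE_{\infty}^{p,q}$ then follows because each $-D$-eigenspace of $\cE_r^{p,q}$ is finite-dimensional, so on each eigenspace the spectral sequence stabilizes at a finite page and the isomorphism passes to $\cE_{\infty}$.

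The main obstacle is maintaining the compatibility $d_r=1\otimes d_r|_{\tp}$ throughout the induction: one must check that the modes $\vecW{i}_{-n}$ (with $n\geq 1$) implementing the free $U(\sg^f[t\inv]t\inv)$-action really commute with the induced differentials on each page. This rests on two inputs already recorded in the text: (i) the filtration $\{F^p C_+^{\bullet}(\lam)\}$ is compatible with both the $\mathcal{C}_+^{\bullet}$-module structure (by \eqref{eq:2008-01-16-15-36}) and the differential $\affQ{+}$; (ii) at the associated graded level one has $\gr^F\Wg\cong V^{\phi_k}(\sg^f)$ by Theorem~\ref{Th:KW}(ii), so the images of the modes $\vecW{i}_{-n}$ reduce to the PBW modes $u_i(-n)$ of the free generators of $V^{\phi_k}(\sg^f)$, which act freely and commute with the induced differentials on every page. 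The careful verification is exactly parallel to the argument carried out in \S 8 of \cite{Ara07}, and is here merely a repetition.
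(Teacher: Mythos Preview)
Your proposal is correct and follows essentially the same approach as the paper, which also proceeds by induction on $r$ from Proposition~\ref{Pro:2008-01-11}; indeed the paper's proof is the single sentence ``Thanks to Proposition~\ref{Pro:2008-01-11} the following assertion follows by induction,'' and you have simply filled in the details of that induction. The key point you identify---that the modes $\vecW{i}_{-n}$ commute with the differential $\affQ{+}$ (since $\vecW{i}\in\Wg$ is a cocycle) and preserve the filtration by \eqref{eq:2008-01-16-15-36}, hence commute with every $d_r$---is exactly what makes the PBW-form isomorphism \eqref{eq:2008-01-22} a map of complexes on each page.
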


By (\ref{eq:E-infty-top})
and Proposition \ref{Pro:2008-01-12}
one has
\begin{align*}
 (\cE_{\infty}^{p,q})_{\tp }=\sfD((E^{\infty}_{p+1,q-1})_{\tp}
)=0
\quad \text{if }p+q\ne 0.
\end{align*}
By Proposition \ref{Pro:E_1}
this gives
$\cE_{\infty}^{p,q}=0$ if $p+q\ne 0$,
or equivalently,
\begin{align}
 E^{\infty}_{p,q}=0\quad \text{if }p+q\ne0.
\end{align}
This gives that
$\BRST_{n}(\gVerma{\lam}^*)=0$ for all $n\ne 0$.

Also,
from Proposition \ref{Pro:E_1}
if follows that
each
$\cE_{\infty}^{p,-p}$ is almost highest weight.
Therefore $E^{\infty}_{p,-p}
=\gr_p \BRST_0(\gVerma{\lam}^*)
=\sfD(E_{\infty}^{p-1,-p+1})$
is almost co-highest weight 
with $(E^{\infty}_{p,-p})_{\tp}
=(E^{\infty}_{p,-p})_{-\bra \lam,D\ket}$
(see Remark \ref{Rem:action-is-not-the-same}).
Hence
$\BRST_0(\gVerma{\lam}^*)$ is also co-highest weight.

This completes the proof of (ii) of Theorem \ref{Th:Vanishing-verma-dual}.
\qed

\bibliographystyle{alpha}
\bibliography{math}

\newcommand{\etalchar}[1]{$^{#1}$}
\begin{thebibliography}{D${}^3$HK}

\bibitem[A1]{Ara04}
T. Arakawa.
\newblock Vanishing of cohomology associated to quantized {D}rinfeld-{S}okolov
  reduction.
\newblock {\em Int. Math. Res. Not.}, (15):730--767, 2004.

\bibitem[A2]{Ara05}
T.  Arakawa.
\newblock Representation theory of superconformal algebras and the
  {K}ac-{R}oan-{W}akimoto conjecture.
\newblock {\em Duke Math. J.}, 130(3):435--478, 2005.

\bibitem[A3]{Ara07}
T.  Arakawa.
\newblock Representation theory of {$W$}-algebras.
\newblock {\em Invent. Math.}, 169(2):219--320, 2007.

\bibitem[BG]{BruGoo07}
Jonathan Brundan and Simon~M. Goodwin.
\newblock Good grading polytopes.
\newblock {\em Proc. Lond. Math. Soc. (3)}, 94(1):155--180, 2007.

\bibitem[BGK]{BruGooKle08}
Jonathan Brundan, Simon~M. Goodwin, and Alexander Kleshchev.
\newblock Highest wieght theory for finite {$W$}-algberas.
\newblock {\em preprint}, 2008.
\newblock arXiv:0801.1337v1[math.RT].

\bibitem[BaK]{BakKac04}
Bojko Bakalov and Victor~G. Kac.
\newblock Twisted modules over lattice vertex algebras.
\newblock In {\em Lie theory and its applications in physics V}, pages 3--26.
  World Sci. Publ., River Edge, NJ, 2004.

\bibitem[BK]{BruKle05}
Jonathan Brundan and Alexander Kleshchev.
\newblock Representations of shifted {Y}angians and finite {$W$}-algebras.
\newblock {\em preprint}, 2005.
\newblock math.RT/0508003, to appear in Mem. Amer. Math. Soc.

\bibitem[dBT1]{BoeTji93}
Jan de~Boer and Tjark Tjin.
\newblock Quantization and representation theory of finite {$W$} algebras.
\newblock {\em Comm. Math. Phys.}, 158(3):485--516, 1993.

\bibitem[dBT2]{BoeTji94}
Jan de~Boer and Tjark Tjin.
\newblock The relation between quantum {$W$} algebras and {L}ie algebras.
\newblock {\em Comm. Math. Phys.}, 160(2):317--332, 1994.

\bibitem[D${}^3$HK]{DAnDe-De-07}
A.~D'Andrea, C.~De~Concini, A.~De~Sole, R.~Heluani, and V.~Kac.
\newblock Threee equivalent definitions of finite {W}-algebras.
\newblock {\em appendix to \cite{De-Kac06}}, 2006.

\bibitem[DSK]{De-Kac06}
Alberto De~Sole and Victor~G. Kac.
\newblock Finite vs affine {$W$}-algebras.
\newblock {\em Japan. J. Math.}, 1(1):137--261, 2006.

\bibitem[EK]{ElaKac05}
A.~G. Elashvili and V.~G. Kac.
\newblock Classification of good gradings of simple {L}ie algebras.
\newblock In {\em Lie groups and invariant theory}, volume 213 of {\em Amer.
  Math. Soc. Transl. Ser. 2}, pages 85--104. Amer. Math. Soc., Providence, RI,
  2005.

\bibitem[FBZ]{FreBen04}
Edward Frenkel and David Ben-Zvi.
\newblock {\em Vertex algebras and algebraic curves}, volume~88 of {\em
  Mathematical Surveys and Monographs}.
\newblock American Mathematical Society, Providence, RI, second edition, 2004.

\bibitem[F]{Feu84}
B.~L. Fe{\u\i}gin.
\newblock Semi-infinite homology of {L}ie, {K}ac-{M}oody and {V}irasoro
  algebras.
\newblock {\em Uspekhi Mat. Nauk}, 39(2(236)):195--196, 1984.

\bibitem[FF]{FF90}
Boris Feigin and Edward Frenkel.
\newblock Quantization of the {D}rinfel\cprime d-{S}okolov reduction.
\newblock {\em Phys. Lett. B}, 246(1-2):75--81, 1990.

\bibitem[FKW]{FKW92}
Edward Frenkel, Victor Kac, and Minoru Wakimoto.
\newblock Characters and fusion rules for {$W$}-algebras via quantized
  {D}rinfel\cprime d-{S}okolov reduction.
\newblock {\em Comm. Math. Phys.}, 147(2):295--328, 1992.

\bibitem[K1]{Kac90}
Victor~G. Kac.
\newblock {\em Infinite-dimensional {L}ie algebras}.
\newblock Cambridge University Press, Cambridge, third edition, 1990.

\bibitem[K2]{Kac98}
Victor Kac.
\newblock {\em Vertex algebras for beginners}, volume~10 of {\em University
  Lecture Series}.
\newblock American Mathematical Society, Providence, RI, second edition, 1998.

\bibitem[KRW]{KacRoaWak03}
Victor Kac, Shi-Shyr Roan, and Minoru Wakimoto.
\newblock Quantum reduction for affine superalgebras.
\newblock {\em Comm. Math. Phys.}, 241(2-3):307--342, 2003.

\bibitem[KS]{KosSte87}
Bertram Kostant and Shlomo Sternberg.
\newblock Symplectic reduction, {BRS} cohomology, and infinite-dimensional
  {C}lifford algebras.
\newblock {\em Ann. Physics}, 176(1):49--113, 1987.

\bibitem[KT]{KasTan00}
Masaki Kashiwara and Toshiyuki Tanisaki.
\newblock Characters of irreducible modules with non-critical highest weights
  over affine {L}ie algebras.
\newblock In {\em Representations and quantizations (Shanghai, 1998)}, pages
  275--296. China High. Educ. Press, Beijing, 2000.

\bibitem[KW1]{KacWak88}
Victor~G. Kac and Minoru Wakimoto.
\newblock Modular invariant representations of infinite-dimensional {L}ie
  algebras and superalgebras.
\newblock {\em Proc. Nat. Acad. Sci. U.S.A.}, 85(14):4956--4960, 1988.

\bibitem[KW2]{KacWak89}
V.~G. Kac and M.~Wakimoto.
\newblock Classification of modular invariant representations of affine
  algebras.
\newblock In {\em Infinite-dimensional Lie algebras and groups
  (Luminy-Marseille, 1988)}, volume~7 of {\em Adv. Ser. Math. Phys.}, pages
  138--177. World Sci. Publ., Teaneck, NJ, 1989.

\bibitem[KW3]{KacWak04}
Victor~G. Kac and Minoru Wakimoto.
\newblock Quantum reduction and representation theory of superconformal
  algebras.
\newblock {\em Adv. Math.}, 185(2):400--458, 2004.

\bibitem[KW4]{KacWak05}
Victor~G. Kac and Minoru Wakimoto.
\newblock Corrigendum to: ``{Q}uantum reduction and representation theory of
  superconformal algebras'' [{A}dv. {M}ath. {\bf 185} (2004), no. 2, 400--458;
  mr2060475].
\newblock {\em Adv. Math.}, 193(2):453--455, 2005.

\bibitem[KW5]{KacWak}
Victor~G. Kac and Minoru Wakimoto.
\newblock On rationality of {$W$}-algebras.
\newblock {\em preprint}, 2007.
\newblock arXiv:0711.2296v1[math-ph].

\bibitem[Li]{Li96-2}
Hai-Sheng Li.
\newblock Local systems of twisted vertex operators, vertex operator
  superalgebras and twisted modules.
\newblock In {\em Moonshine, the Monster, and related topics (South Hadley, MA,
  1994)}, volume 193 of {\em Contemp. Math.}, pages 203--236. Amer. Math. Soc.,
  Providence, RI, 1996.

\bibitem[Los]{Los07}
Ivan~V. Losev.
\newblock Quantized symplectic actions and {W}-algebras.
\newblock {\em preprint}, 2007.
\newblock arXiv:0707.3108v2[math.RT].

\bibitem[Lyn]{Lyn79}
T.~E. Lynch.
\newblock {\em Generalized Whittaker vectors and representation theory}.
\newblock PhD thesis, M.I.T., 1979.

\bibitem[M]{Mat90}
Hisayosi Matumoto.
\newblock Whittaker modules associated with highest weight modules.
\newblock {\em Duke Math. J.}, 60(1):59--113, 1990.

\bibitem[P1]{Pre02}
Alexander Premet.
\newblock Special transverse slices and their enveloping algebras.
\newblock {\em Adv. Math.}, 170(1):1--55, 2002.
\newblock With an appendix by Serge Skryabin.

\bibitem[P2]{Pre06}
Alexander Premet.
\newblock Primitive ideals, non-restricted representations and finite
  {W}-algebras.
\newblock {\em preprint}, 2006.
\newblock arXiv:math/0612465v2[math.RT].

\bibitem[P3]{Pre07}
Alexander Premet.
\newblock Enveloping algebras of {S}lodowy slices and the {J}oseph ideal.
\newblock {\em J. Eur. Math. Soc.}, 9(3):487--543, 2007.

\bibitem[S]{Soe98}
Wolfgang Soergel.
\newblock Character formulas for tilting modules over {K}ac-{M}oody algebras.
\newblock {\em Represent. Theory}, 2:432--448 (electronic), 1998.

\bibitem[Z]{Zhu96}
Yongchang Zhu.
\newblock Modular invariance of characters of vertex operator algebras.
\newblock {\em J. Amer. Math. Soc.}, 9(1):237--302, 1996.

\end{thebibliography}




\end{document}